\newcommand{\comments}[1]{}
\let\counterwithin\relax  %DSA: i had to include this to be able to compile
\definecolor{dark-gray}{gray}{0.3}
\definecolor{dkgray}{rgb}{.4,.4,.4}
\definecolor{dkblue}{rgb}{0,0,.5}
\definecolor{medblue}{rgb}{0,0,.75}
\definecolor{rust}{rgb}{0.5,0.1,0.1}
\newtheoremstyle{myThm} % name
    {\topsep}                    % Space above
    {\topsep}                    % Space below
    {\itshape}                   % Body font
    {}                           % Indent amount
    {\sffamily\bfseries}                   % Theorem head font
    {.}                          % Punctuation after theorem head
    {.5em}                       % Space after theorem head
    {}  % Theorem head spec (can be left empty, meaning ‘normal’)
\newtheoremstyle{myRem} % name
    {\topsep}                    % Space above
    {\topsep}                    % Space below
    {}                   % Body font
    {}                           % Indent amount
    {\sffamily}                   % Theorem head font
    {.}                          % Punctuation after theorem head
    {.5em}                       % Space after theorem head
    {}  % Theorem head spec (can be left empty, meaning ‘normal’)
\newtheoremstyle{myDef} % name
    {\topsep}                    % Space above
    {\topsep}                    % Space below
    {}                   % Body font
    {}                           % Indent amount
    {\sffamily\bfseries}                   % Theorem head font
    {.}                          % Punctuation after theorem head
    {.5em}                       % Space after theorem head
    {}  % Theorem head spec (can be left empty, meaning ‘normal’)
\theoremstyle{myThm}
\newtheorem{theorem}{Theorem}[section]
\newtheorem{lemma}[theorem]{Lemma}
\newtheorem{assumption}[theorem]{Assumption}
\theoremstyle{myRem}
 \newenvironment{remark}
  {\pushQED{\qed}\remarkx}
  {\popQED\endremarkx}
\theoremstyle{myDef}
\newtheorem{definition}[theorem]{Definition}
\let\originalleft\left
\let\originalright\right
\renewcommand{\left}{\mathopen{}\mathclose\bgroup\originalleft}
\renewcommand{\right}{\aftergroup\egroup\originalright}
\definecolor{mygreen}{rgb}{0.1,0.75,0.2}
\providecommand{\mathbbm}{\mathbb} % In case we don't load bbm
\newcommand{\R}{\mathbbm{R}}
\newcommand{\N}{\mathbbm{N}}
\newcommand{\G}{\mathcal{G}}
\renewcommand{\phi}{\varphi}
\newcommand{\tildeu}{\tilde{u}}
\newcommand{\tildev}{\tilde{v}}
\newcommand{\sfd}{\mathsf{d}}
\newcommand{\tcov}{\mathsf{Cov}}
\newcommand{\tdiam}{\Delta}
\newcommand{\tvol}{\mathsf{vol}}
\newcommand{\ttrace}{\mathsf{tr}}
\newcommand{\indicator}{{\bf{1}}}
\newcommand{\iid}{\stackrel{\mathsf{i.i.d.}}{\sim}}
\newcommand{\E}{\mathbb{E}}
\renewcommand{\P}{\mathbb{P}}
\newcommand{\tildeT}{\widetilde{T}}
\newcommand{\tildek}{\tilde{k}}
\newcommand{\tildeh}{\widetilde{h}}
\newcommand{\tildeH}{\widetilde{H}}
\newcommand{\hatSigma}{\widehat{\Sigma}}
\newcommand{\bfu}{\mathbf{u}}
\newcommand{\mfs}{\mathfrak{s}}
\newcommand{\hatk}{\hat{k}}
\newcommand{\hatt}{\hat{t}}
\newcommand{\hatC}{\widehat{C}}
\newcommand{\hatT}{\hat{T}}
\newcommand{\hatV}{\hat{V}}
\newcommand{\hattheta}{\hat{\theta}}
\newcommand{\mcD}{\mathcal{D}}
\newcommand{\mcE}{\mathcal{E}}
\newcommand{\mcF}{\mathcal{F}}
\newcommand{\mcG}{\mathcal{G}}
\newcommand{\mcH}{\mathcal{H}}
\newcommand{\mcK}{\mathcal{K}}
\newcommand{\mcL}{\mathcal{L}}
\newcommand{\mcS}{\mathcal{S}}
\newcommand{\mcU}{\mathcal{U}}
\newcommand{\inparen}[1]{\left(#1\right)}             %\inparen{x+y}  is (x+y)
\newcommand{\inbraces}[1]{\left\{#1\right\}}           %\inbrace{x+y}  is {x+y}
\newcommand{\insquare}[1]{\left[#1\right]}             %\insquare{x+y}  is [x+y]
\newcommand{\normn}[1]{\ensuremath{\lVert #1 \rVert}}
\newcommand{\abs}[1]{\ensuremath{\left\lvert #1 \right\rvert}}
\newcommand{\inp}[1]{\ensuremath{\left\langle #1 \right\rangle}}
\newcommand{\tvar}{\text{var}}
\definecolor{mygreen}{rgb}{0.1,0.75,0.2}
\title{Covariance Operator Estimation via Adaptive Thresholding} 
\author{Omar Al-Ghattas and Daniel Sanz-Alonso}
\date{University of Chicago}
\makeatletter\@addtoreset{section}{part}\makeatother%
\numberwithin{equation}{section}
\newcommand{\upperRomannumeral}[1]{\uppercase\expandafter{\romannumeral#1}}
\renewcommand{\hat}{\widehat}
\begin{document}
\maketitle %  LEAVE HERE
% The command above causes the title to be displayed.

%>>>>> DELETE ALL CONTENT UNTIL "\end{document}"
% This is the body of your document.

% REQUIRED

\abstract{
This paper studies sparse covariance operator estimation for nonstationary processes with sharply varying marginal variance and small correlation lengthscale. We introduce a covariance operator estimator that adaptively thresholds the sample covariance function using an estimate of the variance component. Building on recent results from empirical process theory, we derive an operator norm bound on the estimation error in terms of the sparsity level of the covariance and the expected supremum of a normalized process. Our theory and numerical simulations demonstrate the advantage of adaptive threshold estimators over universal threshold and sample covariance estimators in nonstationary settings.

% REQUIRED
%\begin{keywords}
% ensemble Kalman methods, sparsity-promoting regularization, iterative alternating scheme, hierarchical inverse problems
%\end{keywords}

% REQUIRED
%\begin{AMS}
%  	68Q25, 35Q62, 62F15
%\end{AMS}
\section{Introduction}
This paper investigates  sparse covariance operator estimation in an infinite-dimensional function space setting.
 Covariance estimation is a fundamental task that arises in numerous scientific applications and data-driven algorithms \cite{anderson1958introduction, fan2008high, hardoon2004canonical, tharwat2017linear, ghattas2022non, al2024ensemble}. The sample covariance is arguably the most natural estimator, and its error in both finite and infinite dimension can be controlled by a notion of  \emph{effective dimension} that accounts  for spectrum decay \cite{koltchinskii2017concentration,lounici2014high}. 
 However, a rich literature has identified sparsity assumptions under which other estimators drastically outperform the sample covariance in finite high-dimensional settings \cite{bickel2008covariance, bickel2008regularized, el2008operator, cai2012optimal, cai2016estimating, wainwright2019high}. This work contributes to the largely unexplored subject of \emph{sparse} covariance operator estimation in infinite dimension.
  Through rigorous theory and complementary numerical simulations, we demonstrate the benefit of adaptively thresholding the sample covariance.  
 In doing so, this paper contributes to the emerging literature on 
 operator estimation and learning \cite{kovachki2024operator, de2023convergence, mollenhauer2022learning}, emphasizing the importance of exploiting structural assumptions in the design and analysis of estimators. 

 In this work, we investigate approximate sparsity structure that arises in the nonstationary regime where the marginal variance varies widely in the domain and the correlation lengthscale is small relative to the size of the domain. Covariance estimation for such nonstationary processes is crucial, for instance, in numerical weather forecasting, where local and highly nonstationary phenomena such as cloud formation can significantly impact global forecasts. }
To study the sparse highly nonstationary regime where the marginal variance varies widely in the domain and the correlation lengthscale is small, we consider a novel class of covariance operators that satisfy a \emph{weighted} $L_q$-sparsity condition. 
For covariance operators in this class, we establish a bound on the operator norm error of the \textit{adaptive threshold} estimator in terms of two dimension-free quantities: the sparsity level and the expected supremum of  a normalized process. Unlike existing theory that considered unweighted $L_q$-sparsity (see Section \ref{sec:LitReviewInfiniteDims} for a review) our theory allows for covariance models with unbounded marginal variance functions. We then compare our adaptive threshold estimator with other estimators of interest, namely the \textit{universal threshold} and \textit{sample covariance} estimators. 
For universal thresholding, we prove a lower bound that is larger than our upper bound for adaptive thresholding. 
In addition, we numerically investigate adaptive thresholding for highly nonstationary covariance models defined through a scalar parameter that controls both the correlation lengthscale and the range of the marginal variance function. In the challenging case where the lengthscale is small and the range of marginal variances is large, we show an exponential improvement in sample complexity of the adaptive threshold estimator compared to the sample covariance. Our numerical simulations clearly demonstrate that universal threshold and sample covariance estimators fail in this regime. 

  By focusing on the infinite-dimensional setting, our theory reveals the key dimension-free quantities that control the estimation error, and 
 further explains how the correlation lengthscale and the marginal variance function affect the estimation problem. While our infinite-dimensional analysis helps uncover such a connection between interpretable model assumptions and complexity of the estimation task, it poses new challenges that require novel technical tools. In this work, we leverage recent results from empirical process theory \cite{al2025sharp}
 to control the error in the estimation of the variance component used to adaptively choose the thresholding radius. 
 Our infinite-dimensional perspective agrees with recent work in operator learning that advocates for the development of theory in infinite dimension, as opposed to the traditional approach in functional data analysis, where it is common to 
 study estimators constructed by first discretizing the data \cite{ramsay2002functional, zhang2016sparse}. We will discuss the differences between the two approaches and compare our theory with the existing infinite-dimensional covariance estimation literature \cite{al2023covariance, fang2023adaptive}. More broadly, infinite-dimensional analyses that delay introducing discretization have led to numerous theoretical insights and computational advances in mathematical statistics \cite{gine2021mathematical}, Bayesian inverse problems \cite{stuart2010inverse}, Markov chain Monte Carlo \cite{cotter2013mcmc}, importance sampling and particle filters \cite{agapiou2017importance}, ensemble Kalman algorithms \cite{sanz2023analysis}, graph-based learning \cite{garcia2018continuum}, stochastic gradient descent \cite{latz2021analysis}, and numerical analysis and control \cite{zuazua2005propagation}, among many others.

\subsection{Related Work} \label{ssec:RelatedWork}
For later reference and discussion, here we summarize unweighted and weighted approximate sparsity assumptions in the finite-dimensional thresholded covariance estimation literature, as well as the main sparsity assumptions that have been considered in the infinite-dimensional setting. 
\subsubsection{Finite Dimension}\label{sec:LitReviewFiniteDims}
Thresholding estimators in the finite high-dimensional setting were introduced in the seminal work \cite{bickel2008covariance} and further studied in \cite{rothman2009generalized, cai2012optimal, cai2016estimating, cai2011adaptive}. Given $d_X$-dimensional i.i.d. samples $X_1,\dots, X_N$ from a centered sub-Gaussian  distribution with covariance $\Sigma$, the authors demonstrated that thresholding the sample covariance matrix, i.e. $\hatSigma^{\mathsf{U}}_{\rho_N}= (\hatSigma_{ij} \indicator \{|\hatSigma_{ij}| \ge \rho_N\})$ --- where the superscript $\mathsf{U}$ denotes \textit{universal} --- performed well over the class of covariance matrices with bounded marginal variances and satisfying an $\ell_q$-sparsity condition whenever the thresholding parameter $\rho_N$ is chosen appropriately. Specifically, whenever $\Sigma$ belongs to the class $\mcU_q(R_q, M)$ for $q\in [0,1)$, $R_q >0$ and $M > 0$ where
\begin{align}\label{eq:SparseMatrixClass}
    \mcU_q := 
    \mcU_q(R_q, M) = \inbraces{\Sigma \in \R^{d_X\times d_X}: \Sigma\succ 0,~ \max_{i \le d_X} \Sigma_{ii} \le M, ~ \max_{i \le d_X} \sum_{j=1}^{d_X} |\Sigma_{ij}|^q \le R_q^q},
\end{align}
then the operator norm error of universal thresholding estimators is bounded above (up to universal constants) by $R_q^q (M \log d_X / N)^{(1-q)/2}.$ The bounded marginal variance assumption is crucial to this theory as it was shown that $\rho_N$ must scale with $M$ in order for the high probability guarantees on $\hatSigma^{\mathsf{U}}_{\rho_N}$ to hold. In \cite{cai2011adaptive}, the authors argued that such a bounded variance assumption effectively converted a heteroscedastic problem of covariance estimation into a worst-case homoscedastic one in which $\Sigma_{ii} = M$ for all $i$ for the purposes of choosing a universal thresholding radius. This is problematic whenever (i) no natural upper bound on the marginal variances is known and (ii) the marginal variances vary over a large range.   
They instead considered the adaptively thresholded covariance estimator $\hatSigma^A_{\rho_N}= (\hatSigma_{ij} \indicator \{|\hatSigma_{ij}| \ge \rho_N \hatV_{ij}^{1/2} \})$, where $\hatV_{ij}$ is the sample version 
of the variance component $V_{ij}:= \tvar(X_i X_j)$. This was shown to be optimal over the larger weighted $\ell_q$-sparsity covariance matrix class
\begin{align}\label{eq:SparseWeightedMatrixClass}
    \mcU^*_q:=
    \mcU^*_q(R_q) = \inbraces{\Sigma \in \R^{d_X \times d_X}: 
    \Sigma\succ 0,~ \max_{i \le d_X} \sum_{j=1}^{d_X} (\Sigma_{ii} \Sigma_{jj})^{\frac{1-q}{2}}
    |\Sigma_{ij}|^q \le R_q^q},
\end{align}
 with operator norm error bounded above (up to universal constants) by $R_q^q (\log d_X / N)^{(1-q)/2}.$  It was further shown that universal thresholding was sub-optimal over the same class. Minimax lower bounds proving the optimality of universal thresholding were studied in \cite{cai2012optimal}. Distributional assumptions were significantly relaxed by allowing for dependence  in \cite{chen2013covariance}, which analyzed thresholding in the high-dimensional time series setting.

\subsubsection{Infinite Dimension}\label{sec:LitReviewInfiniteDims}
In the infinite-dimensional setting, the covariance (operator) estimation problem under sparsity-type constraints has received far less attention. In \cite{al2023covariance}, the authors consider i.i.d. draws of an infinite-dimensional Gaussian process defined over $D=[0,1]^d$ with covariance operator $C$ and corresponding covariance function $k: D \times D \to \R$, denoted $u_1,\dots, u_N \iid \text{GP}(0, C)$. They generalize \cite{bickel2008regularized} to infinite dimensions by considering Gaussian processes that are almost surely continuous with covariance operators $C \in \mcK_q,$ where 
\begin{align}\label{eq:SparseOperatorClass}
    \mcK_q := \mcK_q(R_q, M) = \inbraces{C \succ 0: ~ \sup_{x \in D} k(x,x) \le M, ~ \sup_{x \in D} \int_D |k(x,y)|^q dy \le R_q^q}.
\end{align}
This class naturally captures approximate sparsity of the covariance, which may arise, for example, from  
decay of correlations of the Gaussian process at different locations in the domain. It was then shown that for a universally thresholded covariance operator estimator, i.e. with covariance function $\hatk(x,y) \indicator \{ |\hatk(x,y)| \ge \rho_N\}$, the operator norm error was bounded above by $R_q^q ((\E[\sup_{x \in D} u(x)])^2/ N)^{(1-q)/2},$ which is a dimension-free quantity. Further, the authors demonstrate that if the covariance function is stationary and depends on a lengthscale parameter $\lambda$, then universally thresholded estimators enjoy an exponential improvement over the standard sample covariance estimator in the small $\lambda$ asymptotic. Notice that here and throughout this paper, $d$ represents the dimension of the physical domain $D = [0,1]^d$ and should not be confused with the dimension of the data points $\{u_n\}_{n=1}^N$, which here represent infinite-dimensional functions.

Motivated by applications in functional data analysis, \cite{fang2023adaptive} considers covariance estimation for a multi-valued process $\bfu: D \to \R^p$ given independent data 
\begin{align*}
    \bfu_n(\cdot) = \bigl(u_{n1}(\cdot), u_{n2}(\cdot), \dots, u_{n p}(\cdot)\bigr)^\top,
    \quad n=1,\dots,N.
\end{align*}
The covariance function now takes the form
\begin{align*}
    \mathbf{K}: D \times D \to \R^{p \times p}, 
    \qquad 
    \mathbf{K}(x, y) = \tcov \bigl(\bfu_n(x), \bfu_n(y)\bigr)= [k_{ij}(x,y)]_{i,j=1}^p,
\end{align*}
where $k_{ij}: D\times D \to \R $ is a component covariance function. 
Then, \cite{fang2023adaptive} studies the 
setting 
 in which the number of component is much larger than the sample size, i.e. 
$p \gg N$, and under the assumption that the true covariance function belongs to the class
\begin{align}\label{eq:SparseMatrixOperatorClassFang}
    \begin{split}
    \mcG_q(R_q, \varepsilon) = 
    \bigg \{
    &\mathbf{K} \succeq 0: 
     \max_{i \le p} \sum_{j=1}^p (\normn{k_{ii}}_\infty\normn{k_{jj}}_\infty)^{\frac{1-q}{2}} \normn{ k_{ij} }^q_{\text{HS}} \le R_q^q,
     \max_{i \le p} \normn{k_{ii}^{-1}}_\infty \normn{k_{ii}}_\infty \le \frac{1}{\varepsilon}
    \bigg \}.
    \end{split}
\end{align}
Here, we denote by $\normn{k}^2_{\text{HS}} = \iint k^2( x,  y ) \,  dx dy$ the Hilbert-Schmidt norm, and we denote $\normn{k}_{\infty} = \sup_{x,y \in D}|k(x,y)|.$ This class generalizes the class $\mcU^*_q$ and the authors obtain analogous upper bounds to those of \cite{cai2011adaptive} for the error of estimation under a functional version of the matrix $\ell_1$-norm. We provide further comparisons to this line of work in Remark~\ref{rem:globalSparsityAssumption}. Another popular approach in the functional data analysis literature is the partial observations framework, see \cite{yao2005functional, zhang2016sparse} and \cite[Section 4]{fang2023adaptive}. In this setting, observations are comprised of noisy evaluations of the infinite dimensional response function at a set of grid-points located randomly in the domain $D$. At a high level, much of this literature involves the study of nonparametric estimators (e.g. local polynomials) to recover estimates of the functions underlying the partial observations. 
These estimates are then used as inputs to the sample covariance estimator. Under general smoothness assumptions, which are necessary to control the bias of the nonparametric estimator, it can be shown that covariance estimators that use these estimated functions are asymptotically equivalent to  covariance estimators that use fully observed functional data. We discuss this approach further in Remark~\ref{rem:PartialObs}.

\subsection{Outline}
Section~\ref{sec:mainresults} contains the main results of this paper: Theorem \ref{thm:ThresholdOpNormBoundwSampleRho} shows an operator norm bound for adaptive threshold estimators, Theorem \ref{thm:UniversalThreshLowerBound} states a lower bound for universal thresholding, and Theorem
\ref{thm:CompareAdaptiveAndSampleCov} compares the sample covariance and adaptive threshold estimators. In addition, Section~\ref{sec:mainresults} also includes numerical simulations in physical dimension $d=1;$ similar results in dimension $d=2$ are deferred to an appendix.
The proof of Theorem \ref{thm:ThresholdOpNormBoundwSampleRho} can be found in  Section~\ref{sec:ErrorAnalysisAdaptiveEstimator}, and uses a recent result on empirical process theory discussed in Section~\ref{sec:ProdEmpiricalProcess}. Sections \ref{sec:LowerBound} and \ref{sec:NonstationaryWeightedCovAnalysis} contain the proofs of Theorems
\ref{thm:UniversalThreshLowerBound} and \ref{thm:CompareAdaptiveAndSampleCov}, respectively. The paper closes in Section~\ref{sec:Conclusions} with concluding remarks and suggestions for future work.

\paragraph{Notation}\label{sec:Notation}
Given two positive sequences $\{a_n\}$ and $\{b_n\}$, the relation $a_n \lesssim b_n$ denotes that $a_n \le c b_n$ for some constant $c>0$. If both $a_n \lesssim b_n$ and $b_n \lesssim a_n$ hold simultaneously, we write $a_n \asymp b_n.$ For an operator $\mcL$, we denote its operator norm by $\| \mcL \|$ and its trace by $\ttrace(\mcL).$ For a matrix $\Sigma \in \R^{p \times p}$ (resp. operator $C: L_2(D) \to L_2(D)$) we write $\Sigma \succ 0$ (resp. $C \succ 0$) to denote that $\Sigma$ (resp. $C$) is positive definite. 

\section{Main Results} \label{sec:mainresults}
This section introduces our framework, assumptions, and main results. In Section~\ref{ssec:SettingAndEstimators}, we discuss the data generating mechanism under consideration, and we define the various estimators that will be studied. In Section~\ref{ssec:AdaptiveThresholdBound}, we establish our main result, a high probability operator norm error bound for the adaptive threshold estimator. In Section~\ref{ssec:ComparingToOtherEstimators}, we provide both theoretical and empirical comparisons of adaptive threshold, universal threshold, and sample covariance estimators.

\subsection{Setting and Estimators}\label{ssec:SettingAndEstimators}
Let $D \subset \R^d$ and let $u_1,\dots, u_N$ be i.i.d. copies of a centered square-integrable process $u: D \to \R.$  
We are interested in estimating the covariance operator $C$ of $u$ from the data $\{u_n\}_{n=1}^N.$
Recall that the covariance function (kernel) $k:D\times D \to \R$ and covariance
operator $C: L_2(D) \to L_2(D)$ are defined by the requirement that, for any $x,y \in D$ and $\psi \in L_2(D)$, 
\begin{align*}
    k(x,y) := \E [u(x)u(y)], 
    \qquad
    (C\psi)(\cdot) := \int_D k(\cdot, y)\psi(y) \, dy.
\end{align*}
That is, $C$ is the integral operator with kernel $k.$ 
We will focus on (sub-)Gaussian data. Recall that a square-integrable process $u$ is called (sub-)Gaussian if, for any fixed $w \in L_2(D),$ the random variable $\langle u,w \rangle_{L_2(D)}$ is (sub-)Gaussian. We further recall that the process $u$ is called pre-Gaussian if there exists a centered Gaussian process, $v$, with the same covariance operator as that of $u.$ Following \cite[page 261]{ledoux2013probability}, we refer to $v$ as the Gaussian process \textit{associated to} $u$.
 
For simplicity, we take $D: = [0,1]^d$ to be the $d$-dimensional unit hypercube. In the applications that motivate this work, the ambient dimension $d$ is typically $1,2,$ or $3$, and so we treat $d$ as a constant throughout. We are interested in applications where the covariance function $k$ exhibits approximate sparsity (which may arise, for instance, due to spatial decay of correlations), and where the marginal variance function $\sigma^2(x):= k(x,x)$ has multiple scales in the domain $D.$ 
In this setting, the sample covariance function $\hatk$ and sample covariance operator $\hatC$  defined by 
\begin{align*}
    \hatk(x,y) := \frac{1}{N} \sum_{n=1}^N u_n(x) u_n(y),
    \qquad 
    (\hatC\psi)(\cdot) := \int_D \hatk(\cdot, y)\psi(y) \, dy
\end{align*}
perform poorly. To improve performance in regard to exploiting  approximate sparsity,
one can instead consider the universal threshold estimator defined by 
\begin{align*}
    \hatk_{\rho_N}^{\mathsf{U}}(x,y) := \hatk(x,y) \indicator \inbraces{|\hatk(x,y)| \ge \rho_N},
    \qquad 
    (\hatC^{\mathsf{U}}_{\rho_N}\psi)(\cdot) := \int_D \hatk^{\mathsf{U}}_{\rho_N}(\cdot, y)\psi(y)\, dy,
\end{align*}
where $\rho_N$ is a tunable thresholding parameter. However, this approach is not well suited if the marginal variance function takes a wide range of values on $D,$ where it becomes essential to consider a spatially varying thresholding parameter. To that end, we define the variance component $\theta: D \times D \to \R_{\ge 0}$ by  
\begin{align*}
    \theta(x,y) := \tvar \bigl(u(x)u(y)\bigr).
\end{align*}
To estimate the variance component, we consider the standard \emph{sample-based} estimator given by
\begin{equation*}
    \hattheta_{\mathsf{S}}(x,y) := \frac{1}{N} \sum_{n=1}^N u_n^2(x) u_n^2(y) - \hatk^2(x,y).
\end{equation*}  
In the Gaussian setting, we additionally consider the \emph{Wick's-based} estimator given by $$\hattheta_{\mathsf{W}}(x,y) := \hatk(x,x)\hatk(y,y) + \hatk^2(x,y),$$ which is motivated by the following derivation invoking Wick's theorem (also commonly referred to as Isserlis' theorem)
\begin{align*}
    \theta(x,y) 
    &= \E \bigl[u^2(x)u^2(y) \bigr]-\bigl(\E\bigl[u(x)u(y)\bigr]\bigr)^2\\
    &= \E \bigl[u^2(x)\bigr] \E\bigl[ u^2(y)\bigr] + 2\bigl(\E\bigl[u(x)u(y)\bigr] \bigr)^2 -\bigl(\E \bigl[u(x)u(y) \bigr]\bigr)^2\\
    &= k(x,x)k(y,y) + k^2(x,y).
\end{align*}
Given an estimator $\hattheta$ of $\theta$, we then define the adaptive threshold estimator
\begin{align*}
    \hatk^{\mathsf{A}}_{\rho_N} := \hatk(x,y) \indicator \inbraces{ \left|\frac{\hatk(x,y)}{\sqrt{\hattheta(x,y)}} \right| \ge \rho_N},
    \qquad 
    (
    \hatC_{\rho_N}^{\mathsf{A}}
    \psi)(\cdot) 
    := \int_D \hatk^{\mathsf{A}}_{\rho_N} (\cdot, y)\psi(y)\, dy,
\end{align*}
where we set $\mathsf{A} = \mathsf{S}$ when $\hattheta=\hattheta_{\mathsf{S}}$ and $\mathsf{A} = \mathsf{W}$ when $\hattheta=\hattheta_{\mathsf{W}}.$ We refer to this as adaptive thresholding (of the sample covariance) since the event in the indicator can be equivalently written as $\{|\hatk(x,y)| \ge \rho_N \hattheta^{1/2}(x,y)\}$, and so the level of thresholding
varies with the location $(x,y) \in D \times D.$ 
The goal of this paper is to demonstrate through rigorous theory and numerical examples the improved performance of the adaptive threshold estimator relative to the universal threshold and sample covariance estimators.

\subsection{Error Bound for Adaptive-threshold Estimator} \label{ssec:AdaptiveThresholdBound}
Our theory is developed under the following assumption: 
\begin{assumption}\label{ass:mainAssumption}
Let $u, u_1,\dots, u_N $ be i.i.d. centered sub-Gaussian and pre-Gaussian random functions on $D=[0,1]^d$ that are Lebesgue almost-everywhere continuous with probability one. It holds that:
    \begin{enumerate}[label=(\roman*)]
        \item $C \in \mcK_q^*$ where 
        \begin{align*}
             \mcK_q^* 
             :=  \mcK_q^*(R_q)
             =
             \inbraces{
             C \succ 0,
             \sup_{x \in D} \int_D \bigl(k(x,x)k(y,y) \bigr)^{\frac{1-q}{2}} |k(x,y)|^q \, dy \le R_q^q 
             }.
        \end{align*}
        \item There exists  a universal constant $\nu > 0$ such that, for any $x,y\in D,$
        $$
        \theta(x,y) \ge \nu k(x,x)k(y,y).
        $$
        \item For a sufficiently small constant $c$, the sample size satisfies 
        \begin{align*}
             \sqrt{N} \ge \frac{1}{c} \E \insquare{\sup_{x \in D} \frac{u(x)}{\sqrt{k(x,x)}}}.
        \end{align*}
    \end{enumerate}
\end{assumption}
In contrast to the  setting considered in \cite{al2023covariance}, Assumption~\ref{ass:mainAssumption} allows for sub-Gaussian data and admits covariance functions for which $\sup_{x \in D} k(x,x) \to \infty$. Furthermore, here we only require \emph{Lebesgue almost-everywhere} continuity of the data, whereas \cite{al2023covariance} requires continuous data. Assumption~\ref{ass:mainAssumption}~(i) specifies that the covariance function $k$ satisfies a weighted $L_q$-sparsity condition that generalizes the class of row-sparse matrices $\mcU_q^*(R_q)$ studied in \cite{cai2011adaptive} to our infinite-dimensional setting. Assumption~\ref{ass:mainAssumption}~(ii) ensures that consistent estimation of the variance component is possible, and is analogous to requirements in finite dimension \cite[Condition C1]{cai2011adaptive}. Assumption~\ref{ass:mainAssumption}~(iii) is imposed for purely cosmetic reasons and can be removed at the expense of a more cumbersome statement of the results and proofs that would need to account for the case in which the sample size is chosen to be insufficiently large. The sample size requirement can also be compared to \cite[Condition 4]{fang2023adaptive}, which requires that the pair $(N,p)$ satisfies $\log p /N^{1/4} \to 0$ as $N,p \to \infty$, where $p$ is the number of component random functions as described in Section~\ref{sec:LitReviewInfiniteDims}. In contrast, our assumption is nonasymptotic and stated only in terms of the dimension-free quantity  $\E [\sup_{x \in D} u(x)/\sqrt{k(x,x)}]$ as our proof techniques differ from theirs. 

\begin{remark}[Comparison to Global-type Sparse Class of \cite{fang2023adaptive}]\label{rem:globalSparsityAssumption}
    As noted in Section~\ref{sec:LitReviewInfiniteDims}, \cite{fang2023adaptive} recently studied a notion of sparse covariance functions $\mcG_q(R_q, \varepsilon)$  different to the one considered here. In particular, the class $\mcG_q(R_q, \varepsilon)$ specifies a \textit{global} notion of sparsity, in the sense that it imposes conditions on the relationship between the various coordinate covariance functions that make-up $\textbf{K}$. In contrast, the class $\mcK^*_q$ is \textit{local}, in that it imposes a sparse structure on the covariance function of a real-valued process. For example, in the single component case $p=1$, covariance functions $k_{11}$ belonging to $\mcG_q(R_q, \varepsilon)$ must satisfy $\|k_{11}\|_{\infty}^{1-q} \|k_{11}\|^q_{\text{HS}} \le R_q^q$ and $\| k_{11}^{-1}\|_{\infty} \| k_{11}\|_{\infty} \le 1/\varepsilon$. This is equivalent to requiring that the covariance function $k_{11}$ is bounded in Hilbert-Schmidt norm, and that $k_{11}$ and its inverse are bounded in supremum norm. Importantly, in contrast to the class $\mcK^*_q$ studied here, their assumption does not capture any decay of correlations of the process at two different points in the domain, nor does it permit $\sup_{x \in D} k_{11}(x,x) \to \infty.$
\end{remark}

We are now ready to state our main result, which establishes operator norm bounds for adaptive threshold estimators.
\begin{theorem}\label{thm:ThresholdOpNormBoundwSampleRho}
    Under Assumption~\ref{ass:mainAssumption}, let $v$ be the Gaussian process associated to $u$, and for a universal constant $c_0>0$, let 
     \begin{align*}
        \rho_N =  
        \frac{c_0}{{\sqrt{ N}}} \E\insquare{\sup_{x\in D} \frac{v(x)}{k^{1/2}(x,x)}}.
    \end{align*}
    Then, there exists universal positive constants $c_1, c_2$ such that, with probability at least $1-c_1e^{-N\rho_N^2}$,
    \begin{align*}
        \normn{\hatC_{\rho_N}^\mathsf{S} - C} \le c_2 R_q^q \rho_N^{1-q}.
    \end{align*}
    Moreover, if $u$ is a Gaussian process, then for 
    \begin{align*}
        \hat{\rho}_N =
        \frac{c_0}{{\sqrt{ N}}}\inparen{
        \frac{1}{N} \sum_{n=1}^N \sup_{x\in D} \frac{u_n(x)}{\hatk^{1/2}(x,x)}
        },
    \end{align*}
    there exists a universal positive constants $c_1, c_2$ such that, with probability at least $1-c_1e^{-N\rho_N^2}$,
    \begin{align*}
        \normn{\hatC_{\hat{\rho}_N}^{\mathsf{A}} - C} \le c_2 R_q^q \rho_N^{1-q},
    \end{align*}
    where $\mathsf{A} \in \{ \mathsf{S}, \mathsf{W} \}.$
\end{theorem}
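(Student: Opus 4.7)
The plan is to reduce the operator norm bound to a pointwise integral via Schur's test, then carry out a standard thresholding case-analysis on a good event governed by a normalized empirical-process concentration inequality, and finally transfer the bound to the data-driven threshold.

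\emph{Setup and good event.} Since $\hat{C}^{\mathsf{A}}_{\rho_N} - C$ is the integral operator with symmetric kernel $\hat{k}^{\mathsf{A}}_{\rho_N} - k$, Schur's test gives
\begin{align*}
\normn{\hat{C}^{\mathsf{A}}_{\rho_N} - C} \le \sup_{x \in D} \int_D \bigl|\hat{k}^{\mathsf{A}}_{\rho_N}(x,y) - k(x,y)\bigr| \, dy,
\end{align*}
so it suffices to bound the right-hand side by a constant multiple of $R_q^q \rho_N^{1-q}$ with the claimed probability. Setting $\tau(x,y) := \rho_N \theta^{1/2}(x,y)$, I would use the empirical-process theory of Section~\ref{sec:ProdEmpiricalProcess} (applied to the product process $(x,y) \mapsto u(x)u(y)$ normalized by its pointwise standard deviation $\theta^{1/2}(x,y)$) to construct an event $\mcE$ with $\P(\mcE) \ge 1 - c_1 e^{-N \rho_N^2}$ on which
\begin{align*}
\sup_{(x,y) \in D^2} \frac{|\hat{k}(x,y) - k(x,y)|}{\theta^{1/2}(x,y)} \le \tfrac{c_0}{2} \rho_N,
\qquad
\sup_{(x,y) \in D^2} \left| \frac{\hat{\theta}(x,y)}{\theta(x,y)} - 1 \right| \le \tfrac{1}{2}.
\end{align*}
The first bound is the main input; the second follows analogously --- for $\hattheta_{\mathsf{S}}$ by applying the same tools to the fourth-moment process $N^{-1}\sum_n u_n^2(x)u_n^2(y)$, and for $\hattheta_{\mathsf{W}}$ by reducing to uniform consistency of $\hatk(x,x)$ via the first bound combined with $\theta(x,x) \asymp k(x,x)^2$.

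\emph{Case analysis and sparsity.} On $\mcE$, the data-driven threshold $\hat{\tau}(x,y) := \rho_N \hat{\theta}^{1/2}(x,y)$ satisfies $\hat{\tau} \asymp \tau$. For fixed $x$, I split the integral over $y$ according to whether the entry $\hatk(x,y)$ is thresholded. If thresholded, the integrand equals $|k(x,y)|$ and the triangle inequality with $|\hatk| < \hat{\tau}$ forces $|k(x,y)| \lesssim \tau(x,y)$; if not thresholded, the integrand equals $|\hatk - k| \lesssim \tau$ and $|\hatk| \ge \hat{\tau}$ forces $|k(x,y)| \gtrsim \tau(x,y)$. In either case the integrand is bounded by a constant multiple of $|k(x,y)|^q \tau(x,y)^{1-q}$. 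Using $\theta(x,y) \lesssim k(x,x)k(y,y)$ (Wick's theorem in the Gaussian case, a sub-Gaussian moment bound combined with Cauchy--Schwarz otherwise) gives $\tau^{1-q} \lesssim \rho_N^{1-q} (k(x,x)k(y,y))^{(1-q)/2}$, and integrating with Assumption~\ref{ass:mainAssumption}(i) yields the desired bound $R_q^q \rho_N^{1-q}$ uniformly in $x$.

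\emph{Data-driven threshold and main obstacle.} For the second assertion, the Gaussian assumption gives $u_n \equald v$, so $Z_n := \sup_{x \in D} u_n(x)/\sqrt{k(x,x)}$ are i.i.d.\ with mean equal to the factor appearing in $\rho_N$. Borell--TIS concentration for Gaussian suprema together with Assumption~\ref{ass:mainAssumption}(iii) shows that $N^{-1}\sum_n Z_n$ lies within a constant factor of its mean on an event of the required probability; combined with $\hatk(x,x) \asymp k(x,x)$ (already available on $\mcE$ via $\theta(x,x) \asymp k(x,x)^2$), this yields $\hat{\rho}_N \asymp \rho_N$, and replaying the earlier argument with $\hat{\rho}_N$ in place of $\rho_N$ yields the conclusion for both $\mathsf{A} \in \{\mathsf{S}, \mathsf{W}\}$. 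The principal obstacle lies in the normalized concentration inequality: when $\sup_x k(x,x) = \infty$ the unnormalized process $\hatk - k$ is not uniformly well behaved, and one must work with $(\hatk - k)/\theta^{1/2}$. It is precisely the product-process machinery of \cite{al2025sharp}, specialized in Section~\ref{sec:ProdEmpiricalProcess}, that delivers the sharp rate $\rho_N$ after this normalization, and correctly identifies the Gaussian supremum of the normalized associated process as the driving dimension-free complexity.
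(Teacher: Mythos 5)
Your proposal follows essentially the same route as the paper's proof: the Schur-type reduction $\normn{\hatC^{\mathsf{A}}_{\rho_N}-C}\le \sup_x\int_D|\hatk^{\mathsf{A}}_{\rho_N}(x,y)-k(x,y)|\,dy$, a good event built from the normalized product-process bounds of Section~\ref{sec:ProdEmpiricalProcess} controlling $(\hatk-k)/\theta^{1/2}$ and $\hattheta/\theta$, the thresholded/non-thresholded case analysis combined with $\theta(x,y)\lesssim k(x,x)k(y,y)$ and the weighted sparsity of Assumption~\ref{ass:mainAssumption}(i), and Gaussian-supremum concentration (Borell--TIS, the paper's Lemma~\ref{lem:SampleRhoNConcentration}) plus $\hatk(x,x)\asymp k(x,x)$ to show $\hat{\rho}_N\asymp\rho_N$. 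Apart from organizing the split by whether each entry is thresholded rather than via the set $\Omega_x$ defined through $|k|/\hattheta^{1/2}$ --- an equivalent bookkeeping choice --- the argument and its key lemmas coincide with the paper's, and the sketch is correct.
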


\begin{remark}
    For adaptive covariance matrix estimation under weighted $\ell_q$-sparsity described in Section~\ref{sec:LitReviewFiniteDims}, \cite[Theorem 1]{cai2011adaptive} showed that if the (normalized) sample covariance matrix satisfies $\max_{1 \le i,j \le d_X} |\hatSigma_{ij}- \Sigma_{ij}|/\hatV_{ij}^{1/2} \lesssim \tilde{\rho}_N$, then the operator norm error of the adaptive thresholding covariance matrix estimator can be bounded by $\tilde{R}_q^q \tilde{\rho}_N^{1-q}$, where $\tilde{R}_q^q$ controls the row-wise weighted $\ell_q$-sparsity of $\Sigma$. The choice of thresholding parameter can be understood by appealing to the analogy that covariance matrix estimation may be interpreted as a heteroscedastic Gaussian sequence model (see \cite[Section 2]{cai2011adaptive}), so that roughly speaking, for large $N,$
    \begin{align*}
        \frac{1}{N} \sum_{n=1}^N X_{ni}X_{nj} \approx \Sigma_{ij} + \sqrt{\frac{V_{ij}}{N}}Z_{ij}, \qquad 1 \le i,j \le d_X,
    \end{align*}
    with $\{Z_{ij}\}$ i.i.d. standard normal. This explains the choice of the thresholding parameter in the finite-dimensional setting (after normalizing the data by $\hatV_{ij}$), since
    \begin{align*}
        \tilde\rho_N \asymp \sqrt{\frac{\log d_X}{N}} 
        \asymp 
        \frac{\E \insquare{\max_{i,j \le d_X} Z_{ij}}}{\sqrt{N}},
    \end{align*}
     provides element-wise control on the sample covariance. In the infinite-dimensional setting considered here, we require instead high probability sup-norm concentration bounds for the sample covariance function $\hatk(x,y)$ and the estimated
     variance component function $\hattheta(x,y)$. These bounds are obtained in Section~\ref{sec:ErrorAnalysisAdaptiveEstimator} utilizing tools adapted from recent advances in the study of multi-product empirical processes \cite{al2025sharp} via generic chaining. These techniques are discussed in Section~\ref{sec:empiricalnew}. Our results show that the correct thresholding radius $\rho_N$ must scale with the expected supremum of the normalized associated Gaussian process, as this is precisely the dimension-free quantity needed to control the sample quantities uniformly over the domain $D$. Since $\rho_N$ is a population level quantity, we establish in Lemma~\ref{lem:SampleRhoNConcentration} that it may be replaced by its empirical counterpart $\hat{\rho}_N$ in the Gaussian setting, yielding a computable estimator $\hatC_{\hat{\rho}_N}^{\mathsf{A}}.$ This is possible since the associated Gaussian process agrees with the observed process, i.e. $u=v$ (see also Remark~\ref{rem:genericChainingControl} for a technical discussion of this point). In the sub-Gaussian setting, Theorem \ref{thm:ThresholdOpNormBoundwSampleRho} shows that an adaptive threshold estimator with an appropriate choice of thresholding radius $\rho_N$ achieves the same estimation error as in the Gaussian case. In practice, we advocate choosing the thresholding radius $\rho_N$ by cross-validation in non-Gaussian settings. 
     
     Our theory for Gaussian data holds for both the sample-based and the Wick's-based estimators of the variance component. The Wick's-based estimator might be preferred in practice as it only requires estimating the second moment of the process, whereas the sample-based estimator requires estimating both the second and fourth moments, which is more computationally intensive. From a theoretical perspective, the Wick's-based estimator is also easier to analyze using results for quadratic empirical processes (see e.g. \cite{mendelson2016upper, mendelson2010empirical}), whereas the sample estimator relies on bounds for higher order multi-product empirical processes as described in Section~\ref{sec:empiricalnew}.
     We further remark that in contrast to finite-dimensional results in which the high probability guarantee improves as $d_X$ increases, the probability in Theorem~\ref{thm:ThresholdOpNormBoundwSampleRho} approaches $1$ as the expected supremum of the normalized process grows. It is straightforward but tedious to derive high probability bounds that are more general in that they depend additionally on a confidence parameter $t \ge 1$. We provide such bounds for the pre-requisite Lemmas~\ref{lem:SampleCovFnConcentration} and \ref{lem:SampleCovFnSquaredConcentration}. In Section~\ref{ssec:multiscale}, we study an explicit family of processes for which the expected supremum can be expressed in terms of parameters of the covariance kernel. Finally, we note that the pre-factor $c_0$ is unspecified. In the existing literature (e.g. \cite{cai2011adaptive, bickel2008regularized, al2023covariance}) it is common to choose $c_0$ manually or in a data-driven way, such as via cross-validation. For our purposes, we fix $c_0=5$ in our simulated experiments in Section~\ref{ssec:SimulationResults}.
\end{remark}

\begin{remark}[Comparison to the Partially Observed Framework] \label{rem:PartialObs}
    In this work, we assume access to the (infinite dimensional) Gaussian random functions $u_1,\dots, u_N$. While in practice we cannot work with such infinite-dimensional functional data, the theory is nonetheless illuminating for finite dimensional discretizations, as demonstrated by our 
     empirical study 
    in Section~\ref{ssec:SimulationResults}. An alternative approach, described in Section~\ref{sec:LitReviewInfiniteDims}, is the partial observations framework. While this approach is often considered more practical as real-world data is always discrete, we argue that the partial observations approach inadvertently masks the underlying structure of the problem, as the bounds in that literature necessarily rely on the smoothness exponents (e.g. the exponent of the Hölder condition when the underlying true functions are assumed to be Hölder smooth), and not on the expected supremum of the process, as in our theory. This dependence is an artifact of the smoothness assumption, as opposed to being a quantity that fundamentally characterizes the behavior of the underlying process. In effect, the discretization step is taken far too early in the partial observations framework to uncover the dependence on the expected supremum. We comment that our approach is more in line with the operator learning literature \cite{kovachki2024operator, de2023convergence,mueller2012linear} and  adheres to the philosophy put forward in \cite{dashti2017bayesian}, which states  ``\textit{...it is advantageous to design algorithms which, in principle, make sense in infinite dimensions; it is these methods which will perform well under refinement of finite dimensional approximations.''}
\end{remark}

\subsection{Comparison to Other Estimators}\label{ssec:ComparingToOtherEstimators} 
In this section, we extend our analysis of the adaptive covariance operator estimator by comparing to other candidate estimators, namely the universal thresholding and sample covariance estimators. In Section~\ref{ssec:UniversalThresholding}, we first demonstrate that universal thresholding is inferior to adaptive thresholding over the class $\mcK_q^*(R_q)$. Next, in Section~\ref{ssec:multiscale} we restrict attention to a class of highly nonstationary processes and show that adaptive thresholding significantly improves over the sample covariance estimator. Finally, in Section~\ref{ssec:SimulationResults} we compare all three estimators on simulated experiments.

\subsubsection{Inferiority of Universal Thresholding}\label{ssec:UniversalThresholding}
In this section, we show rigorously that universal thresholding over the class $\mcK^*_q$ can perform arbitrarily poorly relative to adaptive thresholding. The result extends \cite[Theorem 4]{cai2011adaptive}, which demonstrates in the finite-dimensional setting that universal thresholding behaves poorly over the class $\mcU^*_q.$ The result relies on a reduction of the infinite-dimensional problem to a finite-dimensional one, to which the existing aforementioned theory can be applied.

\begin{theorem}\label{thm:UniversalThreshLowerBound}
    Suppose that $R_q^q \ge 8$ and $\rho_N$ is defined as in Theorem~\ref{thm:ThresholdOpNormBoundwSampleRho}. Then, there exists a covariance operator $C_0 \in \mcK^*_q(R_q)$ such that, for sufficiently large $N,$
\begin{align*}
     \inf_{\gamma_N \ge 0} 
    \E_{\{u_n\}_{n=1}^N \iid \text{GP}(0, C_0)} 
    \normn{\hatC^{\mathsf{U}}_{\gamma_N} - C_0}
    \gtrsim
    (R_q^q)^{2-q} \rho_N^{1-q},
\end{align*}
    where $\hatC^{\mathsf{U}}_{\gamma_N}$ is the universal thresholding estimator with threshold $\gamma_N.$ 
\end{theorem}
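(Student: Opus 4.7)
The overall strategy would be to reduce the infinite-dimensional claim to the finite-dimensional universal-thresholding lower bound of \cite[Theorem 4]{cai2011adaptive} via a piecewise-constant embedding. I would first partition $D = [0,1]^d$ into $p$ sub-hypercubes $\{D_i\}_{i=1}^p$ of equal volume $1/p$, with $p \ge 2$ chosen below. Letting $\Sigma_0 \in \mcU^*_q(\tilde{R}_q)$ with $\tilde{R}_q^q := p R_q^q$ denote the hard matrix from \cite[Theorem 4]{cai2011adaptive}, I would define $k_0(x,y) = \Sigma_{0,ij}$ for $(x,y) \in D_i \times D_j$ together with the corresponding integral operator $C_0$ (adding an arbitrarily small smooth positive-definite perturbation if strict positivity is required). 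Membership $C_0 \in \mcK^*_q(R_q)$ would then follow from
\begin{align*}
    \sup_{x\in D}\int_D \bigl(k_0(x,x)k_0(y,y)\bigr)^{(1-q)/2}|k_0(x,y)|^q \, dy
    = \frac{1}{p}\max_i \sum_{j=1}^p (\Sigma_{0,ii}\Sigma_{0,jj})^{(1-q)/2}|\Sigma_{0,ij}|^q \le R_q^q.
\end{align*}

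Next, I would exploit that $k_0$ is block-constant: any Gaussian process $u\sim \text{GP}(0,C_0)$ is equal in law to the piecewise-constant process $u(x) = z_i$ on $D_i$ with $z \sim \mathcal{N}(0,\Sigma_0)$, since both have the same covariance. Consequently $\hat{k}(x,y) = \hat{\Sigma}_{ij}$ on each block $D_i \times D_j$, and universal thresholding preserves this structure: $\hat{k}^{\mathsf{U}}_{\gamma_N}(x,y) = (\hat{\Sigma}^{\mathsf{U}}_{\gamma_N})_{ij}$. Under the isometry between piecewise-constant functions on $\{D_i\}$ and $\R^p$ (with the factor $\sqrt{p}$ from $\mathrm{vol}(D_i)=1/p$), a short computation would give
\begin{align*}
    \|\hat{C}^{\mathsf{U}}_{\gamma_N} - C_0\| = \frac{1}{p}\|\hat{\Sigma}^{\mathsf{U}}_{\gamma_N} - \Sigma_0\|,
\end{align*}
reducing the problem to a matrix operator-norm lower bound. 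Invoking \cite[Theorem 4]{cai2011adaptive} for $\Sigma_0 \in \mcU^*_q(\tilde{R}_q)$, I would obtain, for sufficiently large $N$,
\begin{align*}
    \inf_{\gamma_N \ge 0}\E \|\hat{\Sigma}^{\mathsf{U}}_{\gamma_N} - \Sigma_0\| \gtrsim (\tilde{R}_q^q)^{2-q}\bigl(\log p/N\bigr)^{(1-q)/2}.
\end{align*}

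Finally, I would match scalings. The Gaussian process associated with $u$ coincides with $u$, and the normalized field $v(x)/\sqrt{k_0(x,x)} = w_i/\sqrt{\Sigma_{0,ii}}$ on $D_i$ (with $w \sim \mathcal{N}(0,\Sigma_0)$) is a standard Gaussian on each block. Choosing $\Sigma_0$ from \cite[Theorem 4]{cai2011adaptive} so that these normalized diagonal entries are sufficiently decorrelated (e.g., with block-diagonal structure) and combining a Gaussian maximal inequality with Sudakov minoration would yield $\E \sup_{x\in D} v(x)/\sqrt{k_0(x,x)} \asymp \sqrt{\log p}$, so that $\rho_N \asymp \sqrt{\log p/N}$. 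Dividing the matrix lower bound by $p$ and using $p\ge 1$,
\begin{align*}
    \inf_{\gamma_N \ge 0}\E \|\hat{C}^{\mathsf{U}}_{\gamma_N} - C_0\| \gtrsim \frac{(\tilde{R}_q^q)^{2-q}}{p}\rho_N^{1-q} = p^{1-q}(R_q^q)^{2-q}\rho_N^{1-q} \ge (R_q^q)^{2-q}\rho_N^{1-q},
\end{align*}
as desired. The main technical hurdle will be reconciling the hard instance of \cite[Theorem 4]{cai2011adaptive} with the enlarged sparsity parameter $\tilde{R}_q^q = pR_q^q$; the hypothesis $R_q^q \ge 8$ should supply enough room in the weighted sparsity budget to accommodate their construction (whose diagonal entries are of order $\tilde{R}_q^q$). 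A secondary point is that the reduction produces only a positive semidefinite $C_0$; strict positivity can be restored by an arbitrarily small smooth perturbation without affecting the leading-order bound.
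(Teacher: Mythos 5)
Your overall route --- a piecewise-constant embedding that reduces the problem to the finite-dimensional universal-thresholding lower bound of \cite[Theorem 4]{cai2011adaptive} --- is exactly the paper's, and the structural steps (block-constant kernel, block-constant sample covariance, the factor $1/p$ relating the operator norm to the matrix spectral norm, absorbing the thresholding level into the infimum over $\gamma_N$) are all sound. The genuine gap is in your scaling bookkeeping: you propose to take the hard instance $\Sigma_0$ from \cite[Theorem 4]{cai2011adaptive} in the class $\mcU^*_q(\tilde R_q)$ with $\tilde R_q^q = pR_q^q$ and then invoke their lower bound at that inflated radius. That theorem is not applicable there: its hypotheses cap the weighted sparsity radius (in the notation of the paper's proof, $8 \le R_q^q \le \min\{m^{1/4},\, 4\sqrt{N/\log m}\}$), and $pR_q^q$ violates these caps for any admissible choice of $p$. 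Worse, the hard construction itself has off-diagonal entries proportional to the radius times $\sqrt{\log p/N}$ against order-one diagonal entries, so at radius $pR_q^q$ it is no longer a valid (positive definite) covariance matrix. The telltale sign is your final bound $p^{1-q}(R_q^q)^{2-q}\rho_N^{1-q}$, which is stronger than the target by a factor $p^{1-q}$ that can be made arbitrarily large --- a lower bound you cannot obtain, and which the hypothesis $R_q^q\ge 8$ cannot rescue.

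The paper's fix is to put the $p$-dependence in the \emph{amplitude} rather than the radius: keep the Cai--Liu instance $\tildeH_0 \in \mcU^*_q(m,R_q)$ at the original radius $R_q$ (so their theorem applies), and set $H_0 = m\tildeH_0$, so that the factor $m$ from the amplitude exactly cancels the block-volume factor $1/m$ in the verification that $C_{H_0}\in\mcK^*_q(R_q)$. Since universal thresholding commutes with rescaling the data ($u_n=\sqrt m\, v_n$ with $v_n\sim\text{GP}(0,C_{\tildeH_0})$) once you take the infimum over all $\gamma_N\ge 0$, the matrix lower bound transfers with the \emph{same} $R_q$, giving $(R_q^q)^{2-q}(\log m/N)^{(1-q)/2}$, which is the stated rate. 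Two smaller points: the last step only needs the upper bound $\E[\sup_{x\in D} \tildeu(x)]\lesssim\sqrt{\log m}$ (a Gaussian maximal inequality after computing the normalized diagonal, as in the paper), so Sudakov minoration is unnecessary; and your concern about strict positive definiteness is minor, since the hard instance is strictly positive definite for large $N$.
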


\begin{remark}\label{rem:UniversalThresholdingGeneral}
Since $q\in (0,1)$, the lower bound for universal thresholding in Theorem~\ref{thm:UniversalThreshLowerBound} is larger than the upper bound for adaptive thresholding in Theorem~\ref{thm:ThresholdOpNormBoundwSampleRho}, and this discrepancy grows as $R_q$ increases. Therefore, Theorem~\ref{thm:UniversalThreshLowerBound} implies that over the class $\mcK^*_q$, universal thresholding estimators can perform significantly worse than their adaptive counterparts, regardless of how the universal threshold parameter $\gamma_N$ is chosen. This agrees with intuition, since if the scale of the marginal variance
of the process varies significantly over the domain $D$, universal thresholding will intuitively need to scale with the largest of these 
scales, 
and as a result the thresholding radius will be too large for all but a small portion of the domain. See also Remark~\ref{rem:UniversalThresholding}.
\end{remark}

\subsubsection{Nonstationary Weighted Covariance Models}\label{ssec:multiscale} 
In this section, we further demonstrate the utility of adaptive thresholding by applying Theorem \ref{thm:ThresholdOpNormBoundwSampleRho} to an explicit class of highly nonstationary covariance models. To that end, we restrict attention to a subset of $\mcK^*_q$ of operators with covariance functions of the form 
\begin{align}\label{eq:nonstationaryKernels}
    k(x,y) = \sigma(x)\sigma(y) \tildek(x,y),
\end{align}
where $\tilde{k}$ is chosen to be an isotropic \textit{base} covariance function and $\sigma$ will represent a \textit{marginal variance} function. It follows by standard facts on the construction of covariance functions (see e.g. \cite{genton2001classes}) that \eqref{eq:nonstationaryKernels} defines a valid covariance function. This class is particularly interesting as it can be thought of as a \textit{weighted} version of many standard isotropic covariance functions used in practice, such as the squared exponential $\tildek^{\text{SE}}$ and Matérn $\tildek^{\text{Ma}}$ classes, defined respectively in \eqref{eq:SEMaDefs}.
Further, it permits us to express the theoretical quantities of Theorem \ref{thm:ThresholdOpNormBoundwSampleRho} in terms of interpretable parameters of the covariance function, as we now describe rigorously.

 Throughout this section, we  
assume that the data $u_1, \ldots, u_N$ are Gaussian and that the base function $\tildek$ satisfies the following:
\begin{assumption}\label{ass:isotropicandlengthscale}
    $\tildek$ in \eqref{eq:nonstationaryKernels} is a covariance function satisfying:
    \begin{enumerate}[label=(\roman*)]
        \item $\tildek$ is isotropic and positive, so that for $r = \normn{x-y}$, $\tildek(x,y) = \tildek(r)>0$. Further, $\tildek(r)$ is differentiable, strictly decreasing on $[0,\infty)$, and satisfies $\lim_{r \to \infty} \tildek(r) = 0$.
        \item $\tildek= \tildek_\lambda$ depends on a correlation lengthscale parameter $\lambda > 0$ such that $\tildek_\lambda(\phi r) = \tildek_{\lambda \phi^{-1} }(r)$ for any $\phi>0$, and $\tildek_\lambda(0) = \tildek(0)$ is independent of $\lambda.$
    \end{enumerate}
\end{assumption}

The class described in Assumption~\ref{ass:isotropicandlengthscale} contains many popular examples of covariance functions, such as the squared exponential (Gaussian) and the Matérn models \cite{williams2006gaussian}. Importantly though, functions of the form \eqref{eq:nonstationaryKernels} are significantly more general since they are nonstationary (and therefore non-isotropic). 
 This nonstationarity is introduced by the marginal variance functions $\sigma$, which
 are termed as such since $k(x,x) = \sigma^2(x)$. 
The theoretical study of the small lengthscale regime was initiated in \cite{al2023covariance} in which the authors considered kernels satisfying Assumption~\ref{ass:isotropicandlengthscale}, or equivalently $\sigma(x) \equiv 1$. Our analysis here extends their results to the challenging non-isotropic and nonstationary setting. For concreteness, we focus on a specific class of marginal variance functions detailed in the following assumption, where we let $\sigma$ depend on the parameter $\lambda$ in Assumption \ref{ass:isotropicandlengthscale}.

\begin{assumption}\label{ass:WeightFunction}
    The marginal variance function in \eqref{eq:nonstationaryKernels} is taken to be either $\sigma_\lambda(x;\alpha) \equiv 1$ or
    $\sigma_\lambda(x; \alpha) = \exp \inparen{ \lambda^{-\alpha} \normn{x}^2}$ for $\alpha \in (0,1/2).$
\end{assumption}

Functions of the form \eqref{eq:nonstationaryKernels} and which additionally satisfy Assumptions~\ref{ass:isotropicandlengthscale} and \ref{ass:WeightFunction} are denoted by $k_\lambda(x,y)$. 
For small $\lambda,$ the case $\sigma_\lambda(x; \alpha) = \exp \inparen{ \lambda^{-\alpha} \normn{x}^2}$ results in a particularly challenging estimation problem due to the extreme nonstationarity induced by the wide range of the exponential function across the domain. The case $\sigma_\lambda(x;\alpha) \equiv 1$ allows us to include unweighted covariance functions, thus strictly generalizing the theory in \cite[Section 2.2]{al2023covariance}.
Our main interest in the exponential marginal variance function is further motivated by the following fact regarding the exponential dot-product covariance function, $k_\lambda(x,y) = \exp(x^\top y/\lambda^2)$, which is interesting as it may be viewed as the simplest example of a nonstationary covariance function. Note that we can write
\begin{align*}
        \exp\inparen{\frac{x^\top y}{\lambda^2}}
        &=
        \exp\inparen{\frac{\normn{x}^2}{2\lambda^2}}
        \exp\inparen{\frac{\normn{y}^2}{2\lambda^2}}
        \exp\inparen{-\frac{\normn{x-y}^2}{2 \lambda^2}}\\
        &=
        \exp\inparen{\frac{\normn{x}^2}{2\lambda^2}}
        \exp\inparen{\frac{\normn{y}^2}{2\lambda^2}}
        \tildek^{\text{SE}}_\lambda(x,y).
\end{align*}
Figure~\ref{fig:varyingAlphad1} shows random draws when $\tildek$ is chosen to be the squared exponential (SE) covariance function, with varying choices of $\lambda$ and $\alpha$. It is immediately clear that for smaller $\lambda$, the processes become more \textit{local}, whereas the role of $\alpha$ is to change the scale of the process across the domain, with this change being more pronounced for larger $\alpha.$ Analogous plots in the $d=2$ case are presented in Figure~\ref{fig:varyingAlphad2} in the appendix.

\begin{figure}
    \centering
    \includegraphics[width=0.72\textwidth]{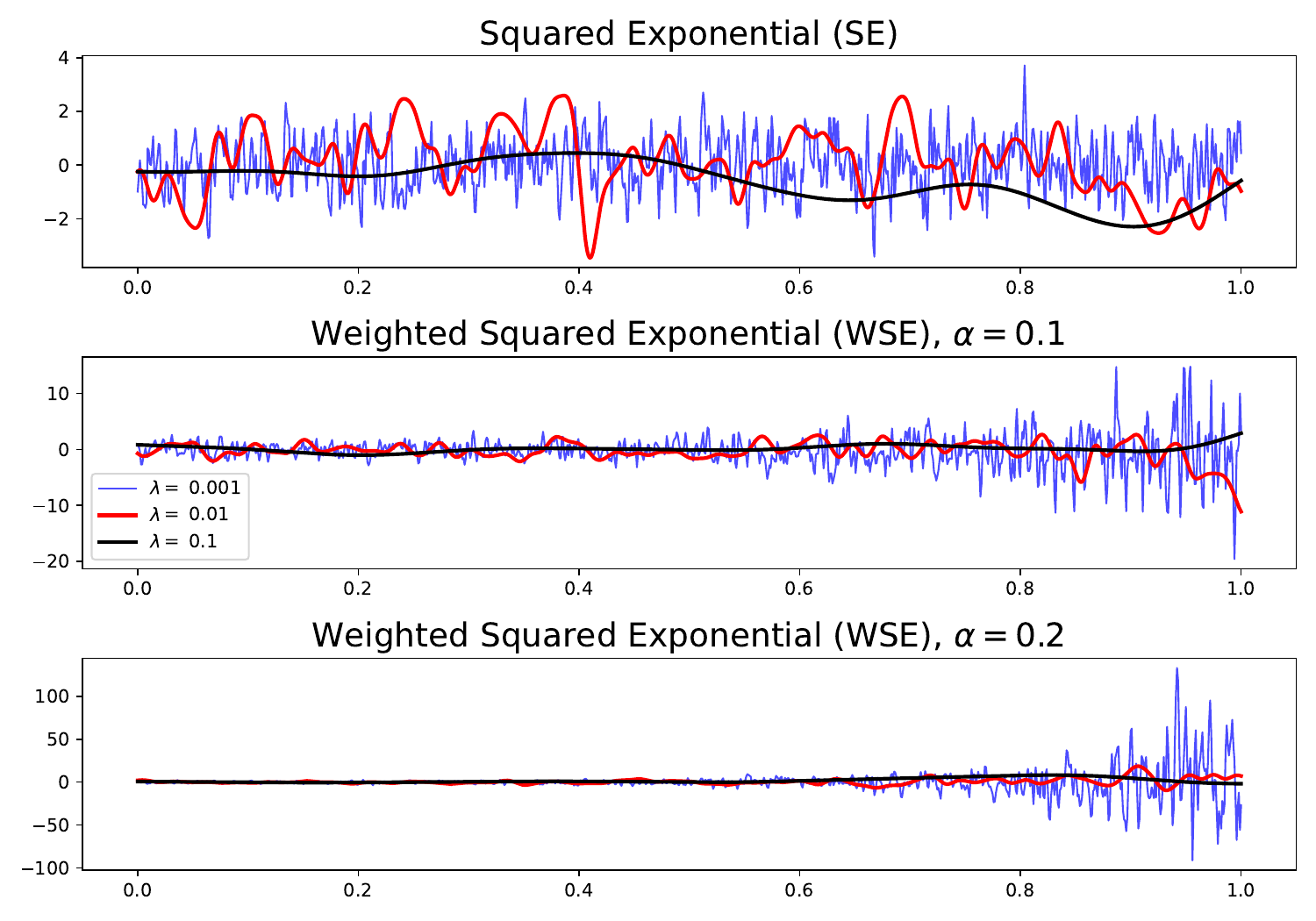}
    \caption{Draws from a centered Gaussian process on $D=[0,1]$ with weighted SE covariance function of the form \eqref{eq:nonstationaryKernels} with SE base kernel defined in \eqref{eq:SEMaDefs}. In the first plot, $\sigma =1$ (unweighted), and in the second and third plots, $\sigma$ is chosen according to Assumption~\ref{ass:WeightFunction} and with $\alpha=0.1, 0.2$ respectively. The scale parameter $\lambda$ is varied over $0.001$ (blue), $0.01$ (red) and $0.1$ (black)
    %Reference: this is generated using KernelPlots.ipynb
    }
    \label{fig:varyingAlphad1}
\end{figure}
\begin{theorem}[Sample Covariance vs. Adaptive 
Thresholding]\label{thm:CompareAdaptiveAndSampleCov}
    Let $\hatC$ and $\hatC_{\hat{\rho}_N}^{\mathsf{A}}$ denote the sample covariance and adaptively thresholded estimator respectively. Then, there exists a universal constant $\lambda_0 > 0$ such that, for all $\lambda < \lambda_0$, it holds with probability at least $1-\lambda^d$
    that 
    \begin{align}
        \frac{\normn{\hatC- C}}{\normn{C}} 
        &\asymp
        \sqrt{\frac{\lambda^{-d}}{N}} \lor \frac{\lambda^{-d}}{N}, \label{eq:SampleCovBd}\\
    \frac{\normn{ \hatC_{\hat{\rho}_N}^{\mathsf{A}}} - C} {\|C\|}
    &\lesssim
    c(q) 
    \inparen{ \frac{ \log(\lambda^{-d})}{N} }^{(1-q)/2},\label{eq:AdaptiveCovBd}
\end{align}
where $\mathsf{A} \in \{ \mathsf{S}, \mathsf{W} \}$ and $c(q)$ is a constant depending only on $q$.
\end{theorem}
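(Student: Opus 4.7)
The plan is to handle the two bounds separately. For \eqref{eq:SampleCovBd}, I would invoke a Koltchinskii--Lounici-type operator norm concentration inequality \cite{koltchinskii2017concentration, lounici2014high}, which for Gaussian data gives, with probability $1-e^{-t}$,
\[
\frac{\normn{\hatC - C}}{\normn{C}} \asymp \sqrt{\frac{r(C)}{N}} \lor \frac{r(C)}{N}, \qquad r(C):= \frac{\ttrace(C)}{\normn{C}}.
\]
Choosing $t \asymp \log(\lambda^{-d})$ yields the claimed probability $1-\lambda^d$, so the task reduces to showing $r(C) \asymp \lambda^{-d}$ (up to constants depending only on $\tildek$ and $\alpha$). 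For \eqref{eq:AdaptiveCovBd}, I would first verify Assumption~\ref{ass:mainAssumption} for the class \eqref{eq:nonstationaryKernels} and then apply Theorem~\ref{thm:ThresholdOpNormBoundwSampleRho}, which requires computing $R_q^q$ and the expected supremum of the normalized associated Gaussian process as functions of $\lambda$.

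For the sample covariance side, I would compute $\ttrace(C) = \int_D \sigma_\lambda(x)^2 \tildek(0)\, dx$ directly and estimate $\normn{C}$ via two matching bounds. The upper bound uses Schur's test:
\[
\normn{C} \le \sup_{x \in D} \sigma_\lambda(x) \int_D \sigma_\lambda(y)\, \tildek_\lambda(\norm{x-y})\, dy.
\]
Substituting $y = x + \lambda z$ and applying Assumption~\ref{ass:isotropicandlengthscale}(ii) (which yields $\tildek_\lambda(\lambda r) = \tildek(r)$) reduces the inner integral to $\sigma_\lambda(x) \lambda^d \int \tildek(\norm{z})\, dz$ up to a multiplicative factor of $1+o(1)$, because the weight $\sigma_\lambda$ is slowly varying on the lengthscale $\lambda$ when $\alpha<1$. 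The matching lower bound follows by testing against a bump $f$ supported on a ball of radius $\lambda$ near the corner maximizer of $\sigma_\lambda$, yielding $\normn{C} \asymp \sup_x \sigma_\lambda(x)^2 \cdot \lambda^d$. Combining with the trace calculation gives $r(C) \asymp \lambda^{-d}$ up to constants absorbing $\alpha$, and \eqref{eq:SampleCovBd} follows.

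For the adaptive side, the same change of variables applied to Assumption~\ref{ass:mainAssumption}(i) yields
\[
R_q^q = \sup_{x \in D} \sigma_\lambda(x) \int_D \sigma_\lambda(y) \,\tildek_\lambda(\norm{x-y})^q\, dy \asymp \sup_x \sigma_\lambda(x)^2 \cdot \lambda^d \asymp \normn{C},
\]
so that $R_q^q/\normn{C}$ is bounded by a constant depending only on $q$ and $\tildek$. The normalized process $v(x)/\sqrt{k(x,x)}$ is stationary with correlation $\tildek_\lambda(\norm{x-y})/\tildek(0)$ of lengthscale $\lambda$ on $[0,1]^d$, and its expected supremum is bounded by $\sqrt{\log(\lambda^{-d})}$ via Dudley's entropy integral, following the argument in \cite[Section~2.2]{al2023covariance}; hence $\rho_N \asymp \sqrt{\log(\lambda^{-d})/N}$. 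Assumption~\ref{ass:mainAssumption}(ii) holds with $\nu = 1$ by Wick's theorem (since $\theta(x,y) = k(x,x)k(y,y) + k(x,y)^2 \ge k(x,x)k(y,y)$), and (iii) is automatic for $N$ large enough. Invoking Theorem~\ref{thm:ThresholdOpNormBoundwSampleRho} together with Lemma~\ref{lem:SampleRhoNConcentration} to replace $\rho_N$ by $\hat\rho_N$ gives
\[
\frac{\normn{\hatC_{\hat\rho_N}^{\mathsf{A}} - C}}{\normn{C}} \lesssim \frac{R_q^q}{\normn{C}}\, \rho_N^{1-q} \lesssim c(q)\, \left( \frac{\log(\lambda^{-d})}{N} \right)^{(1-q)/2},
\]
with the $e^{-N\rho_N^2}$ probability statement converting to $1-\lambda^d$ by our choice of $\rho_N$.

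The hardest step is the uniform analysis of the weighted integrals defining $R_q^q$ and $\normn{C}$ in the presence of the rapidly growing weight $\sigma_\lambda(x;\alpha) = \exp(\lambda^{-\alpha}\norm{x}^2)$. The crucial point is that for $y = x + \lambda z$ with $\norm{z}$ bounded,
\[
\frac{\sigma_\lambda(y)}{\sigma_\lambda(x)} = \exp\bigl(2\lambda^{1-\alpha}x^\top z + \lambda^{2-\alpha}\norm{z}^2\bigr) = 1 + o(1),
\]
uniformly in $x \in [0,1]^d$, because $\alpha \in (0,1/2)$. The Taylor expansion of this ratio and its integration against the rapidly decaying profile $\tildek$ must be handled carefully so that the $o(1)$ perturbations do not destroy the matching $R_q^q \asymp \normn{C}$. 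A secondary difficulty is constructing the explicit test function for the lower bound on $\normn{C}$ near the corner maximizer of $\sigma_\lambda$, where the effective support of the bump $f$ must be intersected with $[0,1]^d$ and the $\lambda$-dependent prefactors tracked to leading order.
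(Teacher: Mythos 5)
Your overall architecture matches the paper's: \eqref{eq:SampleCovBd} via the Koltchinskii--Lounici inequality with effective rank $r(C)=\ttrace(C)/\normn{C}$ and confidence level $t\asymp\log(\lambda^{-d})$, and \eqref{eq:AdaptiveCovBd} via Theorem~\ref{thm:ThresholdOpNormBoundwSampleRho} (with Lemma~\ref{lem:SampleRhoNConcentration} already built into it), after verifying Assumption~\ref{ass:mainAssumption} (indeed (ii) holds with $\nu=1$ by Wick in the Gaussian case) and characterizing $R_q^q$, $\normn{C}$ and $\E[\sup_{x\in D}\tildev(x)]\asymp\sqrt{\log(\lambda^{-d})}$ for the normalized (isotropic) process. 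Where you genuinely depart is in how the $\lambda$-asymptotics of $\normn{C}$ and $R_q^q$ are obtained: the paper derives them through global changes of variables, the covariance (Chebyshev) inequality and Dawson-function asymptotics (Lemmas~\ref{lem:TraceBound}, \ref{lem:supklambda_positiveexponential}, \ref{lem:SparsityParameterCharacterization}, \ref{lem:CovOpBound}), obtaining $\ttrace(C)\asymp\lambda^{\alpha d}e^{2d/\lambda^{\alpha}}$, $\normn{C}\asymp\lambda^{d(1+\alpha)}e^{2d/\lambda^{\alpha}}$ and $R_q^q\asymp q^{-d}\lambda^{d(1+\alpha)}e^{2d/\lambda^{\alpha}}$, from which $r(C)\asymp\lambda^{-d}$ and $R_q^q/\normn{C}\asymp c(q)$ follow; you instead use a Schur-test upper bound plus a corner bump, treating the weight as locally constant, and assert $\normn{C}\asymp R_q^q\asymp\sup_{x}\sigma_\lambda^2(x)\,\lambda^{d}=\lambda^{d}e^{2d/\lambda^{\alpha}}$.

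The gap is in the sample-covariance half. The direct trace computation you propose gives $\ttrace(C)=\tildek(0)\normn{\sigma_\lambda}^2_{L_2(D)}\asymp\lambda^{\alpha d}e^{2d/\lambda^{\alpha}}$: the mass of $\sigma_\lambda^2$ concentrates in a boundary layer of width $\asymp\lambda^{\alpha}$ at the corner, which is exactly the Dawson-function factor in Lemma~\ref{lem:TraceBound}. Combined with your $\normn{C}\asymp\lambda^{d}e^{2d/\lambda^{\alpha}}$, this yields $r(C)\asymp\lambda^{-d(1-\alpha)}$, not $\lambda^{-d}$; the mismatch is the power $\lambda^{\alpha d}$, which cannot be ``absorbed into constants depending on $\alpha$'' as you claim. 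In the paper the clean rate $\lambda^{-d}$ arises because the same $\lambda^{\alpha d}$ boundary-layer factor appears in \emph{both} $\ttrace(C)$ and the operator-norm characterization of Lemma~\ref{lem:CovOpBound} (where the weight enters through a domain average rather than its supremum) and cancels in the quotient. Your $1+o(1)$ slow-variation argument controls the weight only on the $O(\lambda)$ scale of the kernel and silently discards its $\lambda^{\alpha}$-scale variation on one side of the ratio but not the other, so \eqref{eq:SampleCovBd} does not follow from your own estimates; note moreover that your corner-bump lower bound $\normn{C}\gtrsim\lambda^{d}e^{2d/\lambda^{\alpha}}$ is in tension with Lemma~\ref{lem:CovOpBound} by that same $\lambda^{\alpha d}$ factor, a discrepancy you would have to resolve before the two-sided statement can be closed. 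By contrast, the adaptive half only needs the ratio bound $R_q^q/\normn{C}\lesssim c(q)$ (an upper bound on $R_q^q$ against a lower bound on $\normn{C}$), and since your corner analysis produces the same $\lambda^{d}e^{2d/\lambda^{\alpha}}$ scaling for both, that part of your plan can plausibly be made rigorous along your lines, provided the Schur-type bound on $\sup_{x}\int_D\sigma_\lambda(x)\sigma_\lambda(y)\tildek_\lambda(\normn{x-y})^q\,dy$ is made uniform in $x$, which you only sketch.
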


\begin{remark}
    In the proof of Theorem~\ref{thm:CompareAdaptiveAndSampleCov}, we provide an explicit expression for the constant $c(q)$. We remark that when  
     $\tildek :=\tildek^{\text{SE}},$ 
    straightforward calculations yield that $c(q) \asymp q^{-3d/2}$. We note once more that throughout this work, the dimension $d$ of the physical domain $D=[0,1]^d$ is treated as a constant. 
\end{remark}

Theorem~\ref{thm:CompareAdaptiveAndSampleCov} --- motivated by \cite{al2023covariance, koltchinskii2017concentration} --- considers the \textit{relative} as opposed to the \textit{absolute} errors commonly used in the sparse estimation literature \cite{bickel2008regularized, cai2011adaptive, fang2023adaptive}. The bound demonstrates that when $\lambda$ is sufficiently small, the adaptive thresholding estimator exhibits an exponential improvement in sample complexity over the sample covariance estimator. We remark that the bound is identical to \cite[Theorem 2.8]{al2023covariance} which considers the less general class of unweighted covariance functions, i.e. with $\sigma_\lambda := 1$ in \eqref{eq:nonstationaryKernels}. The sample covariance bound \eqref{eq:SampleCovBd} follows by an application of \cite[Theorem 9]{koltchinskii2017concentration}, which shows that with high probability $\normn{\hatC - C} \lesssim \|C\| (\sqrt{r(C)/N} \lor r(C)/N)$, where $r(C) = \ttrace(C)/\|C\|$ is the effective (intrinsic) dimension of $C$. To apply this result, it is therefore necessary to derive sharp characterizations for both $\ttrace(C)$ and $\|C\|$ in terms of the covariance function parameters $\alpha, \lambda$, which we provide in Lemmas~\ref{lem:TraceBound} and \ref{lem:CovOpBound}. The adaptive covariance bound \eqref{eq:AdaptiveCovBd} follows by an application of our main result, Theorem~\ref{thm:ThresholdOpNormBoundwSampleRho}, and therefore requires a sharp characterization of $R_q$ and $\rho_N$ in terms of the same parameters, provided in Lemma~\ref{lem:SparsityParameterCharacterization}.

\begin{remark}[Universal Thresholding] \label{rem:UniversalThresholding}
    Theorem~\ref{thm:CompareAdaptiveAndSampleCov} shows that for sufficiently small $\lambda$, adaptive thresholding with an appropriately chosen thresholding parameter will significantly outperform the sample covariance estimator. It is also instructive to consider the universal thresholding estimator in which the same threshold radius $\rho_N^{\mathsf{U}}$ is used at all points $(x,y)$ when estimating $k(x,y)$. This estimator was studied in \cite{al2023covariance} under the assumption that the covariance operator belonged to the class $\mcK_q$, with $M:= \sup_{x\in D}k(x,x)=1$. Removing the bounded marginal variance assumption, a careful analysis of their theory suggests that the universal threshold radius should be chosen to scale with $\sup_{x \in D} k(x,x)$. This is analogous to the finite $d_X$-dimensional covariance matrix estimation theory (see e.g. \cite{bickel2008regularized, cai2011adaptive}) in which the (universal) thresholding parameter must be chosen to scale with $\max_{i \le d_X} |\Sigma_{ii}|$. For processes with marginal variances that dramatically differ across the domain however, and as noted in Remark~\ref{rem:UniversalThresholdingGeneral}, such a scaling causes the estimator to fail as it will necessarily set the estimator to zero for a large proportion of the domain. Specifically in the setting of Assumption~\ref{ass:WeightFunction}, we have that $\sup_{x\in D} k_\lambda(x,x) = e^{2d/\lambda^\alpha}$ and $\inf_{x\in D} k_\lambda(x,x) = 1$. Therefore, as $\lambda$ decreases, the ratio of largest to smallest marginal variances of the process diverges, and the universal thresholding estimator is zero for larger portions of the domain. This behavior is also borne out in our simulation results (see Figure~\ref{fig:d1MultipleUniversalResults}) in which a grid of universal threshold parameters is considered and all fail dramatically relative to the adaptive estimator.
\end{remark}

\subsubsection{Simulation Results}\label{ssec:SimulationResults}
In this section, we study the behavior of the sample covariance, universal thresholding, and adaptive thresholding estimators. The results provide numerical evidence for our Theorem~\ref{thm:CompareAdaptiveAndSampleCov}, and also for the discussion around universal estimators in Remark~\ref{rem:UniversalThresholding}. Our experiments are carried out in physical dimension $d=1$ (we also provide results for the case $d=2$ in \ref{app:Dim2Simulations}). Although our theory works for any base kernel $\tildek$ satisfying Assumption~\ref{ass:isotropicandlengthscale}, we focus here on the squared exponential (SE) and Matérn (Ma) classes for simplicity, defined respectively by 
\begin{align}\label{eq:SEMaDefs}
\begin{split}
    \tildek_\lambda^{\text{SE}} (x,y) 
    &= \exp \inparen{-\frac{\|x-y\|^2}{2 \lambda ^2} } \, ,\\
    \tildek_\lambda^{\text{Ma}} (x,y) 
    &= 
    \frac{2^{1-\nu}}{\Gamma(\nu)}
    \inparen{ \frac{\sqrt{2\nu}}{\lambda} \|x-y\|}^\nu
    K_\nu \inparen{\frac{\sqrt{2\nu}}{\lambda} \|x-y\|}\, ,
    \qquad \nu > \frac{d-1}{2} \lor \frac{1}{2},
\end{split}
\end{align}
where $\Gamma$ denotes the Gamma function and $K_\nu$ is the modified Bessel function of the second kind. Both covariance functions can be shown to satisfy the assumptions in this work, see \cite[Section 2.2]{al2023covariance}. Our samples are generated by discretizing the domain $D=[0,1]$ with a uniform mesh of $L=1000$ points. We consider a total of 30 choices of $\lambda$ arranged uniformly in log-space and ranging from $10^{-2.5}$ to $10^{-0.1}$. For each $\lambda$, with corresponding covariance operator $C$, the discretized operators are given by the $L \times L$ covariance matrix $C^{ij} = \bigl(k(x_i, x_j)\bigr)_{1 \le i,j \le L}$. We sample $N=5 \log(\lambda^{-d})$ realizations of a Gaussian process on the mesh, denoted $u_1,\dots, u_N \sim N(0,C).$ We then compute the empirical and (adaptively) thresholded sample covariance matrices
\begin{align*}
    \hatC^{ij} = \frac{1}{N} \sum_{n=1}^N u_n(x_i)u_n(x_j),
    \qquad 
    \hatC^{A, ij}_{\hat{\rho}_N} = \hatC^{ij}  \indicator \bigl\{ |\hatC^{ij}| \ge \hat{\rho}_N (\hattheta^{ij})^{1/2} \bigr\}, 
    \qquad 1 \le i,j \le L,
\end{align*}
where $\hat{\rho}_N$ is defined as in Theorem~\ref{thm:ThresholdOpNormBoundwSampleRho},
and $\hattheta_{ij}$ are the estimated variance components defined by either
\begin{align*}
    (\hattheta_{\mathsf{S}})^{ij} = \frac{1}{N} \sum_{n=1}^N \inparen{ u_n(x_i) u_n(x_j) -\hatC^{ij}}^2, 
    \qquad 1 \le i,j \le L,
\end{align*}
or
\begin{align*}
    (\hattheta_{\mathsf{W}})^{ij} = \hatC^{ii}\hatC^{jj} + (\hatC^{ij})^2, \qquad 1 \le i,j \le L.
\end{align*}

\begin{figure}
    \centering
\includegraphics[scale=0.5]{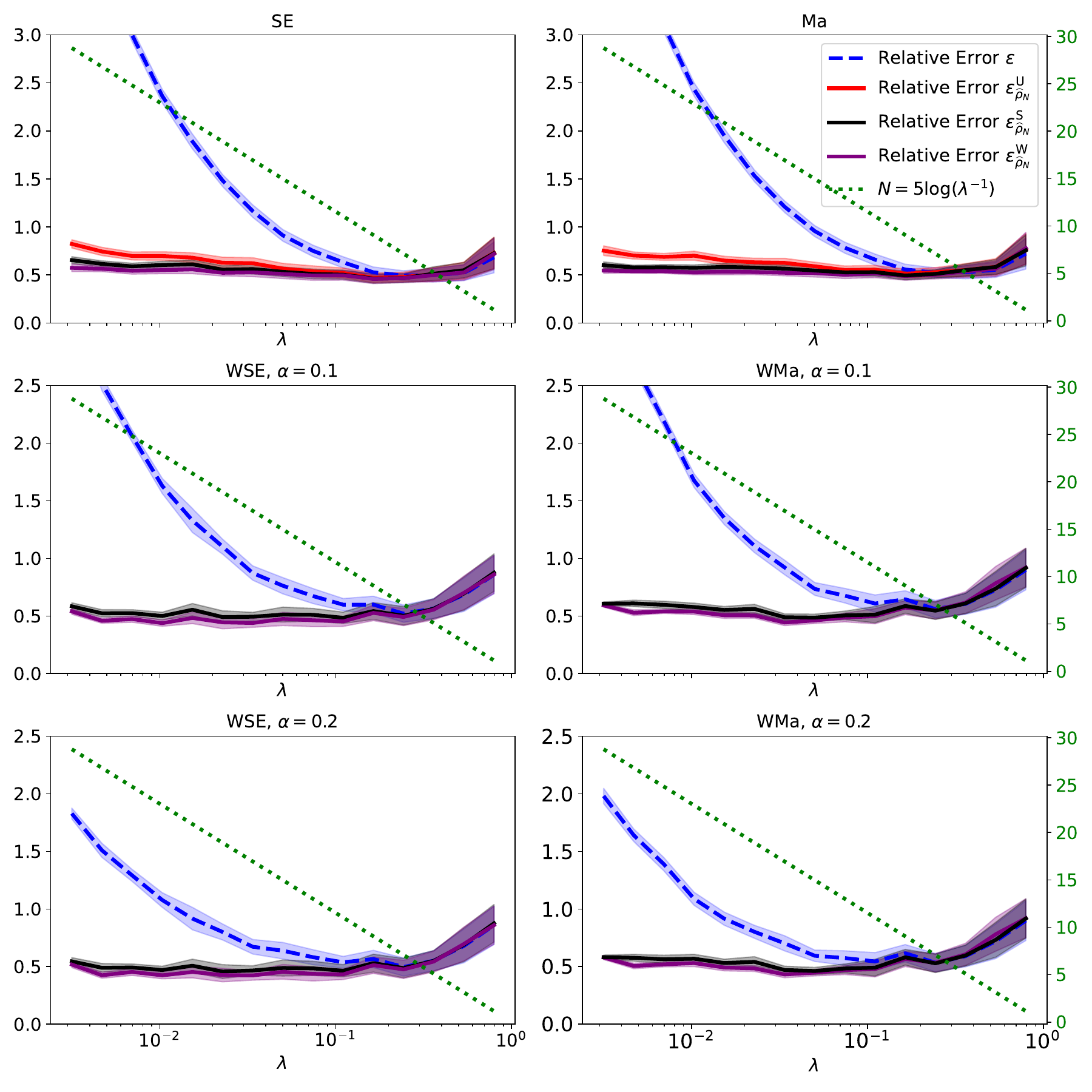}
    \caption{Plots of the average relative errors and 95\% confidence intervals achieved by the sample ($\varepsilon$, dashed blue), universal thresholding ($\varepsilon^{\mathsf{U}}_{\hat{\rho}_N}$, red), sample-based adaptive thresholding ($\varepsilon^{\mathsf{S}}_{\hat{\rho}_N}$, black) and Wick's adaptive thresholding ($\varepsilon^{\mathsf{W}}_{\hat{\rho}_N}$, purple) covariance estimators based on a sample size ($N$, dotted green) for the (weighted) squared exponential (left) and (weighted) Matérn (right) covariance functions in $d=1$ over 30 Monte-Carlo trials and 30 scale parameters $\lambda$ ranging from $10^{-2.5}$ to $10^{-0.1}$. The first row corresponds to the unweighted covariance functions and is the only case in which the universal thresholding estimator is considered; the second and third rows correspond to the weighted variants with $\alpha=0.1, 0.2$ respectively.}
    \label{fig:d1Results}
    % plot generated using AdaptiveThresholding_d=1.ipynb
\end{figure}
To quantify performance, we consider the relative error of each of the estimators, i.e. $\varepsilon = \normn{\hatC - C}/\|C\|$ for the sample covariance, with analogous definitions for the other estimators considered. We repeat the experiment a total of 100 times for each lengthscale, and provide plots of the average relative errors as well as a 95\% confidence intervals over the trials. In Figure~\ref{fig:d1Results}, we consider in the first row the (unweighted) squared exponential and Matérn functions (with $\sigma_\lambda := 1$), in the second and third rows we choose $\sigma_\lambda$ according to Assumption~\ref{ass:WeightFunction} with $\alpha = 0.1$ and $\alpha=0.2$ respectively. In the unweighted case, we also consider the universally thresholded estimator where the threshold is taken to be $\hat{\rho}_N$, which is the correct choice by \cite[Theorem 2.2]{al2023covariance}. Note that in this case,  since $k(x,x)=1$ for all $x\in D$, all marginal variances are of the same scale, and so the universal and adaptive estimators have the same rate of convergence. While all thresholding estimators exhibit good performance as their relative errors are below $1$, it is clear that adaptive thresholding out-performs universal thresholding for the choice of pre-factor $5.$
Note that  all thresholding estimators significantly outperform the sample covariance estimator for small $\lambda$. For the second and third rows, the adaptive estimator continues to significantly outperform the sample covariance and is unaffected by the differences in scale introduced by $\sigma_\lambda$.  We observe that the sample-based and Wick's-based adaptive estimators perform similarly, with the Wick's-based estimator exhibiting slightly better performance in all experiments. The results clearly demonstrate our Theorem~\ref{thm:CompareAdaptiveAndSampleCov}, since taking only $N=5 \log(\lambda^{-1})$ samples, the relative error of the adaptive estimator remains constant as $\lambda$ decreases. 

\begin{figure}
    \centering
\includegraphics[scale=0.5]{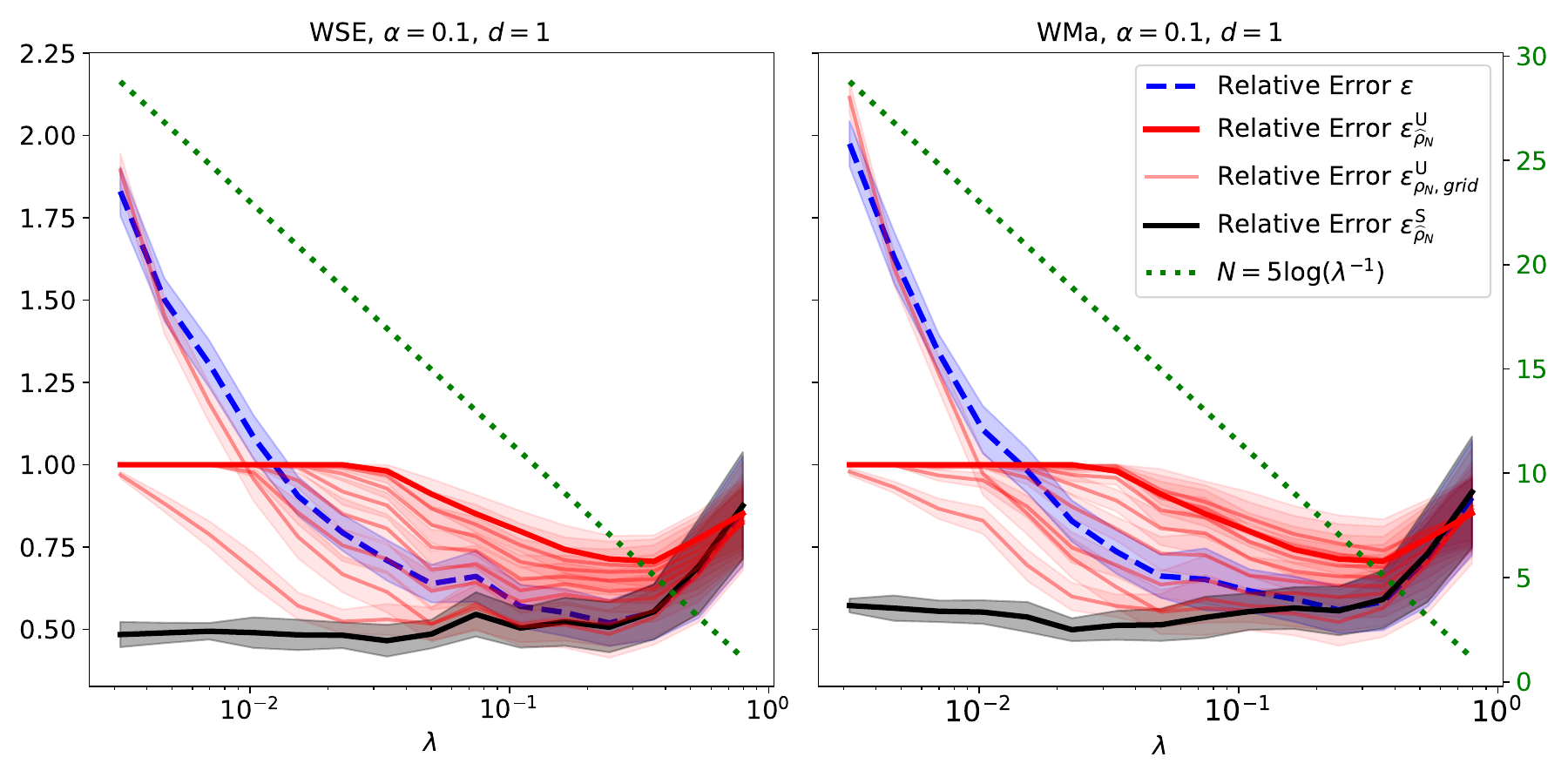}
    \caption{
    Plots of the average relative errors and 95\% confidence intervals achieved by the sample ($\varepsilon$, dashed blue), universal thresholding ($\varepsilon^{\mathsf{U}}_{\hat{\rho}_N}$, red), universal thresholding with data-driven radius ($\varepsilon^{\mathsf{U}}_{\rho_{N, \text{grid}}}$, pink) and sample-based adaptive thresholding ($\varepsilon^{\mathsf{S}}_{\hat{\rho}_N}$, black) covariance estimators based on a sample size ($N$, dotted green) for the (weighted) squared exponential (left) and (weighted) Matérn (right) covariance functions with $\alpha=0.1$ in $d=1$ over 30 Monte-Carlo trials and 30 scale parameters $\lambda$ ranging from $10^{-2.5}$ to $10^{-0.1}$.
    }
    \label{fig:d1MultipleUniversalResults}
    % plot generated using AdaptiveThresholding_d=1.ipynb.
\end{figure}
Next, in Figure~\ref{fig:d1MultipleUniversalResults} we study further the behavior of universal thresholding in the weighted setting with $\alpha=0.1$. As discussed in Remark~\ref{rem:UniversalThresholding}, the existing theory suggests to take the threshold radius to scale with $\sup_{x\in D}\sqrt{k_\lambda(x,x)} = e^{d/\lambda^\alpha}$, which becomes extremely large for small $\lambda$, and 
 causes the universal threshold estimator to behave 
 effectively like the zero estimator. 
 This choice is reflected by the error $\varepsilon^{\mathsf{U}}_{\hat{\rho}_N}$, which has relative error equal to 1 for small lengthscales. To further test the universal estimator, for each lengthscale we consider a grid of 10 
  thresholding
 radii ranging from 0 (corresponding to just using the sample covariance, with relative error $\varepsilon$) to the one suggested by the theory (corresponding to the theoretically suggested universal estimator, with relative error $\varepsilon^{\mathsf{U}}_{\hat{\rho}_N}$). The performance of these 10 estimators is represented in pink. It is clear from these results that regardless of the choice of thresholding radius, the universal estimator performs significantly worse than the adaptive estimator.

 The examples considered thus far possess a form of ordered sparsity in that the decay of the covariance function depends monotonically on the physical distance between its two arguments. Although this structure arises in many applications, it is not necessary for the success of thresholding-based estimators. In Figure~\ref{fig:isotropic}, we consider the performance of all estimators when the base kernel exhibits an unordered sparsity pattern. First, we study the periodic covariance function $\tildek^{\text{period}}$ given by
 \begin{align*}
    \tildek_\lambda^{\text{period}}(x,y) = \exp \inparen{-\frac{2 \sin^2(\pi \|x-y\|/\eta)}{\lambda^2}},
\end{align*}
where $\eta>0$ is the periodicity parameter. Intuitively, the periodic covariance function is composed of $\lfloor 1/\eta \rfloor$ bumps spaced uniformly over the domain, each behaving locally like $\tildek^{\text{SE}}_\lambda$. Consequently, this kernel is not monotonically decreasing, but it becomes sparser with smaller $\lambda.$ As another example, we consider the squared-exponential kernel applied to a random permutation of the underlying discretized grid. Shuffling the data breaks the spatial ordering while maintaining the same level of sparsity. For both periodic kernel and shuffled data examples, we choose $\sigma_\lambda$ according to Assumption~\ref{ass:WeightFunction} with $\alpha = 0.1$ and consider 30 scale parameters $\lambda$ ranging from $10^{-2.2}$ to $10^{-0.1}.$ The results demonstrate that adaptive thresholding is superior to both universal thresholding and sample covariance estimators. 
\begin{figure}
    \centering
    \includegraphics[scale=0.5]{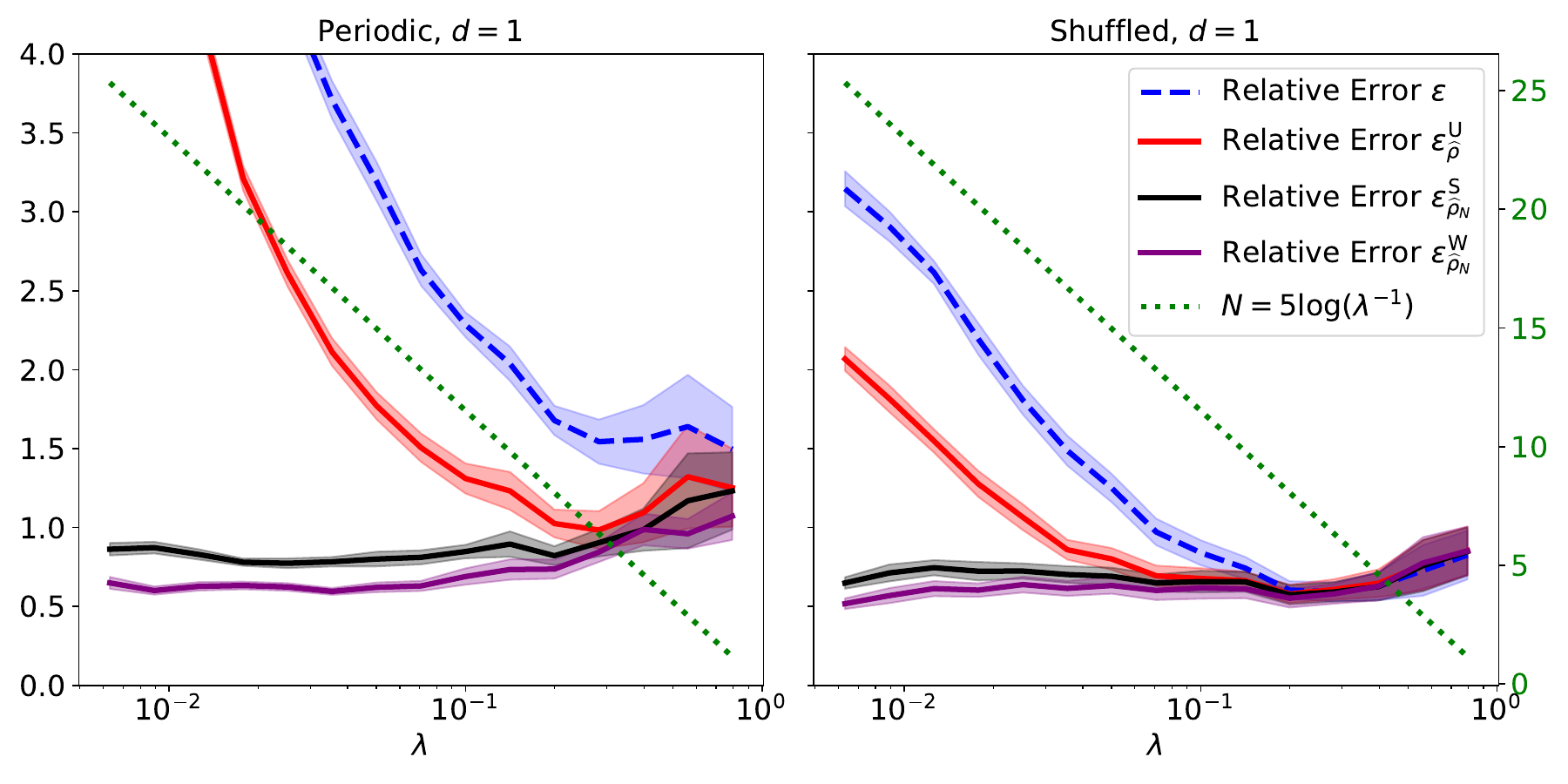}
    \caption{
    Plots of the average relative errors and 95\% confidence intervals achieved by the sample ($\varepsilon$, dashed blue), universal thresholding ($\varepsilon^{\mathsf{U}}_{\hat{\rho}_N}$, red), sample-based adaptive thresholding ($\varepsilon^{\mathsf{S}}_{\hat{\rho}_N}$, black) and Wick's adaptive thresholding ($\varepsilon^{\mathsf{W}}_{\hat{\rho}_N}$, purple) covariance estimators based on a sample size ($N$, dotted green) for the periodic kernel (left) and shuffled kernel (right) in $d=1$ over 30 Monte-Carlo trials and 30 scale parameters $\lambda$ ranging from $10^{-2.2}$ to $10^{-0.1}$.
    }
    \label{fig:isotropic}
\end{figure}

Although Theorem~\ref{thm:CompareAdaptiveAndSampleCov} holds for Gaussian data, we investigate numerically here the behavior of all estimators on sub-Gaussian data in the small lengthscale regime. Given two independent centered Gaussian processes $v^{(1)}, v^{(2)}$, with covariance functions $k^1 = k^2$ both satisfying Assumption~\ref{ass:isotropicandlengthscale}, we define $u^{(1)} := |v^{(1)}|- \E|v^{(1)}|$ and $u^{(2)} := (|v^{(1)}|-\E|v^{(1)}|) \sin(v^{(2)}).$ These transformations ensure the resulting processes are sub-Gaussian (technical details are deferred to \ref{app:subgCalcs}). The true covariance matrices are given respectively by
\begin{align*}
    C^{ij}_1 
    &=\frac{2 \sigma_\lambda(x_i) \sigma_\lambda(x_j)}{\pi} 
    \inparen{ \sqrt{1-\tildek_\lambda^2(x_i, x_j)}
    + \tildek_\lambda(x_i,x_j) \sin^{-1} (\tildek_\lambda(x_i,x_j))
    }, \\
    C^{ij}_2 &=
    C^{ij}_1 \times 
    e^{-\frac{1}{2}(\sigma_\lambda^2(x_i)+\sigma_\lambda^2(x_j))} \sinh(\sigma_\lambda(x_i) \sigma_\lambda(x_j) \tildek_\lambda(x_i,x_j)),\quad 1 \le i,j \le L.
\end{align*}
As in Figure~\ref{fig:d1MultipleUniversalResults}, we consider a grid of 10 threshold radii for each estimator, and for each lengthscale we choose the threshold that gives the smallest average relative error to generate the series in the figure. Throughout we choose $\sigma_\lambda$ according to Assumption~\ref{ass:WeightFunction} with $\alpha=0.1,$ and $\tildek = \tildek^{\text{Ma}}$. Similar results hold in the case $\tildek = \tildek^{\text{SE}}.$ The results are presented in Figure~\ref{fig:subgaussian}, with both adaptive estimators significantly beating out the sample covariance and universal threshold estimators. The empirical results suggest that the theoretical bound in Theorem~\ref{thm:CompareAdaptiveAndSampleCov} potentially continues to hold beyond the Gaussian setting. We leave a theoretical investigation of this extension to future work.

\begin{figure}[htp]
    \centering
    \includegraphics[scale=0.5]{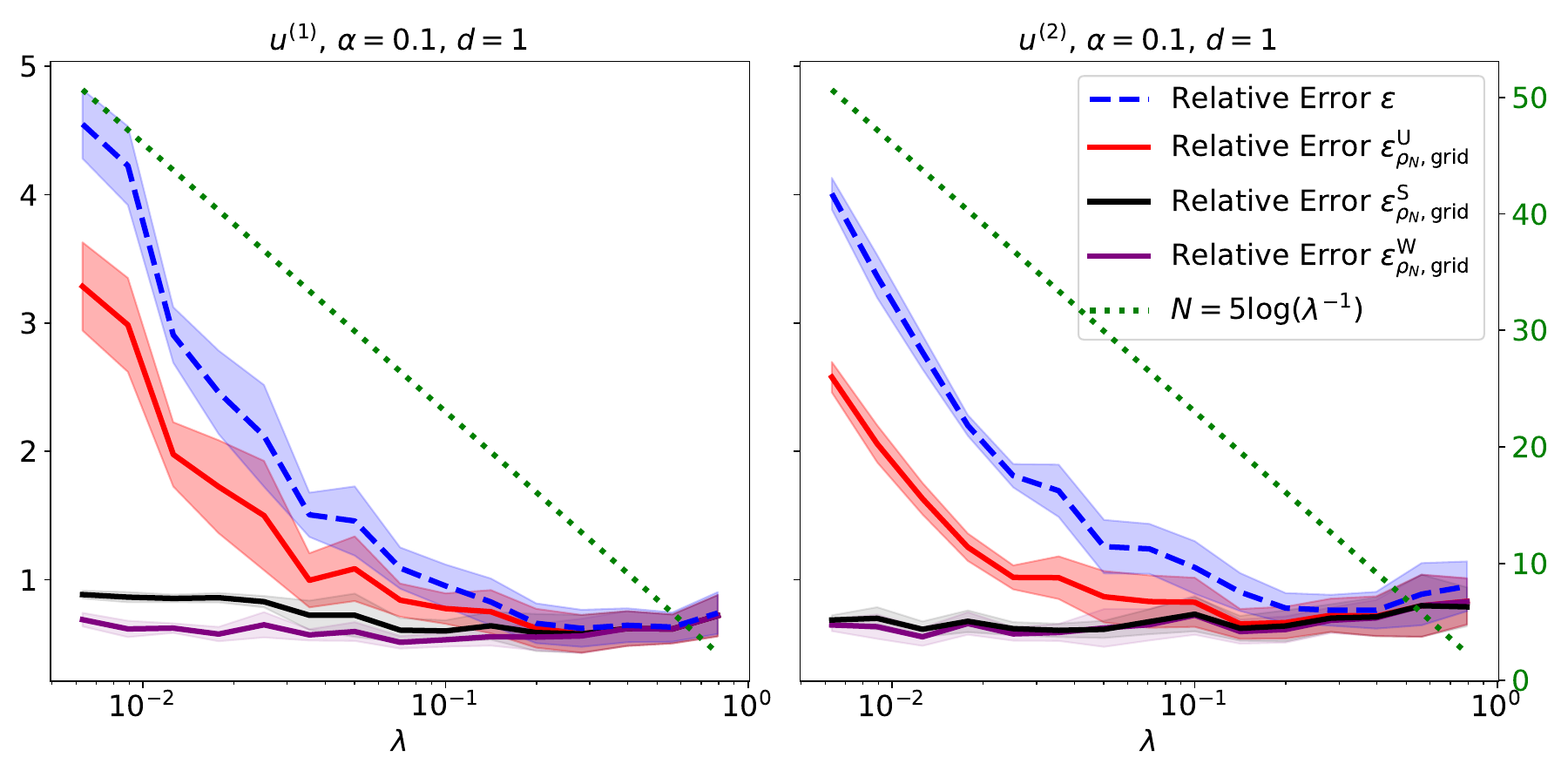}
    \caption{
    Plots of the average relative errors and 95\% confidence intervals achieved by the sample ($\varepsilon$, dashed blue), universal thresholding with data-driven radius ($\varepsilon^{\mathsf{U}}_{\rho_{N, \text{grid}}}$, pink), sample-based adaptive thresholding with data-driven radius ($\varepsilon^{\mathsf{S}}_{\rho_{N, \text{grid}}}$, black) and 
    Wick's-based adaptive thresholding with data-driven radius ($\varepsilon^{\mathsf{W}}_{\rho_{N, \text{grid}}}$, purple) covariance estimators based on a sample size ($N$, dotted green) for the sub-Gaussian processes $u^{(1)}$ (left) and $u^{(2)}$ (right). For each data-driven estimator and for each $\lambda$, $\rho_N$ is chosen as the error minimizing radius from a set of radii ranging from zero to the choice suggested by the theory in the Gaussian setting. The results are carried out in $d=1$ over 30 Monte-Carlo trials and 30 scale parameters $\lambda$ ranging from $10^{-2.2}$ to $10^{-0.1}$.
    }
    \label{fig:subgaussian}
\end{figure}

\begin{remark}\label{rem:WicksInSubGSetting}
    In all of our numerical experiments, the Wick's-based estimator exhibits strong performance at the level of and even superior to that of the sample-based estimator. While our theory suggests that the two estimators have the same rate of convergence, it does not preclude differences owing to the choice of pre-factor $c_0$ in the choice of sample size. The results therefore indicate that the Wick's-based estimator is more robust to smaller choices of this pre-factor. In the Gaussian setting, this is expected. Recall that Wick's theorem states that for a centered multivariate Gaussian vector $X:=(X_1,\dots,X_M),$ then $\E[X_1X_2\cdots X_M] = \sum_{\pi \in \Pi_M^2} \prod_{\{i,j\} \in \pi} \tcov(X_i, X_j),$ where $\Pi_M^2$ is the set of all partitions of $\{1,\dots, M\}$ of length 2. Therefore, in contrast to the sample-based estimator, the Wick's-based estimator avoids having to compute empirical higher order moments which plausibly leads to a more stable estimator for any given sample size. The situation in the sub-Gaussian setting depicted in Figure~\ref{fig:subgaussian} is somewhat more surprising, given that the Wick's-based estimator is only theoretically justified for Gaussian data. While a rigorous explanation of this phenomena is well beyond the scope of this work, we offer here some intuition as to why the Wick's-based estimator might be competitive even for sub-Gaussian data. A generalization of Wick's theorem to non-Gaussian data given in \cite{leonov1959method} states that, whenever the joint moment exists, $\E[X_1X_2\cdots X_M] = \sum_{\pi \in \Pi_M} \prod_{ a \in \pi} \kappa ((X_m)_{m \in a}),$ where $\Pi_M$ is the set of all partitions of $\{1,\dots, M\}$, and $\kappa ((X_m)_{m \in a})$ is the joint cumulant of the subset $(X_m)_{m \in a}.$ Therefore, one must estimate all higher-order cumulants as opposed to the Gaussian case in which second-order cumulants suffice. Recall that the cumulants are the coefficients in the Taylor series expansion of the cumulant (log-moment) generating function of $X$, $\psi(\gamma):=\log \E e^{\inp{\gamma, X}},$ which for sub-Gaussian $X$ is bounded above by $c \| \gamma \|^2_2$ for a positive universal constant $c.$ In order for this to be true, terms of cubic and higher-order cannot be too large.  Consequently, higher-order cumulants (third-order and above) cannot be too large. With this in mind, the Wick-based estimator can be interpreted as a type of penalized estimator that effectively treats these small higher-order cumulants as negligible by approximating them with zero.
\end{remark}

Notice that in Figures~\ref{fig:d1Results}, \ref{fig:d1MultipleUniversalResults}, \ref{fig:isotropic} and \ref{fig:subgaussian} thresholding seems to increase the relative error for large $\lambda$.  We note, however, that our theory holds only in the small $\lambda$ regime, and consequently the behavior for large $\lambda$ is not captured. We also note that the increase in error may be solely due to the very small sample size used for large values of $\lambda$.

\section{Error Analysis for Adaptive-threshold Estimator}\label{sec:ErrorAnalysisAdaptiveEstimator}
In this section, we prove our first main result, Theorem~\ref{thm:ThresholdOpNormBoundwSampleRho}. The proof structure is similar to that for the study of adaptive covariance matrix estimation in \cite{cai2012adaptive}, but our proof techniques differ in a number of important ways. Chiefly, our results are nonasymptotic and dimension free, owing to our use of recent theory on suprema of product empirical processes put forward in \cite{mendelson2016upper} and described in detail in Section~\ref{sec:ProdEmpiricalProcess}. This new approach allows us to prove Lemmas~\ref{lem:SampleCovFnConcentration} and \ref{lem:SampleCovFnSquaredConcentration}, which provide dimension-free control of the sample covariance and sample variance component. Building on these dimension-free bounds, we show five technical results, Lemmas~\ref{lem:AuxLem1}, \ref{lem:AuxLem2}, \ref{lem:AuxLem3}, \ref{lem:SampleRhoNConcentration}, and \ref{lem:AuxLem4} that are the key building blocks of the proof of the main result. Throughout, we denote the normalized versions of $u, u_1, \ldots, u_N$ by
\begin{equation}\label{eq:standardized}
    \tildeu(\cdot) := \frac{u(\cdot)}{\sqrt{k(\cdot, \cdot)}}, \qquad \tildeu_n(\cdot) := \frac{u_n(\cdot)}{\sqrt{k(\cdot, \cdot)}}, \qquad 1 \le n \le N.
\end{equation}
We further denote the Gaussian processes associated to $u, \tildeu$ by $v, \tildev$ respectively.

\begin{lemma} \label{lem:AuxLem1}
Under Assumption~\ref{ass:mainAssumption}, it holds with probability at least $1-2 e^{-(\E[\sup_{x \in D} \tildev(x)])^2}$ that
\begin{align*}
    \sup_{x,y \in D}
    \abs{
    \frac{\hattheta(x,y)-\theta(x,y)}{\theta(x,y)}
    } 
    \lesssim \frac{\E[\sup_{x \in D} \tildev(x)]}{ \nu \sqrt{N}},
\end{align*}
where $\hattheta \in \{\hattheta_{\mathsf{S}},\hattheta_{\mathsf{W}}\}$ in the Gaussian setting, and $\hattheta = \hattheta_{\mathsf{S}}$ otherwise.
\end{lemma}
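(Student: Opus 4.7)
The plan is to first pass to the \emph{normalized} error
\[
    \sup_{x,y \in D} \frac{|\hattheta(x,y) - \theta(x,y)|}{k(x,x)\,k(y,y)},
\]
using Assumption~\ref{ass:mainAssumption}~(ii), which provides $\theta(x,y) \ge \nu\, k(x,x)\,k(y,y)$ and thereby accounts for the $\nu^{-1}$ factor in the stated bound. The remainder of the argument treats the Wick's-based and sample-based estimators separately, in each case reducing the analysis to dimension-free sup-norm concentration results for empirical processes of the standardized data $\tildeu_n$, namely Lemmas~\ref{lem:SampleCovFnConcentration} and \ref{lem:SampleCovFnSquaredConcentration}.

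For the Wick's-based estimator (Gaussian case), I would use the Isserlis identity $\theta(x,y) = k(x,x)k(y,y) + k^2(x,y)$ already derived in the excerpt to obtain
\[
    \hattheta_{\mathsf{W}} - \theta = \bigl[\hatk(x,x)\hatk(y,y) - k(x,x)k(y,y)\bigr] + \bigl[\hatk^2(x,y) - k^2(x,y)\bigr].
\]
After dividing by $k(x,x)k(y,y)$, the first bracket becomes a sum of the two standardized diagonal errors $(\hatk(x,x)-k(x,x))/k(x,x)$ and $(\hatk(y,y)-k(y,y))/k(y,y)$ plus a quadratic cross term, while the second bracket factors as $[(\hatk-k)/\sqrt{k(x,x)k(y,y)}]\cdot[(\hatk+k)/\sqrt{k(x,x)k(y,y)}]$, whose second factor is bounded in sup-norm by $2$ plus the standardized sample-covariance error (using Cauchy--Schwarz, $|k(x,y)|\le \sqrt{k(x,x)k(y,y)}$). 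Lemma~\ref{lem:SampleCovFnConcentration} then produces a sup-norm bound of order $\E[\sup_{x\in D}\tildev(x)]/\sqrt{N}$ on the standardized sample-covariance error, and Assumption~\ref{ass:mainAssumption}~(iii) guarantees that this small quantity dominates its own square, so the quadratic cross terms are of lower order.

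For the sample-based estimator, using $\theta(x,y) = \E[u^2(x)u^2(y)] - k^2(x,y)$, I split
\[
    \hattheta_{\mathsf{S}} - \theta = \Bigl(\tfrac{1}{N}\sum_{n=1}^N u_n^2(x)u_n^2(y) - \E[u^2(x)u^2(y)]\Bigr) - \bigl(\hatk^2(x,y) - k^2(x,y)\bigr).
\]
The second term is handled exactly as in the Wick case via Lemma~\ref{lem:SampleCovFnConcentration}. The first term, after normalizing by $k(x,x)k(y,y) = \E[\tildeu^2(x)]\E[\tildeu^2(y)]$, becomes a centered fourth-order multi-product empirical process indexed by $(x,y)\in D\times D$ in the standardized sample; this is precisely the object controlled in sup-norm by Lemma~\ref{lem:SampleCovFnSquaredConcentration}, giving the same $\E[\sup_{x\in D}\tildev(x)]/\sqrt{N}$ rate. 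Combining the two contributions via a union bound (which accounts for the factor $2$ in the probability) and folding $\nu^{-1}$ into the constant completes the argument.

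The main obstacle is the fourth-order term appearing in the sample-based estimator. A naive moment-based analysis would produce a rate depending on the ambient (discretization) dimension; obtaining the dimension-free rate in terms of $\E[\sup_{x\in D}\tildev(x)]/\sqrt{N}$ requires the product empirical process machinery of \cite{al2025sharp} reviewed in Section~\ref{sec:ProdEmpiricalProcess}, which is invoked inside Lemma~\ref{lem:SampleCovFnSquaredConcentration}. The Wick's-based estimator sidesteps this difficulty by expressing the variance component as a polynomial in the covariance function itself, so only the second-order empirical process bound of Lemma~\ref{lem:SampleCovFnConcentration} is needed.
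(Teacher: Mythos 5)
Your proposal is correct and follows essentially the same route as the paper's proof: normalize by $k(x,x)k(y,y)$ via Assumption~\ref{ass:mainAssumption}~(ii), split $\hattheta_{\mathsf{S}}-\theta$ (resp. $\hattheta_{\mathsf{W}}-\theta$) into the $\hatk^2-k^2$ term and the fourth-order (resp. diagonal-product) term, control these via Lemmas~\ref{lem:SampleCovFnConcentration} and \ref{lem:SampleCovFnSquaredConcentration} with $t=(\E[\sup_{x\in D}\tildev(x)])^2$, absorb the quadratic terms using Assumption~\ref{ass:mainAssumption}~(iii), and conclude with a union bound. Your factorization $\hatk^2-k^2=(\hatk-k)(\hatk+k)$ is just a cosmetic variant of the paper's identity $a^2-b^2=(a-b)^2+2b(a-b)$.
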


\begin{proof}
    We consider first $\hattheta_{\mathsf{S}}$. Assumption~\ref{ass:mainAssumption} (ii) implies that, for any $x,y \in D,$
    \begin{align*}
        \abs{
        \frac{\hattheta_{\mathsf{S}}(x,y)-\theta(x,y)
        }{\theta(x,y)}
        }
        &=
        \abs{ 
        \frac{k^2(x,y) - \hatk^2(x,y)
        +
        \frac{1}{N} \sum_{n=1}^N 
        u_n^2(x)u_n^2(y) 
        -
        \E [u^2(x)u^2(y)]
        }{\theta(x,y)}}
        \\
        &\le 
        \frac{
        \abs {\hatk^2(x,y)- k^2(x,y)}}
        {\nu k(x,x)k(y,y)}
        +
        \frac{
        \abs{
        \frac{1}{N} \sum_{n=1}^N 
        u_n^2(x)u_n^2(y) 
        -
        \E [u^2(x)u^2(y)]
        }}{\nu k(x,x)k(y,y)}\\
        &=: I^{\mathsf{S}}_1 + I^{\mathsf{S}}_2.
    \end{align*}

    \textit{Controlling $I^{\mathsf{S}}_1$:} Note that for constants $a,b$, we have 
    \begin{align*}
        a^2 - b^2 = (a-b)(a+b) = (a-b)(a-b+2b) = (a-b)^2 + 2b(a-b).
    \end{align*}
    Therefore, 
    \begin{align*}
        I^{\mathsf{S}}_1
        =
        \frac{
        \abs {\hatk^2(x,y)- k^2(x,y)}}
        {\nu k(x,x)k(y,y)}
        &\le 
        \frac{1}{\nu}
        \abs{
        \frac{\hatk(x,y) - k(x,y)}{ \sqrt{k(x,x)k(y,y)}}
        }^2
        +2 |k(x,y)| \abs{
        \frac{\hatk(x,y) - k(x,y)}{\nu k(x,x)k(y,y)}
        }\\
        &\le 
        \frac{1}{\nu} 
        \abs{
        \frac{\hatk(x,y) - k(x,y)}{\sqrt{k(x,x)k(y,y)}}
        }^2
        +\frac{2}{\nu} \abs{
        \frac{\hatk(x,y) - k(x,y)}{\sqrt{k(x,x)k(y,y)}}
        },
    \end{align*}
    where we have used that $|k(x,y)| \le \sqrt{k(x,x)k(y,y)}$ by Cauchy-Schwarz. On the event $\Omega_{t}^{(1)}$ defined in Lemma~\ref{lem:SampleCovFnConcentration} it holds that, for all $x,y \in D,$
    \begin{align*}
        I^{\mathsf{S}}_1 
        \lesssim
        \frac{1}{\nu} 
        \inparen{\sqrt{\frac{t}{N}} \lor \frac{t^2}{N} \lor 
        \frac{\E[\sup_{x\in D} \tildev(x)]}{\sqrt{N}} \lor \frac{(\E[\sup_{x\in D} \tildev(x)])^4}{N}}.
    \end{align*}

    \textit{Controlling $I^{\mathsf{S}}_2$:} On the event $\Omega_{t}^{(2)}$ defined in Lemma~\ref{lem:SampleCovFnSquaredConcentration} it holds that, for all $x,y \in D$,
    \begin{align*}
        I^{\mathsf{S}}_2 
        \lesssim 
        \frac{1}{\nu} 
        \inparen{
        \sqrt{\frac{t}{N}} \lor \frac{t}{N} \lor \frac{\E[\sup_{x\in D} \tildev(x)]}{\sqrt{N}} \lor \frac{(\E[\sup_{x\in D} \tildev(x)])^2}{N}}.
    \end{align*}
    Therefore, by Assumption~\ref{ass:mainAssumption} (iii) and choosing $t = (\E[\sup_{x\in D} \tildev (x)])^2$, we have
    \begin{align*}
        I^{\mathsf{S}}_1 + I^{\mathsf{S}}_2 \lesssim 
        \frac{1}{\nu} 
        \inparen{
        \sqrt{\frac{t}{N}} \lor \frac{t^2}{N} \lor \frac{\E[\sup_{x\in D} \tildev(x)]}{\sqrt{N}} \lor \frac{(\E[\sup_{x\in D} \tildev(x)])^4}{N}
        }
        \lesssim
        \frac{1}{\nu} \frac{\E[\sup_{x\in D} \tildev(x)]}{\sqrt{N}}.
    \end{align*}
    The proof is completed by noting that the event $A_t := \Omega_{t}^{(1)} \cap \Omega_{t}^{(2)}$ has probability at least $1-2e^{-t}$ by Lemmas~\ref{lem:SampleCovFnConcentration} and \ref{lem:SampleCovFnSquaredConcentration}.

    Next, for $\hattheta_{\mathsf{W}},$ Assumption~\ref{ass:mainAssumption} (ii) implies that, for any $x,y \in D,$
    \begin{align*}
        \abs{
        \frac{\hattheta_{\mathsf{W}}(x,y)-\theta(x,y)
        }{\theta(x,y)}
        }
        &\le 
        \frac{|\hatk^2(x,y) - k^2(x,y)|}{\nu k(x,x)k(y,y)}
        +
        \abs{
        \frac{\hatk(x,x)\hatk(y,y) - k(x,x)k(y,y) 
        }{\theta(x,y)}} 
        =: I^{\mathsf{W}}_1 + I^{\mathsf{W}}_2.
    \end{align*}

    \textit{Controlling $I^{\mathsf{W}}_1$:} 
    Since $I^{\mathsf{W}}_1 = I^{\mathsf{S}}_1$, it follows that $I^{\mathsf{W}}_1 \lesssim \frac{\E[\sup_{x\in D} \tildev(x)]}{\nu \sqrt{N}}$ with probability at least $1 -e^{-(\E[\sup_{x \in D} \tildev(x)])^2}$.
    \textit{Controlling $I^{\mathsf{W}}_2$:} 
    Writing
    \begin{align*}
        \hatk(x,x)\hatk(y,y) - k(x,x)k(y,y)
        &= (\hatk(x,x)-k(x,x))(\hatk(y,y)-k(y,y))\\ 
        &+ (\hatk(x,x)-k(x,x))k(y,y) + (\hatk(y,y)-k(y,y))k(x,x),
    \end{align*}
    and by Assumption~\ref{ass:mainAssumption}, we have that, for any $x,y \in D,$
    \begin{align*}
        I^{\mathsf{W}}_2
        &\le         
        \abs{\frac{(\hatk(x,x) - k(x,x))(\hatk(y,y) - k(y,y)) }{\nu k(x,x)k(y,y)}} \\
        &+
        \abs{\frac{k(y,y) ( \hatk(x,x) - k(x,x)) }{\nu k(x,x)k(y,y)}} 
        + 
        \abs{\frac{k(x,x)(\hatk(y,y) - k(y,y))}{\nu k(x,x)k(y,y)}}\\
        &\le 
        \frac{1}{\nu} \sup_{x\in D} \abs{\frac{\hatk(x,x) - k(x,x) }{k(x,x)}}^2
        + \frac{2}{\nu} \sup_{x \in D}
        \abs{\frac{\hatk(x,x) - k(x,x) }{k(x,x)}} 
        =: \frac{1}{\nu} I^2_{21}+ \frac{2}{\nu} I^{\mathsf{W}}_{21}.
    \end{align*}
    Define the event $A :=\inbraces{ I^{\mathsf{W}}_{21} \lesssim \frac{\E[\sup_{x\in D} \tildev(x)]}{\sqrt{N} } },$ and note that on $A$,  
    \begin{align*}
        I^{\mathsf{W}}_2 
        & 
        \lesssim
        \frac{1}{\nu}
        \inparen{
         \frac{\E[\sup_{x\in D} \tildev(x)]}{\sqrt{N}}
        }^2 + \frac{2}{\nu} \inparen{\frac{\E[\sup_{x\in D} \tildev(x)]}{\sqrt{N}}} \lesssim
        \frac{\E[\sup_{x\in D} \tildev(x)]}{\nu \sqrt{N}}.
    \end{align*}
    By Lemma~\ref{lem:SampleCovFnConcentration} and Assumption~\ref{ass:mainAssumption} (iii), we have that $\P(A) \ge 1-e^{- (\E[\sup_{x \in D} \tildev(x)])^2}.$
\end{proof}

\begin{lemma}\label{lem:SampleCovFnConcentration}
    For any $t \ge 1$, define $\Omega^{(1)}_t$ to be the event on which 
    \begin{align*}
        \sup_{x,y\in D} 
        \abs{ \frac{\hatk(x,y) - k(x,y)}{\sqrt{k(x,x) k(y,y)}} }
        &\lesssim 
            \sqrt{\frac{t}{N}}
            \lor 
            \frac{t}{N}
            \lor 
            \frac{\E[\sup_{x \in D} \tildev(x)]}{\sqrt{N}}
            \lor 
            \frac{ (\E[\sup_{x \in D} \tildev(x)])^2}{N}.
    \end{align*}
    Then, it holds that $\P(\Omega^{(1)}_t) \ge 1-e^{-t}.$ 
\end{lemma}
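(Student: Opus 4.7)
The plan is to view the normalized centered sample covariance as a 2-product empirical process indexed by the normalized function class $\mcF := \{\tildeu(x) : x \in D\}$, and then invoke the high-probability product empirical process bound described in Section~\ref{sec:ProdEmpiricalProcess} and established in \cite{al2025sharp, mendelson2016upper}. The first step is to rewrite
$$\frac{\hatk(x,y) - k(x,y)}{\sqrt{k(x,x)k(y,y)}} \;=\; \frac{1}{N}\sum_{n=1}^N \bigl(\tildeu_n(x)\tildeu_n(y) \;-\; \E[\tildeu(x)\tildeu(y)]\bigr),$$
so that taking the supremum over $(x,y) \in D \times D$ is precisely the supremum of a centered $2$-product empirical process indexed by $\mcF \times \mcF$.

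Next, because $u$ is sub-Gaussian and pre-Gaussian with $\E\,\tildeu^2(x) = 1$ for every $x \in D$, the class $\mcF$ is uniformly sub-Gaussian, and by the majorizing-measure theorem its Talagrand $\gamma_2$-functional (with respect to the induced $L^2$ metric) is equivalent, up to universal constants, to $\E[\sup_{x \in D}\tildev(x)]$, where $\tildev$ is the Gaussian process associated to $\tildeu$. I would then apply the high-probability deviation bound for $2$-product sub-Gaussian empirical processes: its in-expectation part scales like $\gamma_2(\mcF)/\sqrt{N}$ plus the quadratic correction $\gamma_2(\mcF)^2/N$ arising from the product structure of unbounded integrands, while the Talagrand-type deviation contribution at confidence level $1-e^{-t}$ adds the familiar $\sqrt{t/N}$ and $t/N$ terms. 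Substituting $\gamma_2(\mcF) \asymp \E[\sup_{x \in D}\tildev(x)]$ then yields, up to absolute constants, the four-term bound in the statement.

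The main obstacle is justifying the product empirical process bound rigorously in the \emph{unbounded} sub-Gaussian regime: the classical Talagrand deviation inequality presumes a uniformly bounded integrand, whereas $\tildeu(x)\tildeu(y)$ is only sub-exponential. This is precisely why the sharper bound from \cite{al2025sharp}, which combines generic chaining with a truncation argument, is needed here. Verifying its hypotheses for the normalized class $\mcF$ is the technical heart of the argument: the pre-Gaussian assumption in Assumption~\ref{ass:mainAssumption} guarantees the associated Gaussian process $\tildev$ exists with the appropriate covariance, and Assumption~\ref{ass:mainAssumption}~(iii) can be used, if desired, to absorb mixed cross-terms of order $\gamma_2(\mcF)\sqrt{t/N}/\sqrt{N}$ into the four terms already listed via $2ab \le a^2 + b^2$. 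Once these verifications are in place, the lemma follows by direct substitution.
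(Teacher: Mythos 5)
Your proposal is correct and follows essentially the same route as the paper: rewrite the normalized deviation as the product empirical process $\frac{1}{N}\sum_n \tildeu_n(x)\tildeu_n(y)-\E[\tildeu(x)\tildeu(y)]$, control $\gamma_2$ of the normalized evaluation class by $\E[\sup_{x\in D}\tildev(x)]$ via pre-Gaussianity and majorizing measures, and invoke the sharp unbounded (sub-exponential integrand) bound from \cite{al2025sharp}. The only cosmetic difference is that the paper channels this through Lemma~\ref{lem:sub-GaussianStandardizedProductProcessGeneral}, which uses the polarization identity $ab=\tfrac12(a^2+b^2-(a-b)^2)$ to reduce the pair-indexed product process to the order-$2$ power process of Theorem~\ref{thm:multiproductemp} (with $d_{\psi_2}(\mcF)\lesssim 1$ and the subadditivity Lemma~\ref{lem:subadditivegamma}), whereas you invoke a product-process bound directly; both yield the same four-term estimate.
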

\begin{proof}
    The result follows by invoking Lemma~\ref{lem:sub-GaussianStandardizedProductProcessGeneral} after noting that, for any $x,y \in D,$
    \begin{align*}
        \abs{ \frac{\hatk(x,y) - k(x,y)}{\sqrt{k(x,x) k(y,y)}} }
        &=
        \abs{ 
        \frac{1}{N} 
        \sum_{n=1}^n \frac{u_n(x)}{\sqrt{\E[u_n^2(x)]}}\frac{u_n(y)}{\sqrt{\E[u_n^2(y)]}}
        -
        \E \insquare{
        \frac{u(x)}{\sqrt{\E[u^2(x)]}}\frac{u(y)}{\sqrt{\E[u^2(y)]}}
        }
        }\\
        &= 
        \abs{
        \frac{1}{N} \sum_{n=1}^N
        \tildeu_n(x)\tildeu_n(y) - \E[\tildeu(x)\tildeu(y)] 
        }.
    \end{align*}
\end{proof}

\begin{lemma}\label{lem:SampleCovFnSquaredConcentration}
        For any $t \ge 1$, define $\Omega^{(2)}_t$ to be the event on which 
        \begin{align*}
            &\sup_{x,y \in D}
            \abs{\frac{
        \frac{1}{N} \sum_{n=1}^N u_n^2(x)u_n^2(y) 
        -
        \E [u^2(x)u^2(y)]
        }{k(x,x)k(y,y)} }\\
        &\hspace{4cm} \lesssim 
            \sqrt{\frac{t}{N}}
            \lor 
            \frac{t^2}{N}
            \lor 
            \frac{\E[\sup_{x \in D} \tildev(x)]}{\sqrt{N}}
            \lor 
            \frac{(\E[\sup_{x \in D} \tildev(x)])^4}{N}.
        \end{align*}
       Then, it holds that $\P(\Omega^{(2)}_t) \ge 1-e^{-t}.$ 
    \end{lemma}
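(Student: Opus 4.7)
The plan is to recognize the statement as a four-fold product empirical process bound and reduce it to a higher-order analogue of Lemma~\ref{lem:sub-GaussianStandardizedProductProcessGeneral} from \cite{al2025sharp}, mirroring the strategy used to prove the preceding Lemma~\ref{lem:SampleCovFnConcentration}. First I would rewrite the normalized quantity using the standardized processes defined in \eqref{eq:standardized}:
\begin{align*}
\frac{\frac{1}{N}\sum_{n=1}^N u_n^2(x) u_n^2(y) - \E[u^2(x)u^2(y)]}{k(x,x)k(y,y)}
=
\frac{1}{N}\sum_{n=1}^N \tildeu_n^2(x)\tildeu_n^2(y) - \E\bigl[\tildeu^2(x)\tildeu^2(y)\bigr],
\end{align*}
so that taking a supremum over $x,y \in D$ is precisely the sup-norm of a fourth-order product empirical process indexed by pairs of points in $D$, with each factor drawn from the single-variable class $\{\tildeu(x):x\in D\}$.

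Next I would invoke the four-product analogue of the multi-product empirical process inequality discussed in Section~\ref{sec:ProdEmpiricalProcess} (cf.\ \cite{al2025sharp, mendelson2016upper}). For a $k$-fold product empirical process with sub-Gaussian, pre-Gaussian factors sharing a common index set, that inequality controls the uniform deviation (up to universal constants) by
\begin{align*}
\sqrt{\tfrac{t}{N}} \lor \tfrac{t^{k/2}}{N} \lor \tfrac{\gamma(T)}{\sqrt{N}} \lor \tfrac{\gamma(T)^k}{N},
\end{align*}
where $\gamma(T)$ denotes the Talagrand $\gamma_2$-functional of the single index class $T=\{\tildeu(x):x\in D\}$. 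By Talagrand's majorizing measures theorem, $\gamma_2(T)$ is equivalent, up to universal constants, to $\E\sup_{x\in D}\tildev(x)$ for the Gaussian process $\tildev$ associated to $\tildeu$; substituting $k=4$ yields exactly the claimed rate.

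The main obstacle lies in passing from the index set $D\times D$ of the four-fold product class to the single index set $D$: this amounts to showing that the $\gamma_2$-complexity of the product class is controlled, up to universal constants, by that of $T$, which in turn is guaranteed by a product-chaining argument that factors the metric on $D\times D$ and exploits the uniform sub-Gaussianity of $\tildeu(x)$ ensured by Assumption~\ref{ass:mainAssumption}. Verification that the fourfold products inherit the required sub-Gaussian and pre-Gaussian moment conditions proceeds exactly as in the proof of Lemma~\ref{lem:SampleCovFnConcentration}, the only substantive change being the degree $k=4$ in place of $k=2$; this shift is precisely what upgrades the linear tail term $t/N$ and the subsuming complexity term $(\E\sup\tildev)^2/N$ appearing in Lemma~\ref{lem:SampleCovFnConcentration} to the quadratic $t^2/N$ and quartic $(\E\sup\tildev)^4/N$ appearing here.
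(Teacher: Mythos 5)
Your overall route matches the paper's: normalize to $\tildeu_n$ as in \eqref{eq:standardized}, view the supremum as a fourth-order empirical process, and control its complexity by $\E[\sup_{x\in D}\tildev(x)]$ via Talagrand's majorizing measures theorem, which does deliver the claimed rate with the $t^2/N$ and $(\E[\sup\tildev])^4/N$ terms. The gap is in the middle step. The tool that is actually available (Theorem~\ref{thm:multiproductemp}, i.e.\ Theorem 2.2 of \cite{al2025sharp}) bounds $\sup_{f\in\mcF}\bigl|\frac1N\sum_n f^s(X_n)-\E f^s(X)\bigr|$ only for pure $s$-th powers of a \emph{single} function ranging over one class; it does not directly apply to the mixed product $\tildeu_n^2(x)\tildeu_n^2(y)$ with two free indices. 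Your proposed fix --- an unspecified ``four-product analogue'' over the index set $D\times D$ together with a ``product-chaining argument that factors the metric on $D\times D$'' --- is precisely the part that needs proof, and no such factorization result is established in the paper or in the cited reference in the form you need; in general the $\gamma_2$-functional of a product-indexed class does not reduce to that of the single-variable class without additional structure.

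The paper closes exactly this gap algebraically, via the polarization identity $a^2b^2=\tfrac{1}{12}\bigl((a+b)^4+(a-b)^4-2a^4-2b^4\bigr)$ applied with $a=\ell_x(\tildeu_n)$, $b=\ell_y(\tildeu_n)$. This converts the two-index product into pure fourth powers of functions in the class $\mcF=\{c_1\ell_x-c_2\ell_y:\ x,y\in D,\ c_1,c_2\in\{-1,0,1\}\}$, to which Theorem~\ref{thm:multiproductemp} with $s=4$ applies; $\gamma_2(\mcF,\psi_2)$ is then controlled by the containment $\mcF\subset\mcG-\mcG$ with $\mcG=\{c\,\ell_x\}$, the subadditivity result Lemma~\ref{lem:subadditivegamma}, and majorizing measures, giving $\gamma_2(\mcF,\psi_2)\lesssim\E[\sup_{x\in D}\tildev(x)]$. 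All of this is packaged in Lemma~\ref{lem:sub-ExponentialStandardizedProductProcessGeneral}, which the paper's one-line proof of the present lemma invokes after the same normalization you perform. So your rate and your endgame are right, but without the polarization step (or a proof of the product bound you posit) the argument is incomplete.
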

    
    \begin{proof}
    The result follows by invoking Lemma~\ref{lem:sub-ExponentialStandardizedProductProcessGeneral} after noting that, for any $x,y \in D,$
    \begin{align*}
            \abs{\frac{
        \frac{1}{N} \sum_{n=1}^N 
        u_n^2(x)u_n^2(y) 
        -
        \E [u^2(x)u^2(y)]
        }{k(x,x)k(y,y)}}
        &=
            \abs{
        \frac{1}{N} \sum_{n=1}^N 
        \tildeu_n^2(x) \tildeu_n^2(y) 
        -
        \E [\tildeu^2(x)\tildeu^2(y)]}.
    \end{align*}
\end{proof}

\begin{lemma}\label{lem:AuxLem2} 
    Under Assumption~\ref{ass:mainAssumption}, it holds with probability at least $1-2 e^{-(\E[\sup_{x \in D} \tildev(x)])^2}$ that
    \begin{align*}
        \sup_{x,y \in D} 
        \abs{\frac{\hattheta^{1/2}(x,y) - \theta^{1/2}(x,y)}{ \hattheta^{1/2}(x,y)}} 
        \lesssim 
        \frac{\E[\sup_{x \in D} \tildev(x)]}{ \nu \sqrt{N}},
    \end{align*}
    where $\hattheta \in \{\hattheta_{\mathsf{S}},\hattheta_{\mathsf{W}}\}$ in the Gaussian setting, and $\hattheta = \hattheta_{\mathsf{S}}$ otherwise.
\end{lemma}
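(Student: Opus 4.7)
The plan is to deduce Lemma~\ref{lem:AuxLem2} directly from Lemma~\ref{lem:AuxLem1} by an elementary algebraic manipulation, without invoking any additional empirical-process machinery. The core observation is that control on the \emph{relative} error of $\hattheta$ transfers, up to constants, to control on the relative error of $\hattheta^{1/2}$. The hard technical work has already been carried out in Lemma~\ref{lem:AuxLem1}, so here the only issue is to make the transfer rigorous and in particular to rule out pathological behavior when $\hattheta(x,y)$ gets close to zero.

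First, I would work on the event from Lemma~\ref{lem:AuxLem1}, which has probability at least $1-2e^{-(\E[\sup_{x\in D}\tildev(x)])^2}$, and on which the pointwise relative error
\[
r(x,y) \;:=\; \frac{\hattheta(x,y)-\theta(x,y)}{\theta(x,y)}
\]
satisfies $\sup_{x,y\in D}|r(x,y)| \lesssim \E[\sup_{x\in D}\tildev(x)]/(\nu\sqrt{N})$. By Assumption~\ref{ass:mainAssumption}~(iii), the sample size condition makes the right-hand side small (no larger than a prescribed absolute constant once the universal constant $c$ is chosen sufficiently small). In particular, one may arrange that $|r(x,y)|\le 1/2$ uniformly, which gives the two-sided comparison
\[
\tfrac{1}{2}\,\theta(x,y) \;\le\; \hattheta(x,y) \;\le\; \tfrac{3}{2}\,\theta(x,y),
\]
so that in particular $\hattheta^{1/2}(x,y) \ge \theta^{1/2}(x,y)/\sqrt{2}$.

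Next, I would use the identity
\[
\frac{\hattheta^{1/2}(x,y)-\theta^{1/2}(x,y)}{\hattheta^{1/2}(x,y)}
\;=\;
\frac{\hattheta(x,y)-\theta(x,y)}{\hattheta^{1/2}(x,y)\bigl(\hattheta^{1/2}(x,y)+\theta^{1/2}(x,y)\bigr)},
\]
obtained from $a-b = (a^2-b^2)/(a+b)$ with $a=\hattheta^{1/2}$, $b=\theta^{1/2}$. Combined with the lower bound $\hattheta^{1/2}(\hattheta^{1/2}+\theta^{1/2}) \ge \theta/\sqrt{2}$ from the previous step, this yields
\[
\left|\frac{\hattheta^{1/2}(x,y)-\theta^{1/2}(x,y)}{\hattheta^{1/2}(x,y)}\right|
\;\le\;
\sqrt{2}\,\left|\frac{\hattheta(x,y)-\theta(x,y)}{\theta(x,y)}\right|
\;=\;\sqrt{2}\,|r(x,y)|.
\]
Taking the supremum over $x,y\in D$ and invoking the bound from Lemma~\ref{lem:AuxLem1} finishes the proof, with the same probability guarantee and the same asymptotic order as there.

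I do not anticipate a genuine obstacle in this argument; the only subtlety is ensuring that Assumption~\ref{ass:mainAssumption}~(iii) gives enough slack to keep $\hattheta$ uniformly bounded away from $0$ on the good event, which is straightforward once the universal constant in that assumption is calibrated. The argument applies uniformly to both $\hattheta_{\mathsf{S}}$ and $\hattheta_{\mathsf{W}}$ because Lemma~\ref{lem:AuxLem1} already delivers the required relative error bound in both the Gaussian and sub-Gaussian cases covered by the statement.
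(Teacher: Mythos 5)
Your proposal is correct and follows essentially the same route as the paper: work on the event of Lemma~\ref{lem:AuxLem1}, use the sample-size condition in Assumption~\ref{ass:mainAssumption}~(iii) to force the relative error of $\hattheta$ below $1/2$ (hence $\hattheta \asymp \theta$ pointwise), and transfer the bound to the square roots via the identity $a-b=(a^2-b^2)/(a+b)$. The only cosmetic difference is how the denominator is lower-bounded (you use $\hattheta^{1/2}(\hattheta^{1/2}+\theta^{1/2})\ge \theta/\sqrt{2}$, the paper bounds $\theta/\hattheta\le 2$), which changes nothing of substance.
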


\begin{proof}
     Define the event 
    \begin{align*}
        A
        := \inbraces{
            \sup_{x,y \in D} \abs{
            \frac{\hattheta(x,y) - \theta(x,y)}{\theta(x,y)}
            } \le \frac{\E[\sup_{x \in D} \tildev(x)]}{ \nu \sqrt{N}}
        }.
    \end{align*}
    It holds that $\P(A) \ge 1-2e^{-(\E[\sup_{x \in D} \tildev(x)])^2}$ by Lemma~\ref{lem:AuxLem1}. 
    Further note that the universal constant in Assumption~\ref{ass:mainAssumption} (iii) can be taken sufficiently small to ensure that $\frac{\E[\sup_{x \in D} \tildev(x)]}{ \nu \sqrt{N}} \le \frac{1}{2}$. Then on $A$, for any $x,y \in D,$
    \begin{align*}
        \abs{\frac{\hattheta^{1/2}(x,y) - \theta^{1/2}(x,y)}{ \hattheta^{1/2}(x,y)}}
        &=
        \abs{\frac{\hattheta^{1/2}(x,y) - \theta^{1/2}(x,y)}{ \hattheta^{1/2}(x,y)}
        \frac{\theta^{1/2}(x,y) + \hattheta^{1/2}(x,y) }{\theta^{1/2}(x,y) + \hattheta^{1/2}(x,y) }
        } \\
        &=\abs{\frac{\hattheta(x,y) - \theta(x,y)}{ \hattheta(x,y) + \theta^{1/2}(x,y)\hattheta^{1/2}(x,y)}}\\
        &=\abs{
        \frac{\hattheta(x,y) - \theta(x,y)}{ \theta(x,y)}
        \frac{\theta(x,y)}{ \hattheta(x,y) + \theta^{1/2}(x,y)\hattheta^{1/2}(x,y)}
        }\\
        &\le \abs{
        \frac{\hattheta(x,y) - \theta(x,y)}{ \theta(x,y)}
        \frac{\theta(x,y)}{ \hattheta(x,y)}}
        \le \abs{
        \frac{\hattheta(x,y) - \theta(x,y)}{ \theta(x,y)}}
        \abs{\frac{\theta(x,y)}{ \hattheta(x,y)}}\\
        &\le 
        2\abs{\frac{\hattheta(x,y) - \theta(x,y)}{ \theta(x,y)}}
        \lesssim
        \frac{\E[\sup_{x \in D} \tildev(x)]}{ \nu \sqrt{N}},
    \end{align*}
    where the second to last inequality follows since on $A$ we have 
    \begin{align*}
    |\theta(x,y)| \le | \theta(x,y) - \hattheta(x,y)| + |\hattheta(x,y)| \le \frac{1}{2}|\theta(x,y)| + |\hattheta(x,y)| \implies |\theta(x,y)| \le 2 |\hattheta(x,y)|. 
    \end{align*}
\end{proof}

\begin{lemma}\label{lem:AuxLem3}
    Under Assumption~\ref{ass:mainAssumption}, it holds with probability at least $1-3 e^{-(\E[\sup_{x \in D} \tildev(x)])^2}$ that
    \begin{align*}
        \sup_{x,y \in D} \abs{
        \frac{\hatk(x,y) - k(x,y)}{\hattheta^{1/2}(x,y)}
        }
        \lesssim 
        \frac{\E[\sup_{x\in D} \tildev(x)]}{\nu \sqrt{ N}},
    \end{align*}
    where $\hattheta \in \{\hattheta_{\mathsf{S}},\hattheta_{\mathsf{W}}\}$ in the Gaussian setting, and $\hattheta = \hattheta_{\mathsf{S}}$ otherwise.
\end{lemma}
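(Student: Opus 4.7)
The plan is to decompose the ratio by inserting $\theta^{1/2}(x,y)$ in the denominator and then handle each piece using the already-proved auxiliary lemmas. Specifically, write
\begin{align*}
    \frac{\hatk(x,y)-k(x,y)}{\hattheta^{1/2}(x,y)}
    =
    \frac{\hatk(x,y)-k(x,y)}{\theta^{1/2}(x,y)}
    \cdot
    \frac{\theta^{1/2}(x,y)}{\hattheta^{1/2}(x,y)},
\end{align*}
and control each factor separately on a single high-probability event.

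First, I would control the population-normalized deviation. By Assumption~\ref{ass:mainAssumption}~(ii), $\theta^{1/2}(x,y) \ge \sqrt{\nu}\sqrt{k(x,x)k(y,y)}$, so
\begin{align*}
    \sup_{x,y\in D}\abs{\frac{\hatk(x,y)-k(x,y)}{\theta^{1/2}(x,y)}}
    \le
    \frac{1}{\sqrt{\nu}}\sup_{x,y\in D}\abs{\frac{\hatk(x,y)-k(x,y)}{\sqrt{k(x,x)k(y,y)}}}.
\end{align*}
Applying Lemma~\ref{lem:SampleCovFnConcentration} with $t=(\E[\sup_{x\in D}\tildev(x)])^2$ and using Assumption~\ref{ass:mainAssumption}~(iii) to collapse the four terms of the maximum, this supremum is $\lesssim \E[\sup_{x\in D}\tildev(x)]/\sqrt{N}$ on an event $\Omega^{(1)}$ of probability at least $1-e^{-(\E[\sup_{x\in D}\tildev(x)])^2}$.

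Next, I would bound the ratio $\theta^{1/2}/\hattheta^{1/2}$ uniformly by a constant. By Lemma~\ref{lem:AuxLem2}, on an event $A$ of probability at least $1-2e^{-(\E[\sup_{x\in D}\tildev(x)])^2}$,
\begin{align*}
    \sup_{x,y\in D}\abs{1-\frac{\theta^{1/2}(x,y)}{\hattheta^{1/2}(x,y)}}
    =
    \sup_{x,y\in D}\abs{\frac{\hattheta^{1/2}(x,y)-\theta^{1/2}(x,y)}{\hattheta^{1/2}(x,y)}}
    \lesssim
    \frac{\E[\sup_{x\in D}\tildev(x)]}{\nu\sqrt{N}}.
\end{align*}
Taking the universal constant in Assumption~\ref{ass:mainAssumption}~(iii) small enough makes the right-hand side at most $1/2$, so $\theta^{1/2}(x,y)/\hattheta^{1/2}(x,y) \le 3/2$ uniformly in $(x,y)$.

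Finally, on the intersection $\Omega^{(1)}\cap A$, which has probability at least $1-3e^{-(\E[\sup_{x\in D}\tildev(x)])^2}$ by the union bound, multiplying the two bounds yields the claim. The argument is essentially a packaging step; I do not anticipate any real obstacle, only the bookkeeping of the probability exponent and of which event contributes which factor. One minor point to handle carefully is the choice of the constant in Assumption~\ref{ass:mainAssumption}~(iii), which must be calibrated so that Lemma~\ref{lem:AuxLem2}'s relative error is indeed less than $1/2$, thereby allowing the ratio $\theta^{1/2}/\hattheta^{1/2}$ to be bounded by an absolute constant and absorbed into $\lesssim$.
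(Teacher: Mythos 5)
Your proposal is correct and matches the paper's own argument: the same factorization through $\theta^{1/2}(x,y)$, with the first factor handled via Assumption~\ref{ass:mainAssumption}~(ii) and Lemma~\ref{lem:SampleCovFnConcentration} at $t=(\E[\sup_{x\in D}\tildev(x)])^2$, and the second via Lemma~\ref{lem:AuxLem2} together with the sample-size condition, followed by a union bound giving the $1-3e^{-(\E[\sup_{x\in D}\tildev(x)])^2}$ probability. The only cosmetic difference is that you absorb the ratio $\theta^{1/2}/\hattheta^{1/2}$ into an absolute constant $3/2$ up front, while the paper keeps the bound $I_2\lesssim 1+\E[\sup_{x\in D}\tildev(x)]/(\nu\sqrt{N})$ and simplifies at the end; the two are equivalent.
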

\begin{proof}
    Note that 
    \begin{align*}
        \abs{
        \frac{\hatk(x,y) - k(x,y)}{\hattheta^{1/2}(x,y)}
        }
        & \le 
        \abs{
        \frac{\hatk(x,y) - k(x,y)}{\theta^{1/2}(x,y)}
        }
        \abs{\frac{\theta^{1/2}(x,y)}{\hattheta^{1/2}(x,y)}}\\
        &\le 
        \abs{
        \frac{\hatk(x,y) - k(x,y)}{\theta^{1/2}(x,y)}
        }
        \inparen{
        \abs{\frac{\theta^{1/2}(x,y) - \hattheta^{1/2}(x,y)}{\hattheta^{1/2}(x,y)}}
        +1 
        }\\
        &\le 
        \abs{
        \frac{\hatk(x,y) - k(x,y)}{\sqrt{\nu k(x,x) k(y,y)}}
        }
        \inparen{
        \abs{\frac{\theta^{1/2}(x,y) - \hattheta^{1/2}(x,y)}{\hattheta^{1/2}(x,y)}}
        +1 
        }\\
        &=I_1 \times I_2.
    \end{align*}
    
    \textit{Controlling $I_1:$} 
    It holds on the event 
    $\Omega_{(\E[\sup_{x\in D} \tildev(x)])^2 }^{(1)}$ defined in Lemma~\ref{lem:SampleCovFnConcentration} that, for all $x,y \in D,$
    \begin{align*}
        I_1 
        \lesssim
        \frac{1}{\sqrt{\nu}}
        \frac{\E[\sup_{x\in D} \tildev(x)]}{\sqrt{N}},
    \end{align*}
    and $\P(\Omega_{(\E[\sup_{x\in D} \tildev(x)])^2 }^{(1)}) \ge 1-e^{-(\E[\sup_{x\in D} \tildev(x)])^2}.$

    \textit{Controlling $I_2$:} Let $B$ be the event on which the bound in Lemma~\ref{lem:AuxLem2} holds. Then, $\P(B) \ge 1-2e^{-(\E[\sup_{x\in D} \tildev(x)])^2}$, and on $B$
    \begin{align*}
        I_2 \lesssim \frac{\E[\sup_{x\in D} \tildev(x)]}{\nu\sqrt{N}} + 1.
    \end{align*}
    Then, on the event $E = \Omega_{(\E[\sup_{x\in D} \tildev(x)])^2 }^{(1)}\cap B$, we have
    \begin{align*}
        I_1 \times I_2 \lesssim 
        \frac{1}{\nu^{3/2}}
        \frac{(\E[\sup_{x\in D} \tildev(x)])^2}{N}
        \lor 
        \frac{1}{\sqrt{\nu}}
        \frac{\E[\sup_{x\in D} \tildev(x)]}{\sqrt{N}}
        =
        \frac{1}{\sqrt{\nu}}
        \frac{\E[\sup_{x\in D} \tildev(x)]}{\sqrt{N}}. 
    \end{align*}
\end{proof}

\begin{lemma}\label{lem:SampleRhoNConcentration}
    Let $v, v_1,\dots, v_N$ denote the Gaussian processes associated to $u, u_1,\dots, u_N$, which satisfy Assumption~\ref{ass:mainAssumption}. Define
    \begin{align*}
        \rho_N = 
        \frac{1}{{\nu \sqrt{ N}}} \E\insquare{\sup_{x\in D} \frac{v(x)}{k^{1/2}(x,x)}},
        \qquad 
        \hat{\rho}_N = 
        \frac{1}{{\nu \sqrt{ N}}}\inparen{
        \frac{1}{N} \sum_{n=1}^N \sup_{x\in D} \frac{v_n(x)}{\hatk^{1/2}(x,x)}
        }.
    \end{align*}
    Then, it holds with probability at least $1-4 e^{-(\E[\sup_{x \in D} \tildev(x)])^2}$ that $|\hat{\rho}_N - \rho_N| \lesssim \rho_N.$
\end{lemma}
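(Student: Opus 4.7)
\medskip

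\noindent
The plan is to decompose $\hat{\rho}_N - \rho_N$ by telescoping: add and subtract $\frac{1}{\nu\sqrt{N}} \cdot \frac{1}{N}\sum_{n=1}^N \sup_{x\in D} v_n(x)/k^{1/2}(x,x)$ to obtain
\begin{align*}
    \hat{\rho}_N - \rho_N
    = \frac{1}{\nu \sqrt{N}} \underbrace{\left[ \frac{1}{N} \sum_{n=1}^N \sup_{x\in D} \frac{v_n(x)}{\hat{k}^{1/2}(x,x)} - \frac{1}{N}\sum_{n=1}^N \sup_{x\in D} \frac{v_n(x)}{k^{1/2}(x,x)} \right]}_{T_1}
    + \frac{1}{\nu \sqrt{N}} \underbrace{\left[ \frac{1}{N}\sum_{n=1}^N \sup_{x\in D} \tildev_n(x) - \E\sup_{x\in D} \tildev(x) \right]}_{T_2}.
\end{align*}
We seek to bound each of $|T_1|/(\nu\sqrt{N})$ and $|T_2|/(\nu\sqrt{N})$ by a constant multiple of $\rho_N$ on a high-probability event, after which the claim follows by the triangle inequality and a union bound.

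\medskip

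\noindent
\textbf{Bounding $T_2$.} Since $\tildev$ is a centered Gaussian process with pointwise variance $k(x,x)/k(x,x) \equiv 1$, the Borell--TIS inequality shows that $\sup_{x \in D} \tildev(x)$ is sub-Gaussian about its mean $\E\sup_{x\in D}\tildev(x)$ with variance proxy $1$. Consequently the i.i.d.\ average $T_2$ is sub-Gaussian with variance proxy $1/N$, and a standard Chernoff estimate with deviation level $t = (\E\sup_{x\in D}\tildev(x))^2/N$ gives, with probability at least $1-2e^{-c(\E\sup_{x\in D}\tildev(x))^2}$,
\begin{equation*}
    |T_2| \lesssim \frac{\E\sup_{x\in D}\tildev(x)}{\sqrt{N}}, \qquad \text{hence} \qquad \frac{|T_2|}{\nu \sqrt{N}} \lesssim \frac{\rho_N}{\sqrt{N}} \le \rho_N.
\end{equation*}

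\medskip

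\noindent
\textbf{Bounding $T_1$.} Using $|\sup_x f(x) - \sup_x g(x)| \le \sup_x|f(x)-g(x)|$ inside each summand, and then applying $\sup(AB) \le (\sup A)(\sup B)$ for non-negative $A,B$,
\begin{equation*}
    |T_1| \le \left[ \frac{1}{N}\sum_{n=1}^N \sup_{x\in D} \frac{|v_n(x)|}{k^{1/2}(x,x)} \right] \cdot \sup_{x\in D} \left| \frac{k^{1/2}(x,x) - \hat{k}^{1/2}(x,x)}{\hat{k}^{1/2}(x,x)} \right|.
\end{equation*}
The second factor is controlled by the diagonal restriction ($y = x$) of Lemma~\ref{lem:SampleCovFnConcentration} together with the identity $a^{1/2} - b^{1/2} = (a-b)/(a^{1/2}+b^{1/2})$ (exactly as in Lemma~\ref{lem:AuxLem2}); this yields a bound of order $\E\sup_{x\in D}\tildev(x)/\sqrt{N}$ with probability at least $1-e^{-(\E\sup_{x\in D}\tildev(x))^2}$. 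For the first factor, Borell--TIS (applied to $|\tildev_n|$, whose expected supremum is at most $2\E\sup_{x\in D}\tildev(x)$) together with sub-Gaussian concentration of the empirical mean bounds it by $c \cdot \E\sup_{x\in D}\tildev(x)$ with probability at least $1-2e^{-c(\E\sup_{x\in D}\tildev(x))^2}$. Multiplying the two bounds and dividing by $\nu \sqrt{N}$,
\begin{equation*}
    \frac{|T_1|}{\nu \sqrt{N}} \lesssim \frac{(\E\sup_{x\in D}\tildev(x))^2}{\nu^2 N} = \rho_N \cdot \frac{\E\sup_{x\in D}\tildev(x)}{\nu \sqrt{N}} \lesssim \rho_N,
\end{equation*}
where in the last step we invoked Assumption~\ref{ass:mainAssumption}(iii) to conclude that $\E\sup_{x\in D}\tildev(x)/(\nu \sqrt{N})$ is bounded by a small absolute constant.

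\medskip

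\noindent
\textbf{The main obstacle} lies in controlling $T_1$, because the data-dependent normalization $\hat{k}^{1/2}(x,x)$ interacts with the supremum of the Gaussian process in a nonlinear way. The resolution is the Lipschitz-type factorization above combined with Borell--TIS: the latter converts the uniform unit-variance bound on $\tildev$ into sub-Gaussian concentration of $\sup_{x\in D}\tildev(x)$, which is the dimension-free ingredient needed to avoid losses that would otherwise scale with the complexity of $D$. The remaining work is bookkeeping on the union of three high-probability events, which yields the claimed probability $1 - 4e^{-(\E\sup_{x\in D}\tildev(x))^2}$ after absorbing universal constants into the exponent.
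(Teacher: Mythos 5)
Your proposal is correct and follows essentially the same route as the paper: the same telescoping decomposition into the normalization-error term and the empirical-mean deviation term, Borell--TIS/sub-Gaussian concentration of $\sup_{x\in D}\tildev(x)$ for the latter, the diagonal of Lemma~\ref{lem:SampleCovFnConcentration} with the $\sqrt{a}-\sqrt{b}$ manipulation of Lemma~\ref{lem:AuxLem2} for the former, and Assumption~\ref{ass:mainAssumption}(iii) to absorb the quadratic term. The only cosmetic difference is that you control $\frac{1}{N}\sum_n \sup_{x\in D}|\tildev_n(x)|$ by a separate concentration step, whereas the paper folds that factor back into the $T_2$-type deviation plus $\E[\sup_{x\in D}\tildev(x)]$; both yield the same bound.
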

\begin{proof}
    \begin{align*}
        &\nu \sqrt{ N} \abs{\hat{\rho}_N - \rho_N}
        = 
        \abs{
        \frac{1}{N} \sum_{n=1}^N \sup_{x\in D} \frac{v_n(x)}{\hatk^{1/2}(x,x)}
        -
        \E\insquare{\sup_{x\in D} \frac{v(x)}{k^{1/2}(x,x)}}
        }\\
        &=
        \abs{
        \frac{1}{N} \sum_{n=1}^N \sup_{x\in D} 
        \inparen{ 
        \frac{v_n(x)}{\hatk^{1/2}(x,x)}
        -
        \frac{v_n(x)}{k^{1/2}(x,x)}
        +
        \frac{v_n(x)}{k^{1/2}(x,x)}
        }
        -
        \E\insquare{\sup_{x\in D} \frac{v(x)}{k^{1/2}(x,x)}}
        }\\
        &\le 
        \abs{
        \frac{1}{N} \sum_{n=1}^N \sup_{x\in D} 
        \inparen{ 
        \frac{v_n(x)}{\hatk^{1/2}(x,x)}
        -
        \frac{v_n(x)}{k^{1/2}(x,x)}
        }
        }
        + 
        \abs{
        \frac{1}{N} \sum_{n=1}^N \sup_{x\in D} 
        \frac{v_n(x)}{k^{1/2}(x,x)}
        -
        \E\insquare{\sup_{x\in D} \frac{v(x)}{k^{1/2}(x,x)}}
        }\\
        &= I_1 + I_2.
    \end{align*}
    
    \textit{Controlling $I_1$:} We write
    \begin{align*}
        \frac{v_n(x)}{\hatk^{1/2}(x,x)}
        -
        \frac{v_n(x)}{k^{1/2}(x,x)}
        &=
        \frac{v_n(x)}{k^{1/2}(x,x)} \frac{\hatk^{1/2}(x,x)- k^{1/2}(x,x)}{\hatk^{1/2}(x,x)}.
    \end{align*}
    Define the event 
    \begin{align*}
        A
        := \inbraces{
            \sup_{x \in D} \abs{
            \frac{\hatk(x,x) - k(x,x)}{k(x,x)}
            } \le \frac{\E[\sup_{x \in D} \tildev(x)]}{ \nu \sqrt{N}}
        }.
    \end{align*}
    By Lemma~\ref{lem:SampleCovFnConcentration} and Assumption~\ref{ass:mainAssumption} (iii), we have that $\P(A) \ge 1-2e^{- (\E[\sup_{x \in D} \tildev(x)])^2}.$ Further note that the universal constant in Assumption~\ref{ass:mainAssumption}~(iii) can be taken sufficiently small to ensure that $\frac{\E[\sup_{x \in D} \tildev(x)]}{ \nu \sqrt{N}} \le \frac{1}{2}$. By a similar argument to the one used in Lemma~\ref{lem:AuxLem2}, conditional on $A$ and for any $x \in D,$
    \begin{align*}
        \abs{\frac{\hatk^{1/2}(x,x) - k^{1/2}(x,x)}{ \hatk^{1/2}(x,x)}}
        \lesssim
        \frac{\E[\sup_{x \in D} \tildev(x)]}{ \nu \sqrt{N}} \le \frac{1}{2}.
    \end{align*}
    Therefore, on $A$ it holds that
    \begin{align*}
        I_1 
        &=
        \abs{
        \frac{1}{N} \sum_{n=1}^N \sup_{x\in D} 
        \inparen{ 
        \frac{v_n(x)}{\hatk^{1/2}(x,x)}
        -
        \frac{v_n(x)}{k^{1/2}(x,x)}
        }
        }\\
        &\le 
        \frac{1}{2}
        \abs{
        \frac{1}{N} \sum_{n=1}^N \sup_{x\in D} 
        \frac{v_n(x)}{k^{1/2}(x,x)}
        }\\
        &\lesssim
        \abs{
        \frac{1}{N} \sum_{n=1}^N 
        \sup_{x\in D} \frac{v_n(x)}{k^{1/2}(x,x)}
        -
         \E \insquare{\sup_{x\in D} \frac{v(x)}{k^{1/2}(x,x)}}
        }
        +
        \E \insquare{\sup_{x\in D}\frac{v(x)}{k^{1/2}(x,x)}}\\
        &=
        I_2 + \nu\sqrt{N} \rho_N.
    \end{align*}
    
    \textit{Controlling $I_2$:} By \cite[Lemma 2.4.7]{talagrand2022upper}, $\sup_{x \in D} \tildev_n(x)$ is  $\sup_{x \in D} \tvar \bigl(\tildev_n(x)\bigr)$-sub-Gaussian. Since $\tvar \bigl(\tildev_n(x)\bigr) = 1$, it follows by sub-Gaussian concentration that with probability at least $1-2e^{-t}$, $I_2 \le \sqrt{2t/N}.$ Choosing $t=(\E[\sup_{x\in D} \tildev(x)])^2/2$, we have $I_2 \le \E[\sup_{x\in D} \tildev(x)]/\sqrt{N}$. Putting the bounds together, we have shown that
    \begin{align*}
         \abs{\hat{\rho}_N - \rho_N} \le \frac{1}{\nu \sqrt{ N}} (I_1 + I_2) \lesssim \frac{1}{\nu \sqrt{ N}} \frac{\E[\sup_{x\in D} \tildev(x)]}{\sqrt{N}} + \rho_N 
         \lesssim \rho_N.
    \end{align*}
\end{proof}

\begin{lemma}\label{lem:AuxLem4}
    Under the setting of Lemma~\ref{lem:SampleRhoNConcentration}, it holds with probability at least $1-7 e^{-(\E[\sup_{x \in D} \tildev(x)])^2}$ that
    \begin{align*}
            \sup_{x,y \in D} \abs{
            \frac{\hatk(x,y) - k(x,y)}{\hattheta^{1/2}(x,y)}
            } \le \frac{\hat{\rho}_N}{2},
    \end{align*}
    where $\hattheta \in \{\hattheta_{\mathsf{S}},\hattheta_{\mathsf{W}}\}$ in the Gaussian setting, and $\hattheta = \hattheta_{\mathsf{S}}$ otherwise.
\end{lemma}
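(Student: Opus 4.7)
The plan is to prove Lemma~\ref{lem:AuxLem4} by combining Lemma~\ref{lem:AuxLem3} and Lemma~\ref{lem:SampleRhoNConcentration} on the intersection of their good events, followed by a union bound. I would first let $E_1$ denote the event from Lemma~\ref{lem:AuxLem3} on which
\[
\sup_{x,y \in D}\abs{\frac{\hatk(x,y)-k(x,y)}{\hattheta^{1/2}(x,y)}} \le C_3\, \frac{\E[\sup_{x\in D}\tildev(x)]}{\nu\sqrt{N}} = C_3\, \rho_N,
\]
where the equality uses the definition $\rho_N = (\nu\sqrt{N})^{-1} \E[\sup_{x\in D}\tildev(x)]$ from Lemma~\ref{lem:SampleRhoNConcentration} and $C_3$ denotes the universal constant implicit in the $\lesssim$. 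This event has probability at least $1 - 3 e^{-(\E[\sup_{x\in D}\tildev(x)])^2}$.

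Next, I would let $E_2$ denote the event from Lemma~\ref{lem:SampleRhoNConcentration} on which $|\hat{\rho}_N - \rho_N| \le C_R\, \rho_N$ for a universal constant $C_R$. By the reverse triangle inequality, this yields $\hat{\rho}_N \ge (1-C_R)\rho_N$ whenever $C_R < 1$, and hence $\rho_N \le \hat{\rho}_N/(1-C_R)$. This event has probability at least $1 - 4 e^{-(\E[\sup_{x\in D}\tildev(x)])^2}$.

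On $E_1 \cap E_2$, chaining the two estimates gives
\[
\sup_{x,y \in D}\abs{\frac{\hatk(x,y)-k(x,y)}{\hattheta^{1/2}(x,y)}} \le \frac{C_3}{1-C_R}\, \hat{\rho}_N.
\]
The remaining task is to verify that $C_3/(1-C_R) \le 1/2$; this is achieved by calibrating the universal constants, either by choosing the pre-factor $c_0$ in the definition of $\rho_N$ sufficiently large (equivalently, by choosing the constant $c$ in Assumption~\ref{ass:mainAssumption}~(iii) sufficiently small), so that both $C_R$ and $C_3$ inherit the requisite smallness from the inequalities in Lemmas~\ref{lem:AuxLem3} and \ref{lem:SampleRhoNConcentration}. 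A union bound then yields $\P(E_1 \cap E_2) \ge 1 - 7 e^{-(\E[\sup_{x\in D}\tildev(x)])^2}$, matching the claimed probability. The choice of estimator $\hattheta \in \{\hattheta_{\mathsf{S}},\hattheta_{\mathsf{W}}\}$ enters only through Lemmas~\ref{lem:AuxLem3} and \ref{lem:SampleRhoNConcentration}, which already cover both cases under the stated Gaussianity assumption.

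The main (minor) obstacle is bookkeeping the universal constants to land exactly at the factor $1/2$ on the right-hand side; structurally, the argument is a mechanical assembly of the two preceding lemmas via a union bound and the reverse triangle inequality, and requires no additional probabilistic input.
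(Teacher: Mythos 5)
Your proposal is correct and follows essentially the same route as the paper: both combine Lemma~\ref{lem:AuxLem3} and Lemma~\ref{lem:SampleRhoNConcentration} via a union bound, the paper phrasing it as a split of the complement event $\{\sup \ge \hat{\rho}_N/2\}$ into $\{\sup \ge \rho_N/4\}$ and $\{\rho_N - \hat{\rho}_N \ge \rho_N/4\}$, while you intersect the two good events and chain $\rho_N \le \hat{\rho}_N/(1-C_R)$. Your explicit acknowledgement of the constant calibration needed to land on the factor $1/2$ is, if anything, more careful than the paper, which absorbs this silently into the $\lesssim$ notation.
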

\begin{proof}
    Let $E$ be the event on which the bound in the statement of the theorem holds. Then
    \begin{align*}
        \P(E^c) 
        &= 
        \P \inparen{\sup_{x,y \in D} \abs{
            \frac{\hatk(x,y) - k(x,y)}{\hattheta^{1/2}(x,y)}
            }  + \frac{1}{2}(\rho_N - \hat{\rho}_n) \ge \frac{1}{2}  \rho_N}\\
        &\le  
        \P \inparen{\sup_{x,y \in D} \abs{
            \frac{\hatk(x,y) - k(x,y)}{\hattheta^{1/2}(x,y)}
            } \ge \frac{1}{4}\rho_N }
            + 
            \P \inparen{\rho_N - \hat{\rho}_n \ge \frac{1}{4}\rho_N}\\
        & \le 7 e^{-(\E [\sup_{x \in D} \tildev (x)])^2},
    \end{align*}
    where the last line follows by Lemmas~\ref{lem:AuxLem3} and \ref{lem:SampleRhoNConcentration}.
\end{proof}

\begin{proof}[Proof of Theorem \ref{thm:ThresholdOpNormBoundwSampleRho}]
     We consider first the Gaussian case. Let $\hattheta \in \{\hattheta_{\mathsf{S}},\hattheta_{\mathsf{W}} \}$.
    Define the three events: 
    \begin{align*}
        E_1 &:= \inbraces{
            \sup_{x,y \in D} \abs{
            \frac{\hatk(x,y) - k(x,y)}{\hattheta^{1/2}(x,y)}
            } 
            \lesssim 
            \frac{\hat{\rho}_N}{2}
        },\\
        E_2 &:= 
        \inbraces{
            \sup_{x,y \in D} \abs{
            \frac{\hattheta(x,y) - \theta(x,y)}{\theta(x,y)}
            } 
            \lesssim
            \frac{1}{2}
        },\\
        E_3 &:= 
        \inbraces{
        \abs{\hat{\rho}_N - \rho_N} 
        \lesssim
        \rho_N
        },
    \end{align*}
    and $E= E_1 \cap E_2 \cap E_3$. The final result holds on $E$ as will be shown below, and so the proof is completed by noting that from Lemmas~\ref{lem:AuxLem3}, \ref{lem:SampleRhoNConcentration} and \ref{lem:AuxLem4}, $\P(E) \ge 1-c_1e^{- (\E[\sup_{x \in D} \tildeu(x)])^2}.$ Further note that on the event $E_2$, for any $x,y$ we have the following relation:
    \begin{align}\label{eq:E2Fact}
        \frac{1}{2}|\theta(x,y)| \le |\hattheta(x,y)|\le 2|\theta(x,y)|.
    \end{align}
    Now, defining the set
    \begin{align*}
        \Omega_x := \inbraces{ y \in D : \abs{\frac{k(x,y)}{\hattheta^{1/2}(x,y)}} \ge \frac{\hat{\rho}_N}{2} }, 
    \end{align*}
    we have 
    \begin{align*}
        &\normn{\hatC_{\hat{\rho}_N} - C} 
        \le \sup_{x\in D} \int_D |\hatk_{\hat{\rho}_N} (x,y) - k(x,y)| dy\\
        &=
        \sup_{x\in D} \int_{\Omega_x} 
        \abs{ 
        \frac{\hatk_{\hat{\rho}_N} (x,y) - k(x,y)}{\hattheta^{1/2}(x,y)}
        } |\hattheta^{1/2}(x,y)| dy
        +
        \sup_{x\in D} \int_{\Omega_x^c} 
        \abs{ 
        \frac{\hatk_{\hat{\rho}_N}(x,y) - k(x,y)}{\hattheta^{1/2}(x,y)}
        } |\hattheta^{1/2}(x,y)| dy\\
        &=
        \sup_{x\in D} \int_{\Omega_x} 
        \abs{ 
        \frac{\hatk_{\hat{\rho}_N} (x,y) - \hatk(x,y)}{\hattheta^{1/2}(x,y)}
        } |\hattheta^{1/2}(x,y)| dy
        +
        \sup_{x\in D} \int_{\Omega_x} 
        \abs{ 
        \frac{\hatk(x,y) - k(x,y)}{\hattheta^{1/2}(x,y)}
        } |\hattheta^{1/2}(x,y)| dy\\
        &+
        \sup_{x\in D} \int_{\Omega_x^c} 
        \abs{ 
        \frac{\hatk_{\hat{\rho}_N}(x,y) - k(x,y)}{\hattheta^{1/2}(x,y)}
        } |\hattheta^{1/2}(x,y)| dy =: I_1 + I_2 + I_3.
    \end{align*}

    \textit{Controlling $I_1$}: For any $x,y \in D,$
    \begin{align*}
        \abs{ 
        \frac{\hatk_{\hat{\rho}_N} (x,y) - \hatk(x,y)}{\hattheta^{1/2}(x,y)}
        } 
       & = 0 \times \indicator  
        \inbraces{
        \abs{\frac{\hatk(x,y)}{\hattheta^{1/2}(x,y)}} \ge \hat{\rho}_N
        }
        + 
        \abs{\frac{\hatk(x,y)}{\hattheta^{1/2}(x,y)}}
          \times
          \indicator  
        \inbraces{
        \abs{\frac{\hatk(x,y)}{\hattheta^{1/2}(x,y)}} < \hat{\rho}_N
        } \\
          &\le \hat{\rho}_N.
    \end{align*}
    Therefore, 
    \begin{align*}
        I_1 
        \le 
        \hat{\rho}_N \sup_{x\in D} \int_{\Omega_x} |\hattheta^{1/2}(x,y)| dy.
    \end{align*}
    By Assumption~\ref{ass:mainAssumption}, we have that 
    \begin{align*}
        R^q_q 
        &\ge 
        \sup_{x \in D} \int_D \inparen{k(x,x)k(y,y)}^{(1-q)/2} |k(x,y)|^q dy\\
        &\ge 
        \sup_{x \in D} \int_{\Omega_x} \inparen{k(x,x)k(y,y)}^{(1-q)/2} |k(x,y)|^q dy\\
        &\gtrsim
        \sup_{x \in D} \int_{\Omega_x} \inparen{k(x,x)k(y,y)}^{(1-q)/2} \hat{\rho}_N^q |\hattheta^{q/2}(x,y)| dy\\
        &\gtrsim \sup_{x \in D} \int_{\Omega_x} \inparen{k(x,x)k(y,y)}^{(1-q)/2} \hat{\rho}_N^q |\theta^{q/2}(x,y)| dy,
    \end{align*}
    where the third inequality follows by definition of $\Omega_x$, and the final inequality holds by \eqref{eq:E2Fact}. Further, we have
    \begin{align*}
     \theta(x,y) = \tvar \bigl(u(x)u(y)\bigr) \le \sqrt{\E [u^4(x)]\E [u^4(y)]}   \lesssim \E [u^2(x)] \E [u^2(y)] = k(x,x) k(y,y),
    \end{align*}
    where the first inequality follows by Cauchy-Schwarz,
    and the second inequality follows by the $L_4$-$L_2$ equivalence property of sub-Gaussian random variables. Therefore, it follows that 
    \begin{align*}
        R^q_q 
        &\gtrsim 
        \hat{\rho}_N^q 
        \sup_{x \in D} \int_{\Omega_x} |\theta^{1/2}(x,y)| dy 
        \ge 
        \hat{\rho}_N^q \frac{I_1}{\hat{\rho}_N}.
    \end{align*}
    We have therefore shown that $I_1 \lesssim R_q^q \hat{\rho}_N^{1-q}$, and by definition of $E_3$, it follows immediately that $I_1 \lesssim R_q^q \rho_N^{1-q}$.

    \textit{Controlling $I_2$}: On $E$, we have 
    \begin{align*}
        I_2 \lesssim \rho_N  
        \sup_{x\in D} \int_{\Omega_x} 
        |\hattheta^{1/2}(x,y)| dy
        \lesssim 
        R_q^q \rho_N^{1-q},
    \end{align*}
    which can be bounded with an identical argument to the one used to bound $I_1$.

    \textit{Controlling $I_3$}: On $E \cap \Omega_x^c$, we have 

    \begin{align*}
        \abs{\frac{\hatk(x,y)}{\hattheta^{1/2}(x,y)}}
        \le 
        \abs{\frac{\hatk(x,y) - k(x,y)}{\hattheta^{1/2}(x,y)}}
        + \abs{\frac{k(x,y)}{\hattheta^{1/2}(x,y)}}
        \le 
        \frac{\hat{\rho}_N}{2}+\frac{\hat{\rho}_N}{2}
        = \hat{\rho}_N.
    \end{align*}
    Therefore, $\hatk_{\hat{\rho}_N}(x,y) = \hatk(x,y) \indicator \inbraces{ \abs{\frac{\hatk(x,y)}{\hattheta^{1/2}(x,y)}} \ge \hat{\rho}_N} = 0$. Now, for any $q \in [0,1),$
    \begin{align*}
        I_3
        &\le 
        \sup_{x\in D} \int_{D} 
        \abs{\frac{k(x,y)}{\hattheta^{1/2}(x,y) }} |\hattheta^{1/2}(x,y)|  \indicator \inbraces{ \abs{
        \frac{k(x,y)}{\hattheta^{1/2}(x,y)}
        }
        \le \frac{\hat{\rho}_N}{2} } 
        dy\\
         &\le 
        \sup_{x\in D} \int_{D} 
        \abs{\frac{k(x,y)}{\hattheta^{1/2}(x,y) }} |\hattheta^{1/2}(x,y)|  \indicator \inbraces{ \abs{
        \frac{k(x,y)}{\hattheta^{1/2}(x,y)}
        }
        \le \rho_N }
        dy\\
        &\le 
        \rho_N
        \sup_{x\in D} \int_{D} 
        \inparen{\abs{\frac{k(x,y)}{\hattheta^{1/2}(x,y)}}/\rho_N}^q  |\hattheta^{1/2}(x,y)|  
        \indicator \inbraces{ \abs{
        \frac{k(x,y)}{\hattheta^{1/2}(x,y)}
        }
        \le \rho_N } 
        dy\\
        &\lesssim 
        \rho_N^{1-q}
        \sup_{x\in D} \int_{D} 
        \abs{k(x,y)}^q \hattheta(x,y)^{(1-q)/2}  dy.
    \end{align*}
    The second inequality holds since on $E_3$, $\hat{\rho}_N \lesssim 2\rho_N$. The third inequality holds since the quantity being taken to the $q$-th power is smaller than 1 and $q \in [0,1)$. Combining \eqref{eq:E2Fact} with Assumption~\ref{ass:mainAssumption} (ii) gives that $\hattheta(x,y)^{(1-q)/2} \le \theta(x,y)^{(1-q)/2} \le \bigl(k(x,x)k(y,y)\bigr)^{(1-q)/2}$
    and so $I_3 \lesssim \rho_N^{1-q} R_q^q$. 
     This completes the proof of the result in the Gaussian case. The proof in the sub-Gaussian setting follows identically except that the events $E_1,E_2$ are defined with respect to $\hattheta_{\mathsf{S}}$ only, and $\rho_N$ is used in place of $\hat{\rho}_N.$
\end{proof}

\section{Product Empirical Processes}\label{sec:ProdEmpiricalProcess}

This section contains the proofs of Lemmas~\ref{lem:sub-GaussianStandardizedProductProcessGeneral} and \ref{lem:sub-ExponentialStandardizedProductProcessGeneral}, which were used to establish Lemmas~\ref{lem:SampleCovFnConcentration} and \ref{lem:SampleCovFnSquaredConcentration}. The proofs  rely on the recent work \cite{al2025sharp}, which provides sharp bounds for suprema of multi-product empirical processes. We begin in Section \ref{sec:backgroundempiricalprocess} by introducing 
technical definitions as well as the main result regarding multi-product empirical processes from \cite{al2023covariance}. 
We then prove in Section \ref{sec:empiricalnew} our main results of this section, Lemmas~\ref{lem:sub-GaussianStandardizedProductProcessGeneral} and \ref{lem:sub-ExponentialStandardizedProductProcessGeneral}. 
Our proofs have been inspired by the techniques introduced in \cite{koltchinskii2017concentration} as well as \cite{ghattas2022non} and \cite{al2023covariance}. These works deal with product empirical processes in which the product is taken over two sub-Gaussian classes. In contrast, the results here pertain to product empirical processes over a special category of sub-Exponential classes that arise in the nonasymptotic analysis of the variance component $\theta(x,y)$.

\subsection{Background}\label{sec:backgroundempiricalprocess}
    Let $X, X_1,\dots,X_N \iid \P$ be a sequence of random variables on a probability space $(\Omega, \mathbb{P}).$ The empirical process indexed by a class $\mcF$ of functions on $(\Omega, \P)$ is given by
    \begin{align*}
        f \mapsto \frac{1}{N} \sum_{n=1}^N f(X_n) - \E f(X), \qquad f \in \mcF.
    \end{align*}
    For $s \ge 2,$ the order-$s$ multi-product empirical process indexed by $\mcF$  is given by
    \begin{align*}
        f \mapsto \frac{1}{N} \sum_{n=1}^N f^s(X_n)- \E f^s(X), \qquad f \in \mcF.
    \end{align*}
    For any function $f$ on $(\Omega, \P)$ and $\alpha \ge 1$, the Orlicz $\psi_\alpha$-norm of $f$ is defined as 
    \begin{align*}
        \normn{f}_{\psi_\alpha(\P)}
        = \inf \Bigl\{c>0:  \E_{X \sim \P} \bigl[\exp(|f(X)/c|^\alpha)\bigr] \le 2 \Bigr\}
        = \sup_{q \ge 1} \frac{\normn{f}_{L_q(\P)}}{q^{1/\alpha}}.
    \end{align*}
    The base measure will be clear from the context, and so we write $\normn{f}_{\psi_\alpha(\P)} = \normn{f}_{\psi_\alpha}$ and similarly for the $L_q$-norms. The corresponding Orlicz space $L_{\psi_\alpha}$ contains functions with finite Orlicz $\psi_\alpha$-norm. A class of functions $\mcG$ is $L$-sub-Gaussian if, for every $f,h \in \mcG \cup \{0\}$, 
    \begin{align*}
        \normn{f-h}_{\psi_2} \le L \normn{f-h}_{L_2}.
    \end{align*}
    For a sub-Gaussian class $\G$ it holds that, for every $f,h \in \mcG \cup \{0\}$ and $q \ge 1,$
    \begin{align*}
        \normn{f-h}_{L_q} 
        \le 
        c \sqrt{q}\normn{f-h}_{\psi_2} 
        \le c L\sqrt{q} \normn{f-h}_{L_2}.
    \end{align*}
    A class of functions $\mcE$ is $L$-sub-Exponential if, for every $f,h \in \mcE \cup \{0\}$, 
    \begin{align*}
        \normn{f-h}_{\psi_1} \le L \normn{f-h}_{L_2}.
    \end{align*}
  For a sub-Exponential class $\mcE$ it holds that, for every $f,h \in \mcE \cup \{0\}$ and $q \ge 1,$
    \begin{align*}
        \normn{f-h}_{L_q} 
        \le 
        c q \normn{f-h}_{\psi_1}
        \le c L q \normn{f-h}_{L_2}.
\end{align*}
Our results depend on Talagrand's $\gamma$-functional, whose definition we now recall.
\begin{definition}[{Talagrand's $\gamma$ functional, \cite{talagrand2022upper}}]
    Let $(\mcF, \mathsf{d})$ be a metric space. An admissible sequence of $\mcF$ is a collection of subsets $\mcF_s \subset \mcF$ whose cardinality satisfies $|\mcF_s| \le 2^{2^s}$ for $s \ge 1$, and $|\mcF_0| = 1$. Set 
    \begin{align*}
        \gamma_{2}(\mcF,\mathsf{d}) 
        = \inf \sup_{f \in \mcF} \sum_{ s \ge 0} 2^{s/2} \mathsf{d}(f,\mcF_s),
    \end{align*}
    where the infimum is taken over all admissible sequences, and $\sfd(f, \mcF_s) = \inf_{g \in \mcF_s} \sfd(f,g).$ We write $\gamma_2(\mcF, \psi_2)$ when the distance on $\mcF$ is induced by the $\psi_2$-norm.
\end{definition}
We now introduce a technical result that will be used in the subsequent proofs.

\begin{lemma}\label{lem:subadditivegamma}
    Let $\mcG, \mcH$ be arbitrary subsets of a normed space endowed with the norm $\|\cdot\|.$ Define $\mcF = \mcG + \mcH$, which inherits this norm. Then 
    \begin{align*}
        \gamma_2(\mcF, \sfd) \le 
        2(\sup_{g\in\mcG} \|g\|+\sup_{h\in\mcH} \|h\|) + \sqrt{2} (\gamma_2(\mcG, \sfd)+\gamma_2(\mcH, \sfd)),
    \end{align*}
    where $\sfd(a,b)=\|a-b\|.$ Moreover, if $\mcG$ and $\mcH$ both either contain $0$ or are symmetric,
    \begin{align*}
        \gamma_2(\mcF, \sfd) \lesssim \gamma_2(\mcG, \sfd)+\gamma_2(\mcH, \sfd).
    \end{align*}
\end{lemma}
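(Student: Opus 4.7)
The plan is to construct an admissible sequence for $\mcF$ from admissible sequences for $\mcG$ and $\mcH$ via Minkowski sums, with a shift in index to respect the cardinality bounds. Fix admissible sequences $(\mcG_s)_{s \ge 0}$ and $(\mcH_s)_{s \ge 0}$ with $\mcG_0 = \{g_0\}$ and $\mcH_0 = \{h_0\}$. Set $\mcF_0 = \{g_0 + h_0\}$ and, for $s \ge 1$, $\mcF_s = \mcG_{s-1} + \mcH_{s-1}$. The bound $|\mcG_{s-1}| \cdot |\mcH_{s-1}| \le 2^{2^{s-1}} \cdot 2^{2^{s-1}} = 2^{2^s}$ ensures that $(\mcF_s)_{s \ge 0}$ is admissible for $\mcF$.

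For $f = g + h \in \mcF$ and $s \ge 1$, picking closest points $g'_{s-1} \in \mcG_{s-1}$ to $g$ and $h'_{s-1} \in \mcH_{s-1}$ to $h$ and applying the triangle inequality yields $\sfd(f, \mcF_s) \le \sfd(g, \mcG_{s-1}) + \sfd(h, \mcH_{s-1})$. For the base term, $\sfd(f, \mcF_0) \le \|g - g_0\| + \|h - h_0\| \le 2\bigl(\sup_{g' \in \mcG}\|g'\| + \sup_{h' \in \mcH}\|h'\|\bigr)$. Substituting into the $\gamma_2$-sum and reindexing $s \mapsto s+1$ in the tail gives
\begin{align*}
\sum_{s \ge 0} 2^{s/2} \sfd(f, \mcF_s)
&\le 2\Bigl(\sup_{g' \in \mcG}\|g'\| + \sup_{h' \in \mcH}\|h'\|\Bigr) + \sqrt{2}\sum_{s \ge 0} 2^{s/2}\bigl(\sfd(g,\mcG_s) + \sfd(h,\mcH_s)\bigr).
\end{align*}
Taking the supremum over $f \in \mcF$ and then infima over the two admissible sequences produces the first inequality.

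For the moreover part, I plan to show that the additive diameter term is itself controlled by $\gamma_2(\mcG, \sfd) + \gamma_2(\mcH, \sfd)$. Concretely, if $0 \in \mcG$, then for any admissible sequence with $\mcG_0 = \{f_0\}$ and any $g^\ast \in \mcG$,
\begin{align*}
\|g^\ast\| \le \|g^\ast - f_0\| + \|f_0\| \le 2 \sup_{g \in \mcG}\sfd(g, \mcG_0) \le 2 \sup_{g \in \mcG} \sum_{s \ge 0} 2^{s/2}\sfd(g, \mcG_s),
\end{align*}
and passing to the infimum over admissible sequences gives $\sup_{g \in \mcG}\|g\| \le 2\gamma_2(\mcG, \sfd)$. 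In the symmetric case $\mcG = -\mcG$, the same argument applied to the pair $g^\ast$ and $-g^\ast$ (noting $\sfd(g^\ast, -g^\ast) = 2\|g^\ast\|$) even yields $\sup_g \|g\| \le \gamma_2(\mcG, \sfd)$. The analogous bound for $\mcH$ combined with the first inequality then produces the moreover claim up to an absolute constant.

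The only delicate point is the index shift in the admissible sequence: cardinality forces $\mcF_s$ to be built from $\mcG_{s-1} + \mcH_{s-1}$ rather than $\mcG_s + \mcH_s$, which is what produces the $\sqrt{2}$ factor in the tail. Treating the singleton $\mcF_0$ separately is what generates the additive diameter term that must subsequently be absorbed via the lower bound on $\gamma_2$ in the moreover part.
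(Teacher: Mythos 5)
Your proof is correct and follows essentially the same route as the paper: the same shifted admissible sequence $\mcF_0$ a singleton, $\mcF_s=\mcG_{s-1}+\mcH_{s-1}$, the same triangle-inequality and reindexing step producing the $\sqrt{2}$ factor, and the same diameter bound for the base term. The only difference is in the ``moreover'' part, where the paper invokes an external result (Lemma 4.6 of the cited empirical-process paper) for $\sup_{g\in\mcG}\|g\|\lesssim\gamma_2(\mcG,\sfd)$, while you prove this bound directly from the $s=0$ term of the chaining sum under the assumption $0\in\mcG$ or symmetry --- a valid, self-contained substitute.
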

\begin{proof}
    Let $(\mcG_s)_s, (\mcH_s)_s$ be admissible sequences for $\mcG$ and $\mcH$ respectively. We can construct an admissible sequence for $\mcF$ as follows. Let $\mcF_0$ be an arbitrary element of $\mcF$, and for $s \ge 1$, set $\mcF_s = \mcG_{s-1}+ \mcH_{s-1} = \{ g+h: g \in \mcG_{s-1}, h \in \mcH_{s-1}\}$. This ensures admissibility since $|\mcF_s| \le |\mcG_{s-1}||\mcH_{s-1}| \le 2^{2^{s-1}}2^{2^{s-1}} = 2^{2^{s}}.$ Note then that 
    \begin{align*}
        \gamma_2(\mcF, \sfd)
        &\le \sup_{f \in \mcF} \mathsf{d}(f,\mcF_0)
        + \sup_{f \in \mcF} \sum_{ s \ge 1} 2^{s/2} \mathsf{d}(f,\mcF_s)
        \\
        &=\sup_{f \in \mcF} \mathsf{d}(f,\mcF_0) + 
        \sup_{g \in \mcG, h \in \mcH} \sum_{ s \ge 1} 2^{s/2} \mathsf{d}(g+h,\mcG_{s-1}+\mcH_{s-1})\\
        &\le \sup_{f \in \mcF} \mathsf{d}(f,\mcF_0) + \sup_{g \in \mcG} \sum_{ s \ge 1} 2^{s/2} \mathsf{d}(g,\mcG_{s-1})
        + \sup_{h \in \mcH} \sum_{ s \ge 1} 2^{s/2} \mathsf{d}(h,\mcH_{s-1})\\
        &=
        \sup_{f \in \mcF} \mathsf{d}(f,\mcF_0)+
        \sqrt{2}\sup_{g \in \mcG} \sum_{ s \ge 0} 2^{s/2} \mathsf{d}(g,\mcG_{s})
        + \sqrt{2} \sup_{h \in \mcH} \sum_{ s \ge 0} 2^{s/2} \mathsf{d}(h,\mcH_{s}).
        \end{align*}
        Noting that 
        \begin{align*}
            \sup_{f \in \mcF} \mathsf{d}(f,\mcF_0) \le \tdiam(\mcF) \le \tdiam(\mcG) + \tdiam(\mcH) \le 2(\sup_{g\in\mcG} \|g\|+\sup_{h\in\mcH} \|h\|), 
        \end{align*}
        and taking the infimum with respect to $(\mcG_s)_s$ and $(\mcH_s)_s$ on both sides yields the first result. In the case that $\mcG$ and $\mcH$ both either contain $0$ or are symmetric, we have that $\sup_{g\in\mcG} \|g\| \lesssim \gamma_2(\mcG, \sfd)$ by \cite[Lemma 4.6]{al2025sharp}, and similarly for $\mcH.$
\end{proof}

The next result provides optimal high probability bounds on order-$s$ multi-product empirical processes.

\begin{theorem}[{\cite[Theorem 2.2]{al2025sharp}}] \label{thm:multiproductemp}
    Assume that $0 \in \mcF$ or that $\mcF$ is symmetric (i.e., $f \in \mcF \implies -f \in \mcF$). For any $s \ge 2$ and $t \ge 1$, it holds with probability at least $1-e^{-t}$ that, for any $f \in \mcF,$
    \begin{align*}
        \abs{
        \frac{1}{N} \sum_{n=1}^n f^s(X_n) - \E f^s(X)
        } \lesssim_s 
        \frac{\gamma_2(\mcF, \psi_2) d_{\psi_2}^{s-1}(\mcF)}{\sqrt{N}}
        \lor 
        \frac{\gamma_2^s(\mcF, \psi_2)}{N}
        \lor 
        d_{\psi_2}^s(\mcF) \inparen{ \sqrt{\frac{t}{N}}
        \lor 
        \frac{t^{s/2}}{N}},
    \end{align*}
    where $\lesssim_s$ indicates that the inequality holds up to a universal positive constant depending only on $s$, and $d_{\psi_2}(\mcF) = \sup_{f \in \mcF} \|f\|_{\psi_2}.$
\end{theorem}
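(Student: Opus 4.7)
The plan is to combine a pointwise sub-Weibull Bernstein inequality with Talagrand's generic chaining over $\mcF$ equipped with the $\psi_2$ metric. The pointwise starting point is that if $\|f\|_{\psi_2} \le K$, then $f^s$ is sub-Weibull of order $2/s$ with $\|f^s\|_{\psi_{2/s}} \lesssim_s K^s$, so the standard Bernstein-type inequality for $\psi_{2/s}$ random variables gives, with probability at least $1 - e^{-t}$,
\begin{equation*}
\Bigl|\tfrac{1}{N}\sum_{n=1}^N f^s(X_n) - \E f^s(X)\Bigr| \lesssim_s K^s\bigl(\sqrt{t/N} \lor t^{s/2}/N\bigr).
\end{equation*}
Applied with $K = d_{\psi_2}(\mcF)$, this already produces the third (pointwise) term in the bound.

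Next I would lift this to a uniform bound by generic chaining, with the key algebraic input being the polynomial factorization
\begin{equation*}
f^s - g^s = (f - g)\sum_{j=0}^{s-1} f^{j} g^{s-1-j},
\end{equation*}
together with the Orlicz H\"older inequality $\|hk\|_{\psi_\gamma} \lesssim \|h\|_{\psi_\alpha}\|k\|_{\psi_\beta}$ for $\gamma^{-1} = \alpha^{-1} + \beta^{-1}$. These combine to give $\|f^s - g^s\|_{\psi_{2/s}} \lesssim_s \|f-g\|_{\psi_2}\,d_{\psi_2}^{s-1}(\mcF)$, so the centred increment satisfies a Bernstein bound with effective diameter $\|f-g\|_{\psi_2}\,d_{\psi_2}^{s-1}(\mcF)$ in place of $K^s$. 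I would then fix an almost-optimal admissible sequence $(\mcF_l)_{l \ge 0}$ realising $\gamma_2(\mcF,\psi_2)$, let $\pi_l(f) \in \mcF_l$ denote a nearest point, and telescope the process along the chain. The level-zero term is controlled by the pointwise bound, and for each $l \ge 1$ I would apply the increment bound with a union bound over the at most $2^{2^{l+1}}$ pairs in $\mcF_l \times \mcF_{l-1}$, which only replaces the confidence parameter $t$ by $t + c\cdot 2^l$.

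Summing the sub-Gaussian regime of the chaining Bernstein bounds contributes $\sum_{l \ge 1} 2^{l/2}\|f - \pi_{l-1}(f)\|_{\psi_2}\,d_{\psi_2}^{s-1}(\mcF)/\sqrt{N}$, which by definition of $\gamma_2$ telescopes to the first term $\gamma_2(\mcF,\psi_2)\,d_{\psi_2}^{s-1}(\mcF)/\sqrt{N}$. The hard part will be producing the $\gamma_2(\mcF,\psi_2)^s/N$ term from the sub-Weibull regime, whose raw level-$l$ contribution is $\|f - \pi_{l-1}(f)\|_{\psi_2}\,d_{\psi_2}^{s-1}(\mcF)\,2^{ls/2}/N$ and does not telescope directly. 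The natural remedy is to split the chain at the critical level where the two Bernstein regimes balance and, in the heavy regime, use H\"older's inequality to control $\sum_l \delta_l\,2^{ls/2}$ by a power $\bigl(\sum_l 2^{l/2}\delta_l\bigr)^{s}$ up to $s$-dependent constants; the leftover factors of $d_{\psi_2}^{s-1}(\mcF)$ must be reabsorbed, which is where the symmetry or $0\in\mcF$ assumption enters (it controls $\sup_{f\in\mcF}\|f\|_{\psi_2}$ by $\gamma_2(\mcF,\psi_2)$, in the same spirit as Lemma~\ref{lem:subadditivegamma}). Making this balancing sharp with purely $s$-dependent constants, rather than with a weaker combined rate, is the technical core of the argument and the reason the final bound couples three otherwise distinct scales.
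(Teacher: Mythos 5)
A preliminary remark: the paper does not prove this statement at all --- Theorem~\ref{thm:multiproductemp} is quoted verbatim from \cite[Theorem 2.2]{al2025sharp} and used as a black box --- so your attempt can only be compared with the known proofs of such bounds (Krahmer--Mendelson--Rauhut for $s=2$, Mendelson's product-process machinery, and the cited reference itself).

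Your first two steps are sound: the pointwise $\psi_{2/s}$-Bernstein bound yields the third term, and the factorization $f^s-g^s=(f-g)\sum_{j=0}^{s-1}f^jg^{s-1-j}$ with the Orlicz H\"older inequality gives increments with $\|f^s-g^s\|_{\psi_{2/s}}\lesssim_s\|f-g\|_{\psi_2}\,d_{\psi_2}^{s-1}(\mcF)$, whose sub-Gaussian regime telescopes to the first term. The gap is exactly where you place it, and the remedy you propose does not work. With union bounds of size $2^{2^{l+1}}$, the heavy regime of the level-$l$ Bernstein bound contributes $\delta_l\,2^{ls/2}\,d_{\psi_2}^{s-1}(\mcF)/N$, where $\delta_l$ is the $\psi_2$-increment at level $l$; summing over the chain produces a $\gamma_{2/s}$-type functional (for $s=2$, a $\gamma_1$ functional) multiplied by $d_{\psi_2}^{s-1}(\mcF)/N$, not $\gamma_2^s(\mcF,\psi_2)/N$. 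No H\"older-type manipulation of the level sums can repair this: the inequality you invoke, $\sum_l 2^{ls/2}\delta_l\lesssim_s\bigl(\sum_l 2^{l/2}\delta_l\bigr)^s$, is not homogeneous in the increments, and the homogeneous version you actually need, $d_{\psi_2}^{s-1}(\mcF)\sum_l 2^{ls/2}\delta_l\lesssim\gamma_2^s(\mcF,\psi_2)$, is false in general. For $s=2$ it would assert $\gamma_1(\mcF,\psi_2)\lesssim\gamma_2^2(\mcF,\psi_2)/d_{\psi_2}(\mcF)$, which fails, for instance, for classes whose entropy numbers decay like $2^{-l/2}l^{-2}$: there $\gamma_2$ and $d_{\psi_2}$ are finite while $\gamma_1$ --- and hence the heavy-regime chain sum --- is infinite. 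The correct proofs do something structurally different at the high levels: they truncate the chain at a level $l^*$ dictated by the sample size (roughly $2^{l^*}\asymp N$, or $N^{1/(s-1)}$ in the higher-order case) and control the end of the chain not by union-bounded Bernstein but by empirical Cauchy--Schwarz combined with sub-Gaussian maximal inequalities over the $N$ sample points; it is this mechanism, not a rebalancing of union-bound sums, that produces the $\gamma_2^s(\mcF,\psi_2)/N$ term. The assumption that $0\in\mcF$ or $\mcF$ is symmetric serves to anchor the chain and gives $d_{\psi_2}(\mcF)\lesssim\gamma_2(\mcF,\psi_2)$, but it cannot substitute for that argument. As written, your sketch establishes the weaker bound with $\gamma_{2/s}$ (or $d_{\psi_2}^{s-1}\gamma_1$-type) in place of $\gamma_2^s$, which is not sufficient for the theorem.
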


\subsection{Product Sub-Gaussian and Sub-Exponential Classes}\label{sec:empiricalnew}
The goal of this section is to apply Theorem~\ref{thm:multiproductemp} to the problem of bounding product empirical processes indexed by a function class $\mcF$, given by
    \begin{align*}
        f,g \mapsto \frac{1}{N} \sum_{n=1}^N f(X_n)g(X_n)- \E[f(X)g(X)], \qquad f,g \in \mcF.
    \end{align*}
    Bounding the suprema of such processes arises in two important ways in this work. First, in establishing uniform bounds on the deviation of the sample covariance function $\hatk$ from its expectation, in which case the indexing class $\mcF$ is sub-Gaussian and we refer to it as a product sub-Gaussian process. Second, in establishing uniform bounds on the deviation of the sample variance component $\hattheta$ from its expectation, in which case $\mcF$ is sub-Exponential and we refer to it as a sub-Exponential product process.

We now present our two main results of this section. The first bounds the suprema of the product process indexed by two sub-Gaussian classes,  and the second bounds the suprema of the product process indexed by two sub-Exponential classes. 

We recall here that $u, u_1,\dots, u_N $ are i.i.d. centered sub-Gaussian and pre-Gaussian random functions on $D=[0,1]^d$ taking values on the real line and with covariance function $k$. We assume that these functions are Lebesgue almost-everywhere continuous with probability one.  Denote by $\tildeu, \tildeu_1, \ldots, \tildeu_N$ their normalized versions as defined in \eqref{eq:standardized}. Further, recall that a pre-Gaussian process $u$ is one for which there exists a centered Gaussian process, $v$, that has the same covariance structure as $u$. Following \cite[page 261]{ledoux2013probability}, we refer to $v$ as the Gaussian process \textit{associated to} $u$.

\begin{lemma} \label{lem:sub-GaussianStandardizedProductProcessGeneral}
        It holds with probability at least $1-e^{-t}$ that, for any $x,y \in D,$
        \begin{align*}
            \abs{
            \frac{1}{N} \sum_{n=1}^N 
                \tildeu_n(x)
                \tildeu_n(y)
                -
                \E [\tildeu(x)\tildeu(y)]} 
        \lesssim 
            \sqrt{\frac{t}{N}}
            \lor 
            \frac{t}{N}
            \lor 
            \frac{\E[\sup_{x \in D} \tildev(x)]
            }{\sqrt{N}}
            \lor 
            \frac{(\E[\sup_{x \in D} \tildev(x)])^2
            }{N},
        \end{align*}
        where $\tildev$ is the Gaussian process associated with $\tildeu.$
        \end{lemma}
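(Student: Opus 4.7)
The plan is to reduce the product empirical process to two order-$2$ multi-product empirical processes via a polarization identity, and then invoke Theorem~\ref{thm:multiproductemp} together with the subadditivity of the $\gamma_2$-functional (Lemma~\ref{lem:subadditivegamma}) and a Gaussian comparison of the indexing metric. Concretely, I would write
\begin{align*}
    \tildeu_n(x)\tildeu_n(y) = \tfrac{1}{4}\bigl[(\tildeu_n(x)+\tildeu_n(y))^2 - (\tildeu_n(x)-\tildeu_n(y))^2\bigr],
\end{align*}
so that, after centering and averaging, the supremum of the process to be controlled is bounded (up to a factor of $1/4$) by the sum of the suprema of the two order-$2$ processes indexed respectively by $\mcF_+ = \{\tildeu(x)+\tildeu(y):(x,y)\in D\times D\}\cup\{0\}$ and $\mcF_- = \{\tildeu(x)-\tildeu(y):(x,y)\in D\times D\}$. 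The class $\mcF_-$ trivially contains $0$ (take $x=y$), and we adjoin $0$ to $\mcF_+$ without changing the process. Theorem~\ref{thm:multiproductemp} with $s=2$ then yields a high-probability bound on each in terms of $d_{\psi_2}(\mcF_\pm)$ and $\gamma_2(\mcF_\pm, \psi_2)$.

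Next, I would bound these two quantities. For the sub-Gaussian diameter, since $\tildeu$ is sub-Gaussian with unit marginal second moment, the $L_4$-$L_2$ equivalence gives
\begin{align*}
    \|\tildeu(x)\pm \tildeu(y)\|_{\psi_2} \lesssim \|\tildeu(x)\pm\tildeu(y)\|_{L_2} \le 2,
\end{align*}
so $d_{\psi_2}(\mcF_\pm)\lesssim 1$. For the $\gamma_2$-functional, let $\mcG := \{\tildeu(x):x\in D\}\cup \{0\}$. The decomposition $\mcF_\pm \subseteq \mcG\pm\mcG$ and Lemma~\ref{lem:subadditivegamma} yield
\begin{align*}
    \gamma_2(\mcF_\pm, \psi_2) \lesssim \gamma_2(\mcG, \psi_2).
\end{align*}
Again by sub-Gaussianity, the $\psi_2$-metric is dominated by the $L_2$-metric on $\mcG$, hence $\gamma_2(\mcG,\psi_2)\lesssim \gamma_2(\mcG, L_2)$. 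Since $\tildeu$ and $\tildev$ share the same covariance structure, the $L_2$-metric on $\mcG$ coincides with the canonical metric of $\tildev$ on $D$, and Talagrand's majorizing measures theorem gives
\begin{align*}
    \gamma_2(\mcG, L_2) \asymp \E\bigl[\sup_{x\in D}\tildev(x)\bigr].
\end{align*}
Substituting $d_{\psi_2}\lesssim 1$ and $\gamma_2\lesssim \E[\sup_{x\in D}\tildev(x)]$ into the bound from Theorem~\ref{thm:multiproductemp} with $s=2$ reproduces the four terms in the lemma.

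I expect the main technical obstacle to be the sub-Gaussian comparison $\|\tildeu(x)-\tildeu(y)\|_{\psi_2}\lesssim \|\tildeu(x)-\tildeu(y)\|_{L_2}$, which requires interpreting the sub-Gaussian hypothesis for pointwise evaluations rather than just $L_2(D)$-inner-product functionals. This is where the Lebesgue almost-everywhere continuity assumption and pre-Gaussianity are essential: they ensure that pointwise evaluations are well-defined random variables and that the associated Gaussian process $\tildev$ captures the relevant metric structure, so that the $\gamma_2$-calculation can be done on a countable dense subset and extended by separability. Once this comparison is in place, the rest is a routine combination of the three ingredients (polarization, multi-product concentration, majorizing measures) described above.
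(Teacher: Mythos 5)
Your proposal is correct and follows essentially the same route as the paper's proof: reduce the product process to order-$2$ processes via an algebraic identity, bound $d_{\psi_2}$ by the sub-Gaussian class property, control $\gamma_2$ through Lemma~\ref{lem:subadditivegamma} and the $\psi_2$--$L_2$ comparison, identify the $L_2$ metric with the canonical metric of $\tildev$ and apply majorizing measures, then invoke Theorem~\ref{thm:multiproductemp} with $s=2$. The only cosmetic differences are your use of the polarization identity $ab=\tfrac14[(a+b)^2-(a-b)^2]$ with two index classes (requiring a harmless union bound absorbed by rescaling $t$) where the paper uses $ab=\tfrac12[a^2+b^2-(a-b)^2]$ with a single class $\{c_1\ell_x-c_2\ell_y: c_1,c_2\in\{0,1\}\}$.
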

    \begin{proof}
    For $x \in D$, let $\ell_x: v \mapsto \ell_x(v) = v(x)$ be the evaluation functional at $x \in D$. We then have
        \begin{align*}
            \sup_{x,y \in D}
            \abs{
            \frac{1}{N} \sum_{n=1}^N 
                \tildeu_n(x)
                \tildeu_n(y)
                -
                \E [\tildeu(x)\tildeu(y)]} 
            &=
            \sup_{x,y \in D}
            \abs{
            \frac{1}{N} \sum_{n=1}^N 
                \ell_x(\tildeu_n)
                \ell_y(\tildeu_n)
                -
                \E [\ell_x(\tildeu)\ell_y(\tildeu)]
                } \\
            &\le 
            \sup_{x \in D}
            \abs{
            \frac{1}{N} \sum_{n=1}^N 
                \ell^2_x(\tildeu_n)
                -
                \E [\ell^2_x(\tildeu)]
                } \\
            &+\frac{1}{2}
            \sup_{x,y \in D}
            \abs{
            \frac{1}{N} \sum_{n=1}^N 
                (\ell_x-\ell_y)^2(\tildeu_n)
                -
                \E [(\ell_x-\ell_y)^2(\tildeu)]
                } \\
            &\lesssim
            \sup_{f \in \mcF}
            \abs{
            \frac{1}{N} \sum_{n=1}^N 
                f^2(\tildeu_n)
                -
                \E f^2(\tildeu)
                },
        \end{align*}
        where the first inequality follows by the fact that for two constants $a,b$, $ab = \frac{1}{2}(a^2 + b^2 - (a-b)^2),$ and the second inequality follows for $\mcF := \{ c_1\ell_x - c_2\ell_y: x,y\in D, c_1,c_2 \in \{0,1\} \}.$ Note that $0 \in \mcF$ since we can take $c_1=c_2=0.$ We then have
        \begin{align*}
            d_{\psi_2}(\mcF)
            = \sup_{f \in \mcF} \|f\|_{\psi_2}
            \le 
             \sup_{x \in D} \| \ell_x(\tildeu_n)\|_{\psi_2}
            \lor 
            \sup_{x,y \in D} \| (\ell_x-\ell_y)(\tildeu_n)\|_{\psi_2}
            \lesssim 1.
        \end{align*}
        Define $\mcG := \{c \ell_x: x\in D, c\in \{0,1\}\},$ and note that $\mcF \subset \mcG - \mcG,$ from which we have 
        \begin{align*}
            \gamma_2(\mcF,\psi_2)
            \le 
            \gamma_2(\mcG-\mcG, \psi_2)
            \lesssim
            \gamma_2(\mcG,\psi_2)
            \lesssim
            \gamma_2(\mcG,L_2),
        \end{align*}
        where the second inequality holds by Lemma~\ref{lem:subadditivegamma} and the third inequality holds by the equivalence of $L_2$ and $\psi_2$ norms for linear functionals. Next, let $\tildev$ be the Gaussian process associated to $\tildeu$ and define
        \begin{align*}
            \mathsf{d}_{\tildev}(x,y) 
            := \sqrt{\E \Bigl[\bigl(\tildev(x)-\tildev(y)\bigr)^2 \Bigr]} 
            = \normn{\ell_x(\cdot) - \ell_y(\cdot)}_{L_2}, \qquad x,y\in D.
    \end{align*}
    Then,
    \begin{align*}
        \gamma_2(\mcG, L_2)
        \lesssim
        \gamma_2(\{\ell_x: x\in D\}, L_2)
        = 
        \gamma_2(D, \mathsf{d}_{\tildev})
        \asymp
        \E \insquare{ \sup_{x \in D} \tildev(x) },
    \end{align*}
    where the first inequality holds by the fact that for any constant $c \in \R,$ function class $\mcF$ and metric $\sfd$, $\gamma_2(c \mcF, \sfd) \le |c|\gamma_2(\mcF, \sfd),$ and the second inequality holds by the definition of $\sfd_{\tildev}$, (see also \cite[Theorem 4]{koltchinskii2017concentration}, \cite[Proposition 3.1]{al2023covariance}). The final result therefore follows by invoking Theorem~\ref{thm:multiproductemp} with $s=2.$
    \end{proof}

    \begin{lemma} \label{lem:sub-ExponentialStandardizedProductProcessGeneral}
       It holds with probability at least $1-e^{-t}$ that, for any $x,y \in D,$
        \begin{align*}
            \abs{
            \frac{1}{N} \sum_{n=1}^N 
                \tildeu_n^2(x)
                \tildeu_n^2(y)
                -
                \E [\tildeu^2(x)\tildeu^2(y)]} 
        \lesssim 
            \sqrt{\frac{t}{N}}
            \lor 
            \frac{t^2}{N}
            \lor 
            \frac{\E[\sup_{x \in D} \tildev(x)]
            }{\sqrt{N}}
            \lor 
            \frac{(\E[\sup_{x \in D} \tildev(x)])^4
            }{N},
        \end{align*}
        where $\tildev$ is the Gaussian process associated with $\tildeu.$
        \end{lemma}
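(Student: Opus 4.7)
The plan is to mirror the argument for Lemma \ref{lem:sub-GaussianStandardizedProductProcessGeneral}, but since the summands $\tildeu_n^2(x)\tildeu_n^2(y)$ are fourth-order monomials rather than bilinear in $\tildeu_n$, I would invoke Theorem \ref{thm:multiproductemp} at $s=4$ rather than $s=2$. The essential step is a fourth-order polarization that plays the role of the identity $2ab = a^2 + b^2 - (a-b)^2$ used in the sub-Gaussian case.

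Expanding $(a+b)^4 + (a-b)^4 = 2a^4 + 12\, a^2 b^2 + 2 b^4$ gives
\[
12\, a^2 b^2 \;=\; (a+b)^4 + (a-b)^4 - 2 a^4 - 2 b^4 .
\]
Writing $\ell_x(v):=v(x)$ and applying this with $a=\tildeu_n(x), b=\tildeu_n(y)$, then taking empirical and population averages, subtracting, and applying the triangle inequality, I arrive at
\[
\sup_{x,y \in D}\left|\frac{1}{N}\sum_{n=1}^N \tildeu_n^2(x)\tildeu_n^2(y) - \E[\tildeu^2(x)\tildeu^2(y)]\right|
\;\lesssim\; \sup_{f \in \mcF}\left|\frac{1}{N}\sum_{n=1}^N f^4(\tildeu_n) - \E f^4(\tildeu)\right|,
\]
where $\mcF := \{c_1 \ell_x + c_2 \ell_y : x,y \in D,\ c_1, c_2 \in \{-1,0,1\}\}$ is symmetric and contains $0$, making Theorem \ref{thm:multiproductemp} applicable.

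Next I would verify the two parameters required by that theorem. The diameter $d_{\psi_2}(\mcF) \lesssim 1$ is immediate because $\tildeu$ is sub-Gaussian with unit marginal variance, so $\|\ell_x(\tildeu)\|_{\psi_2} \lesssim 1$ uniformly in $x \in D$, and $\mcF$ consists of $\{-1,0,1\}$-combinations of two such functionals. For the $\gamma_2$ functional, the argument from Lemma \ref{lem:sub-GaussianStandardizedProductProcessGeneral} transfers verbatim: $\mcF$ embeds in $\mcG + \mcG$ with $\mcG := \{\pm \ell_x : x \in D\} \cup \{0\}$, so by the subadditivity in Lemma \ref{lem:subadditivegamma} and the equivalence of $\psi_2$ and $L_2$ norms on linear functionals of a sub-Gaussian process,
\[
\gamma_2(\mcF, \psi_2) \;\lesssim\; \gamma_2(\{\ell_x: x \in D\}, L_2) \;=\; \gamma_2(D, \sfd_{\tildev}) \;\asymp\; \E\!\left[\sup_{x \in D} \tildev(x)\right].
\]

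Finally, substituting $s=4$, $d_{\psi_2}(\mcF) \lesssim 1$, and the $\gamma_2$ bound above into Theorem \ref{thm:multiproductemp} produces exactly the four-term maximum claimed in the statement: the $\gamma_2 d_{\psi_2}^{s-1}/\sqrt{N}$ and $\gamma_2^{s}/N$ terms give the two Gaussian-width contributions, while $d_{\psi_2}^{s}(\sqrt{t/N} \vee t^{s/2}/N)$ with $s=4$ supplies the $\sqrt{t/N}\vee t^2/N$ deviation. The only conceptually new ingredient relative to the sub-Gaussian case is the fourth-order polarization identity; I expect no serious obstacle, since once this identity is in hand the diameter and $\gamma_2$ estimates carry over unchanged from Lemma \ref{lem:sub-GaussianStandardizedProductProcessGeneral}.
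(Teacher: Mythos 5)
Your proposal is correct and follows essentially the same route as the paper: the fourth-order polarization identity $12a^2b^2=(a+b)^4+(a-b)^4-2a^4-2b^4$, the symmetric class $\mcF$ of $\{-1,0,1\}$-combinations of evaluation functionals, the diameter bound $d_{\psi_2}(\mcF)\lesssim 1$, the $\gamma_2$ control via Lemma~\ref{lem:subadditivegamma} and the majorizing-measures comparison to $\E[\sup_{x\in D}\tildev(x)]$, and Theorem~\ref{thm:multiproductemp} with $s=4$. No gaps to report.
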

    \begin{proof}
    For $x \in D$, let $\ell_x: v \mapsto \ell_x(v) = v(x)$ be the evaluation functional at $x \in D$. We then have
        \begin{align*}
            \sup_{x,y \in D}
            \abs{
            \frac{1}{N} \sum_{n=1}^N 
                \tildeu_n^2(x)
                \tildeu_n^2(y)
                -
                \E [\tildeu^2(x)\tildeu^2(y)]} 
            &=
            \sup_{x,y \in D}
            \abs{
            \frac{1}{N} \sum_{n=1}^N 
                \ell_x^2(\tildeu_n)
                \ell_y^2(\tildeu_n)
                -
                \E [\ell_x^2(\tildeu)\ell_y^2(\tildeu)]
                } \\
            & \hspace{-1cm} \le 
            \frac{1}{3}
            \sup_{x \in D}
            \abs{
            \frac{1}{N} \sum_{n=1}^N 
                \ell^4_x(\tildeu_n)
                -
                \E [\ell^4_x(\tildeu)]
                } \\
            & \hspace{-1cm} +\frac{1}{12}
            \sup_{x,y \in D}
            \abs{
            \frac{1}{N} \sum_{n=1}^N 
                (\ell_x-\ell_y)^4(\tildeu_n)
                -
                \E [(\ell_x-\ell_y)^4(\tildeu)]
                } \\
                & \hspace{-1cm} +\frac{1}{12}
            \sup_{x,y \in D}
            \abs{
            \frac{1}{N} \sum_{n=1}^N 
                (\ell_x+\ell_y)^4(\tildeu_n)
                -
                \E [(\ell_x+\ell_y)^4(\tildeu)]
                } \\
            & \hspace{-1cm} \lesssim
            \sup_{f \in \mcF}
            \abs{
            \frac{1}{N} \sum_{n=1}^N 
                f^4(\tildeu_n)
                -
                \E f^4(\tildeu)
                },
        \end{align*}
        where the first inequality follows by the fact that for two constants $a,b$, $a^2b^2 = \frac{1}{12}((a+b)^4 + (a-b)^4 - 2a^4-2b^4),$ and the second inequality follows for $\mcF := \{ c_1\ell_x - c_2\ell_y: x,y\in D, c_1,c_2 \in \{-1,0,1\} \}.$ Note that $0 \in \mcF$ since we can take $c_1=c_2=0.$ We then have 
        \begin{align*}
            d_{\psi_2}(\mcF)
            = \sup_{f \in \mcF} \|f\|_{\psi_2}
            \le 
             \sup_{x \in D} \| \ell_x(\tildeu_n)\|_{\psi_2}
            \lor 
            \sup_{x,y \in D} \| (\ell_x-\ell_y)(\tildeu_n)\|_{\psi_2}
            \lesssim 1.
        \end{align*}
        Note that for $\mcG = \{ c \ell_x: x \in D, c \in \{-1,0,1\}\},$ we have $\mcF \subset \mcG- \mcG.$ Using once more the fact that for any constant $c \in \R,$ function class $\mcF$ and metric $\sfd$, $\gamma_2(c \mcF, \sfd) \le |c|\gamma_2(\mcF, \sfd),$ we have that $\gamma_2(\mcG) \lesssim \gamma_2(\{\ell_x: x\in D\}).$ By an identical argument to the one used in the proof of Lemma~\ref{lem:sub-GaussianStandardizedProductProcessGeneral}, we have $\gamma_2(\mcF, \psi_2)\lesssim \E[\sup_{x\in D} \tildeu(x)].$ The final result therefore follows by invoking Theorem~\ref{thm:multiproductemp} with $s=4.$
        \end{proof}

\begin{remark}\label{rem:genericChainingControl}
Our results are expressed in terms of the supremum of the Gaussian process $v$ associated with the observed process $u$ rather than directly in terms of the supremum of $u$. A key step in proving Lemmas~\ref{lem:sub-GaussianStandardizedProductProcessGeneral} and \ref{lem:sub-ExponentialStandardizedProductProcessGeneral} is bounding Talagrand's $\gamma$-functional, which arises from Theorem~\ref{thm:multiproductemp}. In general, the task of controlling the $\gamma$-functional efficiently is extremely difficult. One remarkable exception is Talagrand's majorizing measures theorem, which relates $\gamma_2(\mcF, \sfd)$ to the expected supremum of a Gaussian process. We leverage the pre-Gaussianity of $u$ to establish equivalence between $\gamma_2$ functionals defined with respect to $L_2(\P)$ (where $\P$ is the law of $u$) and the natural metric $\sfd_{\tildev}$ of $\tildev$. Applying Talagrand’s theorem, we obtain an upper bound in terms of $\E[\sup_{x \in D} \tildev(x)]$. Extending this approach to instead bound $\E[\sup_{x \in D} \tildeu(x)]$ from below in terms of $\gamma_2$ or finding an alternative approach that allows a bound in terms of $u$ requires further investigation which we leave to future work.
\end{remark}

\section{Lower Bound for Universal Thresholding}\label{sec:LowerBound}
This section contains the proof of Theorem~\ref{thm:UniversalThreshLowerBound}. The idea is to first reduce the covariance operator estimation problem to a finite-dimensional covariance matrix estimation problem, then apply Theorem 4 in \cite{cai2011adaptive} which proves a lower bound for universal thresholding in the finite-dimensional covariance matrix estimation problem.  The reduction is based on the recent technique developed in \cite[Proposition 2.6]{al2024optimal}.

\begin{proof}[Proof of Theorem~\ref{thm:UniversalThreshLowerBound}]
    For $m \in \N$ to be chosen later, let $\{ I_i \}_{i=1}^m$ be a uniform partition of $D$ with $\tvol(I_i) = m^{-1}.$ For any positive definite matrix $H = (h_{ij}) \in \R^{m \times m}$, define the covariance operator $C_H$ with corresponding covariance function
\begin{align*}
    k_H(x,y) = \sum_{i,j=1}^m h_{ij} \indicator_i(x)\indicator_j(y),
\end{align*}
where $\indicator_i(x) := \indicator \{ x \in I_i\}$. Then, $C_H$ is a positive definite operator since, for any $\psi \in L_2(D),$ 
\begin{align*}
    \int_{ D \times D} k_H(x,y) \psi(x) \psi(y) dxdy 
    =  \inp{H \bar{\psi},\bar{\psi}} > 0,
\end{align*}
where $\bar{\psi}= (\bar{\psi}_1,\dots, \bar{\psi}_m)^\top$ and $\bar{\psi}_i = \int_{I_i} \psi(x)dx.$ Note further that for $u_n \sim \text{GP}(0, C_H)$, $u_n$ is almost surely a piecewise constant function that can be written as $u_n(x) = \sum_{i=1}^m z_n^{(i)} \indicator_i(x),$ for $(z_n^{(1)},\dots, z_n^{(m)}) \sim N(0,H).$ Consider next the sparse covariance matrix class $\mcU^*_q(m, R_q)$ studied in \cite{cai2011adaptive} and defined in \eqref{eq:SparseWeightedMatrixClass}. For any $H \in \mcU^*_q(m, m^{1/q}R_q)$ it holds that $C_H \in \mcK_q^*(R_q)$. 
\begin{align*}
    &\sup_{x\in D} \int_D (k_H(x,x)k_H(y,y))^{(1-q)/2} |k_H(x,y)|^q dy\\
    &=
    \sup_{x\in D} \int_D \sum_{i,j=1}^m  (h_{ii} h_{jj})^{(1-q)/2}|h_{ij}|^q \indicator_i(x) \indicator_j(y) dy\\
    &= 
    \max_{i \le m } \int_D \sum_{j=1}^m  (h_{ii} h_{jj})^{(1-q)/2}|h_{ij}|^q \indicator_j(y) dy\\
    &= 
    \max_{i \le m } \sum_{j=1}^m  (h_{ii} h_{jj})^{(1-q)/2}|h_{ij}|^q \int_D  \indicator_j(y) dy\\
    &= 
    \frac{1}{m}
    \max_{i \le m } \sum_{j=1}^m  (h_{ii} h_{jj})^{(1-q)/2}|h_{ij}|^q \le  R_q^q.
\end{align*}
    Further, for $H \in \mcU^*_q(m, R_q)$, we have that $m H \in \mcU^*_q(m,m^{1/q} R_q).$ Now, we will choose $\tildeH_0 = (\tildeh_{0, ij})\in \mcU^*_q(m, R_q)$ to be the covariance matrix constructed in the proof of \cite[Theorem 4]{cai2011adaptive}. Namely, let $s_1 = \lceil (R_q^q-1)^{1-q} (\log m/N)^{-q/2}\rceil +1$, and set 
    \begin{align*}
        \tildeh_{0,ij} = 
        \begin{cases}
            1 \qquad &\text{if}\qquad  1 \le i=j \le s_1,\\
            R_q^q \qquad &\text{if}\qquad  s_1+1 \le i=j \le m,\\
            4^{-1} R_q^q\sqrt{\log m/N}&\text{if}\qquad  1 \le i\neq j \le s_1,\\
            0 &\text{otherwise}.
        \end{cases}
    \end{align*}
    Then, $H_0 := m \tildeH_0 \in \mcU_q^*(m,m^{1/q} R_q)$ and $C_0 := C_{H_0} \in \mcK_q^*(R_q)$. Next, let $\hatC^{\mathsf{U}}_{\gamma_N}$ have covariance function $\hatt_{\gamma_N}(x,y) = \hatk(x,y) \indicator \{|\hatk(x,y)| \ge \gamma_N\},$ i.e. $\hatC^{\mathsf{U}}_{\gamma_N}$ is the (universal) thresholding covariance estimator with threshold $\gamma_N$. By definition of the operator norm,
    \begin{align*}
        \normn{\hatC^{\mathsf{U}}_{\gamma_N} - C_{H_0}}
        &=
        \sup_{\|f\|_{L_2(D)}=\|g\|_{L_2(D)}=1} 
        \int f(x) \inparen{
        \int (\hatt_{\gamma_N}(x,y) - k_{H_0}(x,y)) g(y)dy
        }dx\\
        &\ge  
        \sup_{a, b \in \mcS_{m-1}} 
        \int f_a(x) \inparen{
        \int (\hatt_{\gamma_N}(x,y) - k_{H_0}(x,y)) f_b(y)dy
        }dx\\
        &=
        \sup_{a, b \in \mcS_{m-1}} 
        m
        \sum_{i,j=1}^m
        a_i b_j
        \iint_{D \times D} 
        1_i(x) 1_j(y) (\hatt_{\gamma_N}(x,y) - k_{H_0}(x,y)) dy
        dx\\
        &=
        \sup_{a, b \in \mcS_{m-1}} 
        m
        \sum_{i,j=1}^m
        a_i b_j
        \inparen{
        \iint_{I_i \times I_j} \hatt_{\gamma_N}(x,y) dxdy - m^{-2} h_{0,ij}}\\
         &=
        m
        \sup_{a, b \in \mcS_{m-1}} 
        \inp{
        a,
        \inparen{\hatT_{m, \gamma_N}- m^{-2} H_0 }
        b
        }\\
         &=
        \normn{
            m  \hatT_{m, \gamma_N} -  m^{-1}H_0 
        } =
        \normn{
            m  \hatT_{m, \gamma_N} - \tildeH_0 
        },
    \end{align*}
where $f_a(x)  := \sqrt{m} \sum_{i=1}^m a_i \indicator_i(x)$ and the lower bound holds since $a$ is a unit vector and therefore $\|f_a\|_{L_2(D)} = 1$. $f_b$ is defined analogously. Note further that we have defined the $m \times m$ matrix $\hatT_{m, \gamma_N} $ with $(i,j)$-th element
\begin{align*}
    \iint_{I_i \times I_j} \hatt_{\gamma_N}(x,y) dxdy 
    &= 
    \iint_{I_i \times I_j} 
    \frac{1}{N} \sum_{n=1}^N u_n(x) u_n(y) \indicator \{|\hatk(x,y) \ge \gamma_N|\}
    dxdy,
\end{align*}
where $u_1,\dots, u_n \iid \text{GP}(0, C_{H_0})$. Define $v_n = m^{-1/2} u_n$ for $n=1,\dots, N$, and so $v_1,\dots, v_n \iid \text{GP}(0, C_{\tildeH_0}).$ Then, we have 
\begin{align*}
    \inf_{\gamma_N \ge 0}  
    \E_{\{u_n\}_{n=1}^N \iid \text{GP}(0, C_{H_0})} 
    \normn{\hatC^{\mathsf{U}}_{\gamma_N} - C_{H_0}}
    & \ge 
    \inf_{\gamma_N \ge 0} 
    \E_{\{u_n\}_{n=1}^N \iid \text{GP}(0, C_{H_0})} 
    \normn{
        m  \hatT_{m, \gamma_N} - \tildeH_0 
    }\\
    &=
    \inf_{\gamma_N \ge 0} 
    \E_{\{v_n\}_{n=1}^N \iid \text{GP}(0, C_{\tildeH_0})} 
    \normn{
        \tildeT_{m, \gamma_N} - \tildeH_0
    },
\end{align*}
where $\tildeT_{m, \gamma_N} := m T_{m, \gamma_N}$. Since the samples $u_n$ are piecewise constant functions, we have for any block $I_i \times I_j$  for which the indicator is equal to 1 that
\begin{align*}
    (\tildeT_{m, \gamma_N})_{ij}
    &=
    m^2 
    \iint_{I_i \times I_j} 
    \frac{1}{N} \sum_{n=1}^N v_n(x) v_n(y) \indicator \{|\hatk(x,y)| \ge \gamma_N\}
    dxdy\\
    &=
    m^2 
    \iint_{I_i \times I_j} 
    \frac{1}{N} \sum_{n=1}^N v_n(x) v_n(y) 
    dxdy\\
    &=
    m^2 
    \iint_{I_i \times I_j} 
    \frac{1}{N} \sum_{n=1}^N w_n^{(i)} w_n^{(j)}
    dxdy =
    \frac{1}{N} \sum_{n=1}^N w_n^{(i)} w_n^{(j)},
\end{align*}
for $w_1,\dots, w_N \iid N(0,\tildeH_0)$ with $w_n = \bigl(w_n^{(1)},\dots, w_n^{(m)}\bigr)$ for $1 \le n \le N$. Therefore, $T_{m, \gamma_N}$ is a universally thresholded sample covariance matrix estimator. (Note that the scaling inside the indicator is not an issue, as the infimum is over all positive $\gamma_N$.) It follows immediately by \cite[Theorem 4]{cai2011adaptive} that if $m$ is chosen to satisfy $N^{5q} \le m \le e^{o(N^{1/3})}$ and $8 \le R_q^q \le \min \Bigl\{m^{1/4}, 4 \sqrt{\frac{N}{\log m}} \Bigr\}$, then, for sufficiently large $N,$ 
\begin{align*}
    \inf_{\gamma_N \ge 0} 
    \E_{\{v_n\}_{n=1}^N \iid \text{GP}(0, C_{\tilde{H}_0})} 
    \normn{
        \tildeT_{m, \gamma_N} - \tildeH_0
    }
    \gtrsim
    (R_q^q)^{2-q} \inparen{\frac{\log m}{N}}^{(1-q)/2}.
\end{align*}

     Note then that for $u \sim \text{GP}(0, C_{H_0})$, we have that $u(x) = \sum_{i=1}^m z^{(i)} \indicator_i(x),$ for $z= \bigl(z^{(1)}, \dots, z^{(m)}\bigr) \sim N(0,H_0).$ Further, the normalized process $\tildeu(x)$ is of the form $\tildeu(x) = u(x)/\sqrt{k_{H_0}(x,x)} = \sum_{i=1}^m g^{(i)} \indicator_i(x),$ for $g= \bigl(g^{(1)}, \dots, g^{(m)}\bigr) \sim N(0,D_0)$ where $D_0$ has elements $d_{0, ij} = m\tildeh_{0, ij} / \sqrt{\tvar(z^{(i)}z^{(j)})}$. By \cite[Lemma 2.3]{van2017spectral},
     \begin{align*}
         \E \insquare{\sup_{x \in D} \tildeu(x)}
         = \E \insquare{\max_{i \le m} g^{(i)}}
         \lesssim 
         \max_{i \le m} \sqrt{d_{0, ii} \log (i+1)}.
     \end{align*}
     Moreover, for $1 \le i \le m,$ using that  $\E[(z^{(i)})^\zeta] = (\zeta-1)!! (\tvar(z^{(i)}))^{\zeta/2}$ for any non-negative even integer $\zeta$, we have 
     \begin{align*}
         \tvar \bigl((z^{(i)})^2\bigr)
         &= \E\insquare{(z^{(i)})^4} -\Bigl(\E\insquare{(z^{(i)})^2}\Bigr)^2
         = 3 m^2\tildeh^2_{0,ii} - m^2\tildeh^2_{0,ii}
         = m^2\tildeh^2_{0,ii}.
     \end{align*}
     Therefore, 
     \begin{align*}
         d_{0, ii} = 
         \frac{m \tildeh_{0,ii}}{ \sqrt{\tvar((z^{(i)})^2)}}
         = \frac{1}{\sqrt{2}}.
     \end{align*}
     Plugging this in yields $ \E [\sup_{x \in D} \tildeu(x)] \lesssim \sqrt{\log m}$ and so the bound becomes
     \begin{align*}
         (R_q^q)^{2-q} \inparen{\frac{\log m}{N}}^{(1-q)/2}
         \gtrsim
         (R_q^q)^{2-q} \inparen{\frac{ (\E [\sup_{x \in D} \tildeu(x)] )^2 }{N}}^{(1-q)/2},
     \end{align*}
     as desired.
\end{proof}

\section{Error Analysis for Nonstationary Weighted Covariance Models}\label{sec:NonstationaryWeightedCovAnalysis}
Throughout this section, $u$ denotes a centered Gaussian process on $D=[0,1]^d$ with covariance function $k_\lambda(x,y)$ satisfying Assumptions~\ref{ass:isotropicandlengthscale} and \ref{ass:WeightFunction}. We make repeated use of the following easily verifiable facts:
\begin{enumerate}
    \item $\tildek_\lambda(r) = \tildek_1(\lambda^{-1}r).$
    \item $\sigma_\lambda(\lambda x; \alpha) = \sigma_1(\lambda^{1-\alpha/2}x; \alpha).$
    \item For any $\lambda>0$, $\alpha \in (0,1/2)$, $1 \le \sigma_{\lambda}(x; \alpha) \le \exp(d /\lambda^\alpha )$. 
\end{enumerate}
In this section, we use the notation ``$(E), ~\lambda \to 0^+$'' to mean that there is a universal constant $\lambda_0>0$ such that if $\lambda < \lambda_0$, then $(E)$ holds. We interchangeably use the term ``\textit{for sufficiently small $\lambda$.}''

\begin{lemma} \label{lem:TraceBound}
    It holds that
    \begin{align*}
        \ttrace(C) 
        =  2^{-d/2} \tildek(0)\lambda^{\alpha d/2} \inparen{\int_0^{\sqrt{2/\lambda^{\alpha}}} e^{t^2} dt}^d.
    \end{align*}
    Therefore, for sufficiently small $\lambda$, it holds that 
    \begin{align*}
        \ttrace(C) 
        \asymp 
        \tildek(0)  \lambda^{\alpha d} e^{2d /\lambda^\alpha}.
    \end{align*}
\end{lemma}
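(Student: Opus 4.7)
The plan is to use the standard trace formula for positive trace-class integral operators with continuous kernel, namely $\ttrace(C) = \int_D k_\lambda(x,x)\,dx$, and then evaluate the resulting integral explicitly by exploiting the product structure of the hypercube $D=[0,1]^d$. Specializing the weighted kernel \eqref{eq:nonstationaryKernels} with the marginal variance function from Assumption~\ref{ass:WeightFunction}, one has $k_\lambda(x,x) = \sigma_\lambda^2(x;\alpha)\tildek_\lambda(0) = \tildek(0)\exp(2\lambda^{-\alpha}\|x\|^2)$, where we used the fact from Assumption~\ref{ass:isotropicandlengthscale} that $\tildek_\lambda(0)=\tildek(0)$ is independent of $\lambda$.

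Because $\|x\|^2 = \sum_{i=1}^d x_i^2$, the integrand factorizes coordinate-wise:
\begin{equation*}
    \ttrace(C) = \tildek(0) \prod_{i=1}^d \int_0^1 \exp\!\bigl(2\lambda^{-\alpha} x_i^2\bigr)\,dx_i.
\end{equation*}
The substitution $t = \sqrt{2/\lambda^\alpha}\,x_i$ (so that $dx_i = \sqrt{\lambda^\alpha/2}\,dt$) converts each one-dimensional integral into $\sqrt{\lambda^\alpha/2}\int_0^{\sqrt{2/\lambda^\alpha}} e^{t^2}\,dt$, and taking the product over $i=1,\dots,d$ gives the displayed closed-form expression $\ttrace(C) = 2^{-d/2}\tildek(0)\lambda^{\alpha d/2}\bigl(\int_0^{\sqrt{2/\lambda^\alpha}} e^{t^2}\,dt\bigr)^d$.

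For the asymptotic as $\lambda\to 0^+$, I would invoke the classical asymptotic for the imaginary error function, $\int_0^M e^{t^2}\,dt \asymp e^{M^2}/(2M)$ as $M\to\infty$, which can be established by splitting the integral at $t=1$ and integrating by parts on $[1,M]$ using $e^{t^2} = (2t)^{-1}(e^{t^2})'$, so the leading-order term is $e^{M^2}/(2M)$ and the remainder is lower order. Setting $M = \sqrt{2/\lambda^\alpha}$ gives $\int_0^M e^{t^2}\,dt \asymp \sqrt{\lambda^\alpha/8}\,e^{2/\lambda^\alpha}$ for sufficiently small $\lambda$; raising to the $d$-th power and combining with the prefactor $2^{-d/2}\tildek(0)\lambda^{\alpha d/2}$ collapses the $\lambda^{\alpha d/2}\cdot\lambda^{\alpha d/2}$ factors into a single $\lambda^{\alpha d}$, yielding $\ttrace(C) \asymp \tildek(0)\lambda^{\alpha d} e^{2d/\lambda^\alpha}$ up to an absolute constant depending only on $d$.

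There is no real obstacle here: the argument is essentially a change of variables together with a standard Laplace-type asymptotic. The only care needed is in the integration-by-parts step justifying the $\asymp$ estimate for $\int_0^M e^{t^2}\,dt$, which is routine but worth stating explicitly since both the upper and lower bounds are required to conclude $\asymp$ rather than merely $\lesssim$.
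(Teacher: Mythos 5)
Your proposal is correct and follows essentially the same route as the paper: compute $\ttrace(C)=\int_D k_\lambda(x,x)\,dx$, factor the integral coordinate-wise, and rescale to obtain the closed form $2^{-d/2}\tildek(0)\lambda^{\alpha d/2}\bigl(\int_0^{\sqrt{2/\lambda^\alpha}}e^{t^2}\,dt\bigr)^d$. The only (cosmetic) difference is in the asymptotic step: the paper rewrites $\int_0^{\sqrt{2/\lambda^\alpha}}e^{t^2}\,dt = e^{2/\lambda^\alpha}\mcD(\sqrt{2/\lambda^\alpha})$ and cites the series expansion of the Dawson function, whereas you prove the equivalent estimate $\int_0^M e^{t^2}\,dt\asymp e^{M^2}/(2M)$ directly by integration by parts, which is a perfectly valid self-contained substitute.
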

\begin{proof}
    By definition, 
     \begin{align*}
        \ttrace(C) 
        = \int_D k_\lambda(x,x) dx 
        = \tildek(0) \int_D \sigma^2_\lambda(x)  dx 
        = \tildek(0) \normn{\sigma_\lambda}^2_{L_2(D)}.
    \end{align*}
    Then, note that 
    \begin{align*}
        \normn{\sigma_\lambda}^2_{L_2(D)} 
        &= \int_D \exp(2\lambda^{-\alpha} \normn{x}^2) dx 
        =\int_D \exp(2\lambda^{-\alpha} \sum_{j=1}^d x_j^2) dx \\
        &=\prod_{j=1}^d \int_0^1 \exp(2 \lambda^{-\alpha} x_j^2) dx 
        = 2^{-d/2} \lambda^{\alpha d/2} \inparen{
        \int_0^{\sqrt{2/\lambda^{\alpha}}} e^{t^2} dt
        }^d.
    \end{align*}
    In the last line of the above working, we have used the fact that $\int_0^1 e^{cz^2}dz= \frac{1}{\sqrt{c}} \int_0^{\sqrt{c}} e^{t^2}dt.$ 
    Then, we have
        \begin{align*}
            \int_0^{\sqrt{2/\lambda^{\alpha}}} e^{t^2} dt
            = e^{2/\lambda^\alpha} 
            \mcD (\sqrt{2 / \lambda^\alpha}),
        \end{align*}
        where $\mathcal{D}(\cdot)$ is the Dawson function. By \cite[Section 7.1]{abramowitz1968handbook}, it holds for $\lambda$ sufficiently small
        \begin{align*}
            \mathcal{D} (\sqrt{2 / \lambda^\alpha})
            = 
            \frac{1}{2\sqrt{2}} \lambda^{\alpha/2}
            +
            \frac{1}{8 \sqrt{2}} \lambda^{3\alpha/2}
            +
            \frac{3}{32 \sqrt{2}} \lambda^{5\alpha/2} + \cdots
            \asymp
            \lambda^{\alpha/2}.
        \end{align*}
        Therefore, for $\lambda$ sufficiently small, 
        \begin{align*}
             \normn{\sigma_\lambda}^2_{L_2(D)} 
             \asymp
             2^{-d/2} \lambda^{\alpha d /2} e^{2d/\lambda^\alpha} \lambda^{\alpha d/2}
             =
             \lambda^{\alpha d} e^{2d/\lambda^\alpha}. 
        \end{align*}
        
\end{proof}

\begin{lemma}\label{lem:supklambda_positiveexponential} 
It holds that
    \begin{align*}
        \sup_{x\in D} \int_D |k_\lambda(x,y)|^q dy  
        \asymp 
        q^{-d}
        \lambda^{d(\alpha+1)} 
        e^{2d q /\lambda^{\alpha}}
     \int_0^\infty r^{d-1}\tildek_1(r)^q dr, 
    \qquad \lambda \to 0^+.
    \end{align*}
\end{lemma}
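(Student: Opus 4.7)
The plan is to evaluate the supremum of
$$G_\lambda(x) := \int_D |k_\lambda(x,y)|^q\,dy = \sigma_\lambda(x)^q\int_D \sigma_\lambda(y)^q\tildek_\lambda(\|x-y\|)^q\,dy$$
via a two-scale asymptotic analysis as $\lambda \to 0^+$, exploiting the product structure of $k_\lambda$.

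First I would localize the supremum to a neighborhood of the corner $x^\star := (1,\ldots,1)$. Since $\sigma_\lambda(x) = \exp(\|x\|^2/\lambda^\alpha)$ is strictly increasing in $\|x\|$ on $D$, and the squared factor $\sigma_\lambda(x)^q\sigma_\lambda(y)^q$ dominates the integrand, a straightforward monotonicity argument shows that the maximizer of $G_\lambda$ lies in an $O(\lambda^\alpha/q)$-neighborhood of $x^\star$, which is the characteristic decay scale of $\sigma_\lambda^q$ away from its maximum.

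Second, I would introduce two rescaled variables to isolate the kernel and weight scales. The weight decays on the coarse scale $\lambda^\alpha/q$ (since the exponent $-2q\langle\mathbf{1},s\rangle/\lambda^\alpha$ becomes $O(1)$ at that displacement), whereas $\tildek_\lambda$ concentrates on the fine scale $\lambda$; under Assumption~\ref{ass:WeightFunction} with $\alpha < 1/2 < 1$, the fine scale is much smaller. Accordingly I would set $x = x^\star - (\lambda^\alpha/q)\xi$ and $y = x^\star - (\lambda^\alpha/q)\eta$, yielding
$$\sigma_\lambda(x)^q\sigma_\lambda(y)^q = e^{2qd/\lambda^\alpha}\exp\bigl(-2\langle\mathbf{1},\xi+\eta\rangle\bigr)\bigl(1+o(1)\bigr)$$
on the effective support, where the $o(1)$ absorbs the quadratic correction $(\lambda^\alpha/q)\|\eta\|^2$. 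I would then substitute $\eta = \xi + (q\lambda^{1-\alpha})\zeta$ to make the kernel argument $\tildek_1(\|\zeta\|)$ scale-free. The chained Jacobians are $(\lambda^\alpha/q)^d$ and $(q\lambda^{1-\alpha})^d$, producing $\lambda^d$ overall; the $q^{-d}\lambda^{\alpha d}$ factor in the claim will come from the integration in $\xi$ against the factor $e^{-4\langle\mathbf 1,\xi\rangle}$ together with the rescaling from the coarse variable.

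Third, I would pass to the limit in the rescaled integral. For $\xi$ in the interior of $\R_+^d$, the $\zeta$-integration domain becomes $\R^d$ and reduces by polar coordinates to $\omega_{d-1}\int_0^\infty r^{d-1}\tildek_1(r)^q\,dr$, while the $\xi$-integration against $e^{-4\langle\mathbf 1,\xi\rangle}$ on $\R_+^d$ produces the remaining constant. Collecting factors,
$$\sup_{x\in D}G_\lambda(x)\asymp e^{2qd/\lambda^\alpha}\cdot \lambda^d \cdot (\text{const})\,q^{-d}\lambda^{\alpha d}\int_0^\infty r^{d-1}\tildek_1(r)^q\,dr.$$
Matching lower bounds would come from evaluating $G_\lambda$ at an explicit $x$ of the form $x^\star - c(\lambda^\alpha/q)\mathbf{1}$ and bounding the rescaled integrand from below by a $\lambda$-independent multiple of its limit, using the monotonicity of $\tildek_1$ and the uniform continuity of the exponential corrections.

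The main obstacle will be the careful bookkeeping of the two-scale decoupling and, especially, the boundary effects at $\xi\to 0^+$ where the $\zeta$-integration domain $\Omega(x)\subset\R^d$ transitions from a half-space to a full space: verifying that this transition contributes the correct $q^{-d}\lambda^{\alpha d}$ prefactor (rather than merely $\lambda^d$) requires a uniform Laplace-type estimate in $\xi$, and one must further check that the corrections $\exp(2q\lambda^{1-\alpha}\langle x,\zeta\rangle + q\lambda^{2-\alpha}\|\zeta\|^2)$ are $1+o(1)$ uniformly on the $\tildek_1^q$-effective support, which is where the restriction $\alpha<1/2$ enters quantitatively through the quadratic term in $\sigma_\lambda$.
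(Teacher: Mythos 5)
Your route (a two--scale Laplace analysis that localizes the supremum at the corner $x^\star=\mathbf{1}$) is genuinely different from the paper's proof, which never localizes in $x$: there the upper bound is obtained by pulling out $\sup_{x\in D}e^{q\|x\|^2/\lambda^\alpha}=e^{qd/\lambda^\alpha}$, rescaling $y\mapsto\lambda^{-1}y$, and decoupling the increasing weight $e^{q\lambda^{2-\alpha}\|y\|^2}$ from the decreasing factor $\tildek_1(\|y\|)^q$ via the covariance (Chebyshev association) inequality, with Dawson-function asymptotics supplying the factor $q^{-d}\lambda^{-d(1-\alpha)}e^{qd/\lambda^\alpha}$; the lower bound is obtained by bounding the supremum below by the double integral over $D\times D$ and a sum/difference change of variables.

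The genuine gap in your proposal is in the final bookkeeping: the coarse variable $\xi$ parametrizes $x$, over which the lemma takes a \emph{supremum}, yet in your third step you \emph{integrate} $e^{-4\langle\mathbf{1},\xi\rangle}$ over $\xi\in\R_+^d$, and it is precisely this integration (with its Jacobian $(\lambda^\alpha/q)^d$) that manufactures the factor $q^{-d}\lambda^{\alpha d}$. In effect you have replaced $\sup_{x\in D}$ by $\int_D dx$. If you instead take the supremum in $\xi$ (i.e.\ evaluate at $x$ within $O(\lambda)$ of the corner), your own two-scale computation produces no such factor; indeed, restricting the $y$-integral to $\{y\in D:\|y-\mathbf{1}\|\le\lambda\}$, where $\tildek_\lambda\ge\tildek_1(1)$ and the weight corrections are $e^{-O(\lambda^{1-\alpha})}$, gives
\begin{align*}
  \int_D |k_\lambda(\mathbf{1},y)|^q\,dy \;\gtrsim\; \tildek_1(1)^{q}\,\lambda^{d}\,e^{2qd/\lambda^{\alpha}},
\end{align*}
which exceeds the claimed rate by the diverging factor $q^{d}\lambda^{-\alpha d}$. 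So the proposal as written does not establish the upper-bound half of the asymptotic equivalence, and the boundary-transition estimate you flag as the "main obstacle" cannot repair this: the missing $q^{-d}\lambda^{\alpha d}$ is exactly the Laplace factor of an average over $x$ (this is how it arises in the paper, whose lower bound passes to $\iint_{D\times D}|k_\lambda|^q$ and picks up this factor from one-dimensional integrals of $e^{q\lambda^{2-\alpha}y_j^2}$ over $[0,\lambda^{-1}]$), not of a supremum. Before the corner-localization route can be salvaged you must reconcile the displayed corner evaluation with the stated upper bound — as it stands, your own localization argument is in direct tension with the rate you are trying to prove, and the step where you convert $\sup_\xi$ into $\int d\xi$ is where the argument breaks.
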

\begin{proof}
    Starting with the upper bound, we have 
\begin{align*}
    \sup_{x\in D} \int_D |k_\lambda(x,y)|^q dy
    &=
    \sup_{x\in D} 
    \int_D 
    e^{q \|x\|^2/\lambda^\alpha}e^{q \|y\|^2/\lambda^\alpha} 
    |\tildek_\lambda(\normn{x-y})|^q dy\\
    &\le 
    \sup_{x\in D} e^{q \|x\|^2/\lambda^\alpha}
    \sup_{x\in D}
    \int_D 
    e^{q \|y\|^2/\lambda^\alpha} 
    |\tildek_\lambda(\normn{x-y})|^q dy\\
    &=
    e^{d q/\lambda^\alpha}
    \int_D 
    e^{q \|y\|^2/\lambda^\alpha} 
    |\tildek_\lambda(\normn{y})|^q dy\\
    &=
    e^{d q/\lambda^\alpha}
    \int_D 
    e^{q \|y\|^2/\lambda^\alpha} 
    |\tildek_1(\normn{ \lambda^{-1} y})|^q dy\\
    &=
    \lambda^{d}  
    e^{d q/\lambda^\alpha}
    \int_{[0, \lambda^{-1}]^d }
    e^{q \lambda^{2-\alpha} \| y\|^2} 
    |\tildek_1(\normn{ y})|^q dy,
    \end{align*}
    where the last line follows by substituting $y \mapsto \lambda^{-1}y$, and noting that the transformation has Jacobian $\lambda^d$. Therefore, treating $y = (y_1,\dots, y_d)$ as a random vector with $y_1,\dots, y_d \iid \text{unif}([0, \lambda^{-1}])$
    \begin{align*}
     \sup_{x\in D} \int_D |k_\lambda(x,y)|^q dy &\le
    e^{d q/\lambda^\alpha}
    \inparen{\lambda^{d} 
    \int_{[0, \lambda^{-1}]^d }
    e^{q \lambda^{2-\alpha} \| y\|^2} 
    |\tildek_1(\normn{ y})|^q dy}\\
    &=
    e^{d q/\lambda^\alpha}
    \E_y [
    e^{q \lambda^{2-\alpha} \| y\|^2} 
    |\tildek_1(\normn{ y})|^q
    ]\\
    &\le  
    e^{d q/\lambda^\alpha}
    \E_y [
    e^{q \lambda^{2-\alpha} \| y\|^2}
    ]
    \E_y [
    |\tildek_1(\normn{ y})|^q
    ]\\
    &=
    e^{d q/\lambda^\alpha}
    \inparen{
        \lambda^d
        \int_{[0,\lambda^{-1}]^d}
        e^{q \lambda^{2-\alpha} \| y\|^2}
        dy
    }
    \inparen{
    \lambda^d
        \int_{[0,\lambda^{-1}]^d}
        |\tildek_1(\|y\|)|^q
        dy
    }\\
    &=
    \lambda^{2d}  
    e^{d q/\lambda^\alpha}
    \inparen{
        \int_{[0,\lambda^{-1}]^d}
        e^{q \lambda^{2-\alpha} \| y\|^2}
        dy
    }
    \inparen{
        \int_{[0,\lambda^{-1}]^d}
        |\tildek_1(\|y\|)|^q
        dy
    }\\
    &=: \lambda^{2d}  
    e^{d q/\lambda^\alpha}
    I_1 \times I_2,
\end{align*}
where we have used the covariance inequality \cite[Theorem 4.7.9]{berger2001statistical}, which states that for non-decreasing $g$ and non-increasing $h$, $\E_y [g(y)h(y)] \le (\E_y [g(y)]) (\E_y [h(y)])$. For the first integral, using the substitution $v_j^2 = q \lambda^{2-\alpha} y_j^2$ yields
\begin{align*}
    I_1
    &=  
    \prod_{j=1}^d 
    \int_{0}^{\lambda^{-1}}
    e^{q \lambda^{2-\alpha} y_j^2} dy_j
    =  
    \prod_{j=1}^d 
    \frac{1}{ \sqrt{q \lambda^{2-\alpha}}}
    \int_{0}^{\lambda^{-1}\sqrt{q \lambda^{2-\alpha}}}
    e^{v_j^2} dv_j\\
    &=  
    \inparen{
    \frac{
    \exp (\lambda^{-2} q \lambda^{2-\alpha} )
    }{ \sqrt{q \lambda^{2-\alpha}}}
    \mcD(\lambda^{-1}\sqrt{q \lambda^{2-\alpha}})
    }^d
    \asymp 
    \inparen{
    \frac{
    \exp (\lambda^{-2} q \lambda^{2-\alpha} )
    }{ \sqrt{q \lambda^{2-\alpha}}}
    \frac{1}{\lambda^{-1}\sqrt{q \lambda^{2-\alpha}}} }^d\\
    &=
    \inparen{
    \frac{
    \exp (\lambda^{-2} q \lambda^{2-\alpha} )
    }{ 
    \lambda^{-1} q \lambda^{2-\alpha}}}^d
    =
    \inparen{
    \frac{
    \exp (q \lambda^{-\alpha} )
    }{ 
    q \lambda^{1-\alpha}}}^d
    \asymp 
    \frac{
    e^{d q/\lambda^\alpha}
    }{ 
    q^d \lambda^{d(1-\alpha)}},
\end{align*}
where $\mcD(x)$ is the Dawson function, and we have used the fact that $ \mcD(x) \asymp x^{-1}$ for $x \to \infty$ 
(see the proof of Lemma~\ref{lem:TraceBound}). For the second integral, by switching to polar coordinates, we have 
\begin{align*}
    I_2 
    &\le 
    \int_{\R^d} |\tildek_1(\|y\|)|^q dy
    =
    \int_{0}^\infty r^{d-1} \int_{\mcS_{d-1}} |\tildek_1(r\|u\|)|^q d\mfs_{d-1}(u) dr\\
    &=
    \int_{0}^\infty r^{d-1} \tildek_1(r)^q
    \int_{\mcS_{d-1}}  d\mfs_{d-1}(u) dr
    = A(d) \int_{0}^\infty r^{d-1} \tildek_1(r)^q dr,
\end{align*}
where we have used the fact that $\normn{u}=1$ for any $u \in \mcS_{d-1}$, and $A(d)$ denotes the surface area of the unit sphere in $\R^d$. Therefore, we have that
\begin{align*}
    \sup_{x\in D} \int_D |k_\lambda(x,y)|^q dy
    &\le  \lambda^{2d}  
    e^{d q/\lambda^\alpha}
    I_1 \times I_2\\
    &\le  \lambda^{2d}  
    e^{d q/\lambda^\alpha}
    \frac{
    e^{d q/\lambda^\alpha}
    }{ 
    q^d\lambda^{d(1-\alpha)}}
    A(d) \int_{0}^\infty r^{d-1} \tildek_1(r)^q dr\\
    &=  q^{-d}\lambda^{d(1+\alpha)}  
    e^{2d q/\lambda^\alpha}
    A(d) \int_{0}^\infty r^{d-1} \tildek_1(r)^q dr.
\end{align*}

For the lower bound, note that 
\begin{align*}
    \sup_{x\in D} \int_D |k_\lambda(x,y)|^q dy 
    &\ge 
    \int_{D\times D}|k_\lambda(x,y)|^q dx dy
    =
    \int_{[0,1]^d \times [0,1]^d} \sigma_{\lambda}^q(x)\sigma_{\lambda}^q(y)|\tildek_\lambda( \normn{x-y} )|^q dx dy \\
    &=
    \int_{[0,1]^d \times [0,1]^d} \sigma_{\lambda}^q(x)\sigma_{\lambda}^q(y)|\tildek_1( \lambda^{-1} \normn{x-y} )|^q dx dy\\
    &=
    \lambda^{2d} 
    \int_{[0,\lambda^{-1}]^d \times [0,\lambda^{-1} ]^d} \sigma_{\lambda}^q(\lambda x')\sigma_{\lambda}^q(\lambda y')| \tildek_1(\normn{x'-y'} )|^q dx' dy',
\end{align*}
where the last line is due to the substitution $x'=\lambda^{-1}x$ and $y' = \lambda^{-1}y$. Now, let $w = x' - y'$ and $z = x' + y'$ and note that the Jacobian of this transformation is $2^{-d}$, and also that $x' = (z+w)/2$ and $y' = (z-w)/2$. Continuing from the last line of the above display, the substitution and the fact that $\sigma_\lambda(\lambda x') = \sigma_1(\lambda^{1-\alpha/2}x') = \exp(\lambda^{2-\alpha} \|x'\|^2)$ give that 
\begin{align*}
    &=
    \frac{\lambda^{2d}}{2^{d}  }
    \int_{[-\lambda^{-1},0]^d} 
    \inparen{
    \int_{ R_1 }
    \exp \inparen{\frac{q\lambda^{2-\alpha}}{4} \normn{z+w}^2 } 
    \exp \inparen{\frac{q\lambda^{2-\alpha}}{4} \normn{z-w}^2 }
     dz }
    | \tildek_1(\normn{w} )|^q dw\\
    & + 
    \frac{\lambda^{2d}}{2^{d}  }
    \int_{[0,\lambda^{-1}]^d} 
    \inparen{
    \int_{ R_2 }    
    \exp \inparen{\frac{q\lambda^{2-\alpha}}{4} \normn{z+w}^2 } 
    \exp \inparen{\frac{q\lambda^{2-\alpha}}{4} \normn{z-w}^2 }
     dz}
    | \tildek_1(\normn{w} )|^q dw,
\end{align*}
where we have defined
\begin{align*}
    R_1 &= \{z: -w_j \le z_j \le 2\lambda^{-1} + w_j, ~ 1 \le j \le d \},\\
    R_2 &= \{z: w_j \le z_j \le 2\lambda^{-1} - w_j, ~ 1 \le j \le d \}.
\end{align*}
For the integral over $R_1$, 
\begin{align*}
    &\int_{ R_1 }
    \exp \inparen{\frac{q\lambda^{2-\alpha}}{4} \normn{z+w}^2 } 
    \exp \inparen{\frac{q\lambda^{2-\alpha}}{4} \normn{z-w}^2 }
     dz \\
     &\ge 
     \int_{ R_1 }
    \exp \inparen{\frac{q\lambda^{2-\alpha}}{2} \normn{z+w}^2 }
     dz \\
     &=
     \int_{ R_3 }
    \exp \inparen{\frac{q\lambda^{2-\alpha}}{2} \normn{u}^2 }
     du\\
     &=
     \prod_{j =1}^d
     \int_{ 0}^{2(\lambda^{-1} + w_j) }
    \exp \inparen{\frac{q\lambda^{2-\alpha}}{2} u_j^2 } 
     du_j,\\
\end{align*}
where the inequality holds by the fact that $w \in [-\lambda^{-1},0]^d$,
and the second line holds by making the substitution $u=z+w$ and defining $R_3= \{u: 0 \le u_j \le 2(\lambda^{-1} + w_j), ~ 1 \le j \le d \}$. Now, letting $v_j^2 := \frac{q\lambda^{2-\alpha}}{2} u_j^2$ gives 
\begin{align*}
    \prod_{j =1}^d
   &  \int_{ 0}^{2(\lambda^{-1} + w_j) }
    \exp \inparen{\frac{q\lambda^{2-\alpha}}{2} u_j^2 } 
     du_j
     =
     \prod_{j =1}^d
     \sqrt{\frac{2}{q\lambda^{2-\alpha}}}
     \int_{ 0}^{2(\lambda^{-1} + w_j) \sqrt{\frac{q\lambda^{2-\alpha}}{2}  }}
    e^{ v_j^2 } 
     dv_j\\
     &=
     \prod_{j =1}^d
     \sqrt{\frac{2}{q\lambda^{2-\alpha}}}
     \exp \inparen{
     4(\lambda^{-1} + w_j)^2 \frac{q\lambda^{2-\alpha}}{2}
     }
     \mcD \inparen{ 2(\lambda^{-1} + w_j) \sqrt{\frac{q\lambda^{2-\alpha}}{2}  } }\\
     &\asymp
     \prod_{j =1}^d
     \sqrt{\frac{2}{q\lambda^{2-\alpha}}}
     \exp \inparen{
     4(\lambda^{-1} + w_j)^2 \frac{q\lambda^{2-\alpha}}{2}
     }
     \frac{1}{ (\lambda^{-1} + w_j) \sqrt{\frac{q\lambda^{2-\alpha}}{2}  } }\\
     &\asymp
     \prod_{j=1}^d
     \exp \inparen{
     2 q (1 + \lambda w_j)^2 /\lambda^{\alpha}
     }
     \frac{1}{ q(\lambda^{-1} + w_j) \lambda^{2-\alpha}  }\\
     &=
     q^{-d} \lambda^{d(\alpha-1)} \prod_{j=1}^d \exp \inparen{
     2 q (1 + \lambda w_j)^2 /\lambda^{\alpha}
     } \frac{1}{ (1 + \lambda w_j)  },
\end{align*}
where we have used the same properties of the Dawson function as in the upper bound. Therefore, we have so far shown that 
\begin{align*}
    &\int_{ R_1 }
    \exp \inparen{\frac{q\lambda^{2-\alpha}}{4} \normn{z+w}^2 } 
    \exp \inparen{\frac{q\lambda^{2-\alpha}}{4} \normn{z-w}^2 }
     dz \\
     & \hspace{4cm}
     \gtrsim 
     q^{-d}
    \lambda^{d(\alpha-1)} \prod_{j=1}^d \exp \inparen{
     2 q (1 + \lambda w_j)^2 /\lambda^{\alpha}
     } \frac{1}{ (1 + \lambda w_j)  }.
\end{align*}
For the integral over $R_2$ a similar argument shows that 
\begin{align*}
    &\int_{ R_2 }
    \exp \inparen{\frac{q\lambda^{2-\alpha}}{4} \normn{z+w}^2 } 
    \exp \inparen{\frac{q\lambda^{2-\alpha}}{4} \normn{z-w}^2 }
     dz\\ 
     & \hspace{4cm}
     \gtrsim 
     q^{-d}
    \lambda^{d(\alpha-1)} \prod_{j=1}^d \exp \inparen{
     2  q (1 - \lambda w_j)^2 /\lambda^{\alpha}
     } \frac{1}{ (1 - \lambda w_j)  }.
\end{align*}

Putting the two bounds together, we have that
\begin{align*}
    &\sup_{x\in D} \int_D |k_\lambda(x,y)|^q dy \\
    &\ge 
    q^{-d}\lambda^{2d}  
    \int_{[-\lambda^{-1},\lambda^{-1}]^d} 
    \lambda^{d(\alpha-1)} 
    \prod_{j=1}^d \exp \inparen{
     2 q (1 - \lambda |w_j|)^2 /\lambda^{\alpha}
     } \frac{1}{ (1 - \lambda |w_j|)  }
     |\tildek_1(\|w\|)|^q
     dw
     \\
     &\asymp 
     q^{-d}
      \lambda^{d(1+\alpha)}  
    \int_{\R^d} 
    e^{2 d q /\lambda^{\alpha}} |\tildek_1(\|w\|)|^q dw\\
    &=
    q^{-d}
     \lambda^{d(1 + \alpha)} e^{2qd/\lambda^\alpha}   A(d) \int_0^\infty r^{d-1} \tildek_1(r) dr,
\end{align*}
where the second line holds by the Dominated Convergence Theorem as $\lambda \to 0^+$. The final line follows as in the proof of the upper bound. 

\end{proof}

\begin{lemma}\label{lem:SparsityParameterCharacterization} 
The kernel $k_\lambda$ satisfies Assumption \ref{ass:mainAssumption} with
    \begin{align*}
        R_q^q
        \asymp 
        q^{-d}
        \lambda^{d(\alpha + 1)} 
    e^{2 d \lambda^{-\alpha}}
     \int_0^\infty r^{d-1} \tildek_1(r)^q dr,
    \qquad \lambda \to 0^+.
    \end{align*}
\end{lemma}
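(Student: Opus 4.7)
The plan is to reduce the sparsity integral defining $R_q^q$ to a form that can be handled by the very same machinery developed in Lemma~\ref{lem:supklambda_positiveexponential}, only with an altered exponent on the marginal-variance weight $\sigma_\lambda$.

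First, I would unpack the definition of $R_q^q$ from Assumption~\ref{ass:mainAssumption}(i). Substituting $k_\lambda(x,x) = \tildek(0)\sigma_\lambda^2(x)$ and $k_\lambda(x,y) = \sigma_\lambda(x)\sigma_\lambda(y)\tildek_\lambda(\|x-y\|)$, the weight factor and the $q$th power of the kernel combine as
\begin{align*}
    \bigl(k_\lambda(x,x)k_\lambda(y,y)\bigr)^{(1-q)/2}|k_\lambda(x,y)|^q = \tildek(0)^{1-q}\sigma_\lambda(x)\sigma_\lambda(y)\tildek_\lambda(\|x-y\|)^q,
\end{align*}
so that the $q$-exponents on the $\sigma_\lambda$ factors cancel cleanly, and
\begin{align*}
    R_q^q = \tildek(0)^{1-q}\sup_{x \in D}\int_D \sigma_\lambda(x)\sigma_\lambda(y)\tildek_\lambda(\|x-y\|)^q\, dy.
\end{align*}
The key observation is that, unlike the weight $\sigma_\lambda^q\sigma_\lambda^q$ appearing in Lemma~\ref{lem:supklambda_positiveexponential}, the weight here is $q$-independent.

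Next, I would follow step-by-step the upper- and lower-bound arguments from the proof of Lemma~\ref{lem:supklambda_positiveexponential}, with the single modification that $\sigma_\lambda(x)^q\sigma_\lambda(y)^q$ is everywhere replaced by $\sigma_\lambda(x)\sigma_\lambda(y)$. For the upper bound, I would factor $\sup_x \sigma_\lambda(x) = e^{d/\lambda^\alpha}$ out of the supremum, rescale $y\mapsto \lambda y'$ to convert $\tildek_\lambda$ to $\tildek_1$ (producing a Jacobian $\lambda^d$), and split the resulting integral via the covariance inequality \cite[Theorem 4.7.9]{berger2001statistical} into a product of a Gaussian-type tail integral $\int_{[0,\lambda^{-1}]^d}e^{\lambda^{2-\alpha}\|y'\|^2}dy'$ and a radial integral $\int_{[0,\lambda^{-1}]^d}\tildek_1(\|y'\|)^q dy'$. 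The former is evaluated via a Dawson-function substitution using the asymptotic $\mcD(x)\asymp x^{-1}$ as $x\to\infty$, and the latter is bounded above by $A(d)\int_0^\infty r^{d-1}\tildek_1(r)^q dr$ after extending the domain to $\R^d$ and passing to polar coordinates. For the lower bound, I would drop the supremum by integrating over $x$, apply the change of variables $w=x'-y'$, $z=x'+y'$ following the template, extract the $z$-integral via the same Dawson asymptotic, and then invoke Dominated Convergence to conclude that the $w$-integral converges to $\int_{\R^d}\tildek_1(\|w\|)^q dw \asymp A(d)\int_0^\infty r^{d-1}\tildek_1(r)^q dr$ as $\lambda\to 0^+$.

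The main obstacle I anticipate is careful bookkeeping of the $q$- and $\lambda$-dependence at each intermediate step. Because the weights carry exponent $1$ rather than $q$, the Dawson-function argument becomes $\lambda^{-\alpha/2}$ rather than $\sqrt{q}\,\lambda^{-\alpha/2}$, and the various substitutions produce different pre-factors of $q$ and $\lambda$ than those in Lemma~\ref{lem:supklambda_positiveexponential}. I would need to verify at each step that the Dawson asymptotic remains valid in the correct regime (which holds since $\lambda^{-\alpha/2}\to\infty$) and carefully track how the pre-factors coming from the Dawson substitutions and from the scaling of the radial tail integral $\int_0^\infty r^{d-1}\tildek_1(r)^q dr$ combine to yield the claimed $q^{-d}\lambda^{d(1+\alpha)}e^{2d/\lambda^{\alpha}}\int_0^\infty r^{d-1}\tildek_1(r)^q dr$. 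The unweighted case $\sigma_\lambda\equiv 1$ is covered by the same outline with the exponential factors trivially equal to $1$, recovering the standard small-lengthscale scaling.
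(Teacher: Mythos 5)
Your proposal is correct and essentially identical to the paper's proof, which likewise reduces the weighted integrand to $\tildek(0)^{1-q}\,\sigma_\lambda(x)\sigma_\lambda(y)\,|\tildek_\lambda(x,y)|^q$ (the $q$-exponents on $\sigma_\lambda$ cancel exactly as you note) and then repeats the argument of Lemma~\ref{lem:supklambda_positiveexponential} verbatim with the weight exponent $q$ replaced by $1$. One caution on the bookkeeping you flag as the main obstacle: with the $q$-free weight the Gaussian-type integral is $\asymp\lambda^{d(\alpha-1)}e^{d/\lambda^{\alpha}}$ with no $q^{-d}$ factor (the Dawson argument is $\lambda^{-\alpha/2}$ rather than $\sqrt{q}\,\lambda^{-\alpha/2}$), and the same happens in the lower bound, so the computation naturally delivers $R_q^q\asymp\lambda^{d(\alpha+1)}e^{2d/\lambda^{\alpha}}\int_0^\infty r^{d-1}\tildek_1(r)^q\,dr$; the prefactor $q^{-d}$ appearing in the statement does not emerge from this argument (it is inherited from the $q$-weighted case of Lemma~\ref{lem:supklambda_positiveexponential}) and is compatible with your bound only up to $q$-dependent constants, which is harmless here since $q$ is fixed in the $\lambda\to0^+$ asymptotic.
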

\begin{proof}
The result follows by noting that 
\begin{align*}
    \sup_{x\in D} \int_D (k_\lambda(x,x)k_\lambda(y,y))^{(1-q)/2}|k_\lambda(x,y)|^q dy = 
    \tildek(0)
    \sup_{x\in D} \int_D  \sigma_\lambda(x) \sigma_\lambda(y) |\tildek_\lambda(x,y)|^q dy,
\end{align*}
and using an identical approach to the one in Lemma~\ref{lem:supklambda_positiveexponential}.
\end{proof}

\begin{lemma}\label{lem:CovOpBound}
It holds that
    \begin{align*}
        \normn{C}
        \asymp 
        \lambda^{d(\alpha + 1)} 
    e^{2 d \lambda^{-\alpha}}
     \int_0^\infty r^{d-1} \tildek_1(r) dr,
    \qquad \lambda \to 0^+.
    \end{align*}
\end{lemma}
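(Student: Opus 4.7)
The goal is a two-sided bound on $\|C\|$ that matches up to universal constants as $\lambda\to 0^+$. My plan is to extract the upper bound from Lemma~\ref{lem:supklambda_positiveexponential} via Schur's test, and to match it from below with a simple test function combined with the computation already carried out for Lemma~\ref{lem:TraceBound}.

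\textbf{Upper bound.} Since $k_\lambda(x,y)=\sigma_\lambda(x)\sigma_\lambda(y)\tildek_\lambda(\|x-y\|)\ge 0$ and the kernel is symmetric, Schur's test for integral operators on $L_2(D)$ yields
\[
\|C\|\ \le\ \sup_{x\in D}\int_D k_\lambda(x,y)\,dy.
\]
Specializing Lemma~\ref{lem:supklambda_positiveexponential} to $q=1$ (so $q^{-d}=1$) gives the right-hand side $\asymp \lambda^{d(\alpha+1)}e^{2d\lambda^{-\alpha}}\int_0^\infty r^{d-1}\tildek_1(r)\,dr$, which is the claimed upper bound.

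\textbf{Lower bound.} Testing $\|C\|\ge \langle C\psi,\psi\rangle/\|\psi\|_{L_2}^2$ against the constant $\psi\equiv 1$ on $D$ gives $\|\psi\|_{L_2(D)}^2=1$ and
\[
\|C\|\ \ge\ \int_D\int_D \sigma_\lambda(x)\sigma_\lambda(y)\tildek_\lambda(\|x-y\|)\,dx\,dy.
\]
For the inner integral $\int_D \sigma_\lambda(y)\tildek_\lambda(\|x-y\|)\,dy$, the kernel $\tildek_\lambda$ is concentrated on a ball of radius $O(\lambda)$ around $x$. On that ball $\sigma_\lambda(y)/\sigma_\lambda(x) = \exp\bigl((\|y\|^2-\|x\|^2)/\lambda^\alpha\bigr)=e^{O(\lambda^{1-\alpha})}\asymp 1$ since $\alpha<1$, so after the change of variables $z=(y-x)/\lambda$ and letting $\lambda\to 0^+$,
\[
\int_D \sigma_\lambda(y)\tildek_\lambda(\|x-y\|)\,dy\ \asymp\ \sigma_\lambda(x)\,\lambda^d A(d)\!\int_0^\infty r^{d-1}\tildek_1(r)\,dr,
\]
for every $x$ lying in the bulk of $D$, where $A(d)$ is the surface area of the unit sphere. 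Multiplying by $\sigma_\lambda(x)$ and integrating over $x$ produces
\[
\int_D\int_D k_\lambda(x,y)\,dx\,dy\ \asymp\ \lambda^d\!\int_0^\infty r^{d-1}\tildek_1(r)\,dr\cdot\int_D \sigma_\lambda(x)^2\,dx,
\]
and the very same computation performed in the proof of Lemma~\ref{lem:TraceBound} gives $\int_D\sigma_\lambda(x)^2\,dx\asymp \lambda^{\alpha d}e^{2d\lambda^{-\alpha}}$. Combining yields the lower bound $\asymp\lambda^{d(\alpha+1)}e^{2d\lambda^{-\alpha}}\int_0^\infty r^{d-1}\tildek_1(r)\,dr$, which matches the upper bound up to universal constants.

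\textbf{Main obstacle.} The delicate step is the bulk/boundary bookkeeping in the lower bound: $\sigma_\lambda^2$ concentrates on a $\lambda^\alpha$-neighborhood of the corners of $D=[0,1]^d$, which is precisely where the ball of radius $\lambda$ around $x$ may fail to lie entirely inside $D$, so the approximation $\int_D\tildek_\lambda(\|x-y\|)\,dy\asymp \lambda^d A(d)\int_0^\infty r^{d-1}\tildek_1(r)\,dr$ picks up a geometric factor strictly less than one at the very corner. To handle this I would restrict the outer integral to the union of the $2^d$ slabs $\{x\in D:\mathrm{dist}(x,\partial D)\ge C\lambda,\ \sigma_\lambda(x)\gtrsim e^{d/\lambda^\alpha}\}$, on which (i) the inner convolution is comparable to the full $\lambda^d\tau$ and (ii) $\sigma_\lambda^2$ is already of order $e^{2d/\lambda^\alpha}$ and whose volume is $\asymp \lambda^{\alpha d}$ (since $\lambda\ll\lambda^\alpha$ as $\alpha<1/2$). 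This restriction loses only a bounded constant in both the inner integral and in $\int \sigma_\lambda^2$, so the final matching lower bound is preserved.
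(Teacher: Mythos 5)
Your proof is correct and takes essentially the same route as the paper: the upper bound is Schur's test together with the $q=1$ case of Lemma~\ref{lem:supklambda_positiveexponential}, and your lower bound via $\psi\equiv 1$ reduces to the double-integral bound $\iint_{D\times D}k_\lambda\,dx\,dy\gtrsim \lambda^{d(1+\alpha)}e^{2d/\lambda^{\alpha}}\int_0^\infty r^{d-1}\tildek_1(r)\,dr$, which is exactly what the lower-bound half of the proof of Lemma~\ref{lem:supklambda_positiveexponential} already establishes (the paper itself simply invokes the analogue of this argument from \cite{al2023covariance} with Lemma~\ref{lem:supklambda_positiveexponential} substituted). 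One small correction to your ``obstacle'' paragraph: since $\sigma_\lambda(x)=\exp(\lambda^{-\alpha}\|x\|^2)$ is maximized at the single corner $(1,\dots,1)$ of $D=[0,1]^d$, the mass of $\sigma_\lambda^2$ concentrates in one corner region of volume $\asymp\lambda^{\alpha d}$ rather than in $2^d$ slabs, but restricting the outer integral to that single region kept at distance $\ge C\lambda$ from $\partial D$ (possible because $\lambda\ll\lambda^{\alpha}$) loses only a constant factor, so your argument goes through unchanged.
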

\begin{proof}
    The proof follows in an identical way to that of \cite[Lemma 4.2]{al2023covariance}, but invokes our novel characterization in  Lemma~\ref{lem:supklambda_positiveexponential} instead of \cite[Lemma 4.1]{al2023covariance}.
\end{proof}

\begin{proof}[Proof of Theorem \ref{thm:CompareAdaptiveAndSampleCov}]
    For the sample covariance, note that by \cite[Theorem 9]{koltchinskii2017concentration}, for any $t \ge 1$ it holds with probability at least $1-e^{-t}$ that 
    \begin{align*}
        \frac{\normn{\hatC- C}}{\normn{C}} 
        \asymp 
        \sqrt{\frac{r_2(C)}{N}} \lor \frac{r_2(C)}{N} \lor \sqrt{\frac{t}{N}} \lor \frac{t}{N}, 
        \qquad r_2(C) := \frac{\ttrace(C)}{\normn{C}}.
    \end{align*}
    By Lemmas~\ref{lem:TraceBound} and \ref{lem:CovOpBound}, we have that for sufficiently small $\lambda$, 
    \begin{align*}
        r_2(C) \asymp
        \frac{\tildek_1(0) \lambda^{\alpha d} 
        e^{2d/\lambda^\alpha}}{ \lambda^{d(\alpha+1)} e^{2d/\lambda^\alpha}}
        \asymp 
        \lambda^{-d}.
    \end{align*}
     Therefore, with probability at least $1-e^{-\log(\lambda^{-d})} = 1-\lambda^d$ 
    \begin{align*}
        \frac{\normn{\hatC- C}}{\normn{C}} 
        &\asymp 
        \sqrt{\frac{\lambda^{-d}}{N}} \lor \frac{\lambda^{-d}}{N} 
        \lor \sqrt{\frac{\log(\lambda^{-d})}{N}} \lor \frac{\log(\lambda^{-d})}{N}
         =
        \sqrt{\frac{\lambda^{-d}}{N}} \lor \frac{\lambda^{-d}}{N}.
    \end{align*}
    For the adaptive estimator, note first that the normalized process $\tildeu(x) = u(x)/k^{1/2}(x,x)$ is isotropic with covariance function $\tildek_\lambda(\|x-y\|).$ Then by \cite[Lemma 4.3]{al2023covariance}, we have that $\E[\sup_{x\in D} \tildeu(x)] \asymp \sqrt{\log(\lambda^{-d})}$ for sufficiently small $\lambda.$ By Theorem~\ref{thm:ThresholdOpNormBoundwSampleRho}, Lemma~\ref{lem:CovOpBound}, and Lemma~\ref{lem:SparsityParameterCharacterization}, we then have with probability at least $1-e^{-\log(\lambda^{-d})} = 1-\lambda^d$
    \begin{align*}
        \normn{\hatC_{\hat{\rho}_N}- C} 
        &\lesssim 
        R_q^q \inparen{\frac{\log(\lambda^{-d})}{{\sqrt{N}}} }^{1-q}\\
        &\asymp 
        q^{-d}
        \lambda^{d(\alpha + 1)} 
    e^{2 d \lambda^{-\alpha}}
     \int_0^\infty r^{d-1} \tildek_1(r)^q dr
        \inparen{\frac{\log(\lambda^{-d})}{{\sqrt{N}}} }^{1-q}\\
        &\asymp 
        q^{-d} \|C\|
     \frac{\int_0^\infty r^{d-1} \tildek_1(r)^q dr}{\int_0^\infty r^{d-1} \tildek_1(r) dr}
        \inparen{\frac{\log(\lambda^{-d})}{{\sqrt{N}}} }^{1-q}, \qquad \lambda\to 0^+.
    \end{align*}
    Rearranging yields the result with $c(q) := q^{-d} \frac{\int_0^\infty r^{d-1} \tildek_1(r)^q dr}{\int_0^\infty r^{d-1} \tildek_1(r) dr}.$
\end{proof}

\section{Conclusions and Future Work}\label{sec:Conclusions}
In this paper, we have studied covariance operator estimation under a novel sparsity assumption, developing a new nonasymptotic and dimension-free theory. Our model assumptions capture a particularly challenging class of nonstationary covariance models, where the marginal variance may vary significantly over the spatial domain. Adaptive threshold estimators are then shown to perform well over this class from both a theoretical and experimental perspective. The theory developed in this work as well as the connections made to both the (finite) high-dimensional literature and the functional data analysis literature open the door to many interesting avenues for future work, which we outline in the following:
\begin{itemize}
    \item \textit{Inference for covariance operators}: The techniques developed in this work, in particular the dimension-free bounds for the sample covariance and variance component functions open the door to lifting the finite high-dimensional inference theory to the infinite dimensional setting. The review paper \cite{cai2017global} provides a detailed overview of the covariance inference literature in finite dimensions. Our analysis can likely be used to extend this theory to the covariance operator setting. To the best of our knowledge, the existing literature on inference for covariance operators (see for example \cite{panaretos2010second, kashlak2019inference}) does not account for potential sparse structure in the underlying operators, though such structure can naturally be assumed in many cases of interest.

    \item \textit{Estimating the covariance operator of a multi-valued Gaussian process}: As described in Section~\ref{sec:LitReviewInfiniteDims}, the paper \cite{fang2023adaptive} considers covariance estimation for multi-valued Gaussian processes under a sparsity assumption on the dependence between the individual component of the process. In contrast to our approach (see also Remark~\ref{rem:globalSparsityAssumption}) their assumption does not impose any sparsity on each component  covariance function. It would be interesting therefore to extend our analysis to the multi-valued functional data analysis setting where each component satisfies a local-type sparsity constraint, such as belonging to $\mcK^*_q$.
\end{itemize}

\section*{Acknowledgments}
The authors are grateful for the support of the NSF CAREER award DMS-2237628, DOE DE-SC0022232, and the BBVA Foundation. The authors are also thankful to Jiaheng Chen and Subhodh Kotekal for inspiring discussions.

\begin{appendix}
\section{Additional Numerical Simulations}\label{app:Dim2Simulations}
This appendix shows random draws and results of operator estimation in $d=2$. The experimental set-up is identical to the $d=1$ setting described in Section~\ref{sec:empiricalnew}, however due to computational constraints in the higher dimensional setting, we use different settings for the experimental parameters. Specifically, our samples are generated by discretizing the domain $D=[0,1]^2$ with a uniform mesh of $L=10,000$ points. We consider a total of 10 choices of $\lambda$ arranged uniformly in log-space and ranging from $10^{-2}$ to $10^{-0.1}$. As in the $d=1$ case, the plots indicate that adaptive thresholding is a significant improvement over both the sample covariance and universal thresholding estimators in both the unweighted and weighted settings.

\begin{figure}
    \centering
\includegraphics[width=0.85\textwidth]{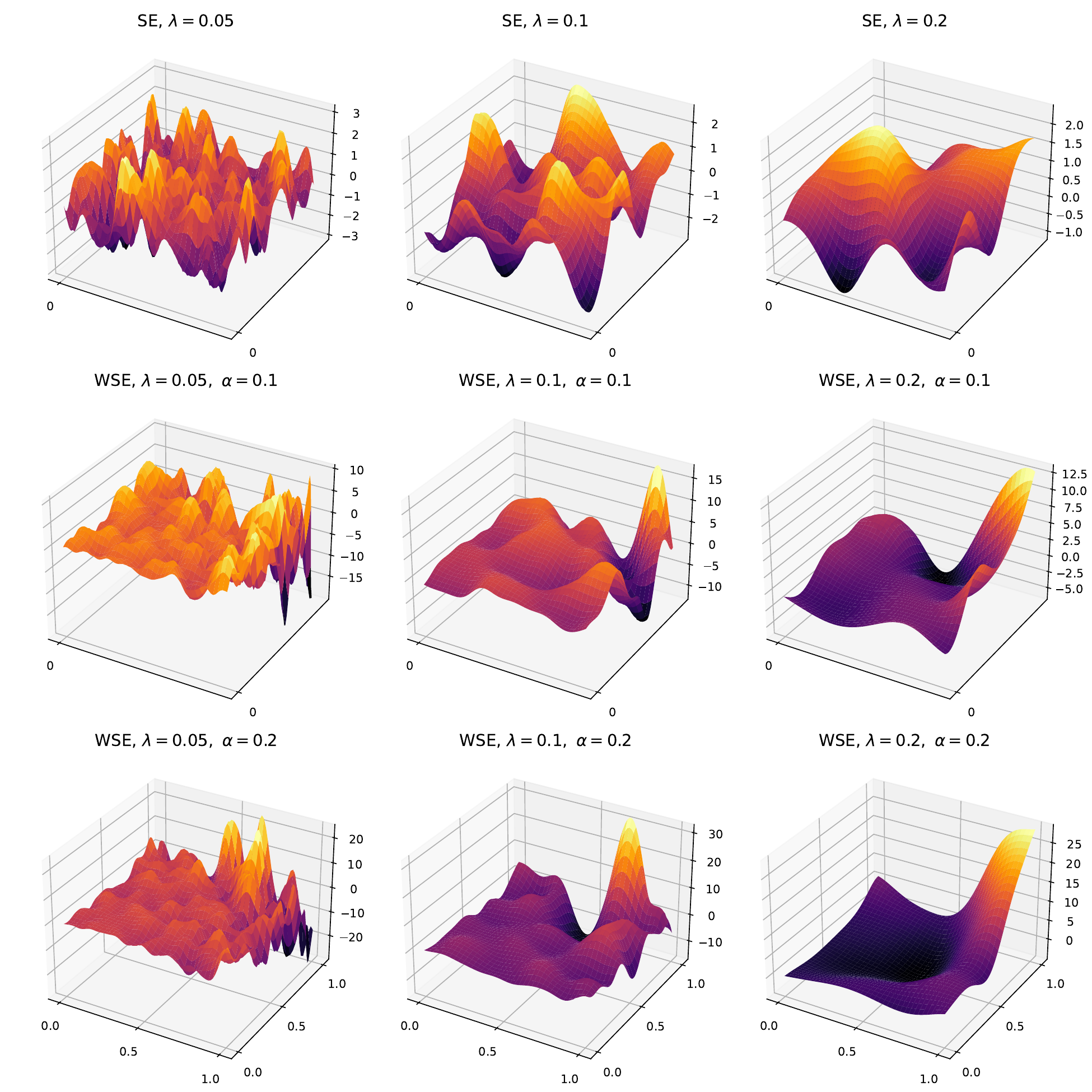}
    \caption{Draws from a centered Gaussian process on $D=[0,1]^2$ with covariance function SE in the first row, WSE($\alpha=0.1$) in the second and WSE($\alpha=0.2$) in the third, with varying $\lambda$ parameter.
    %Reference: this is generated using KernelPlots.ipynb
    }
    \label{fig:varyingAlphad2}
\end{figure}

\begin{figure}
    \centering
\includegraphics[scale=0.5]{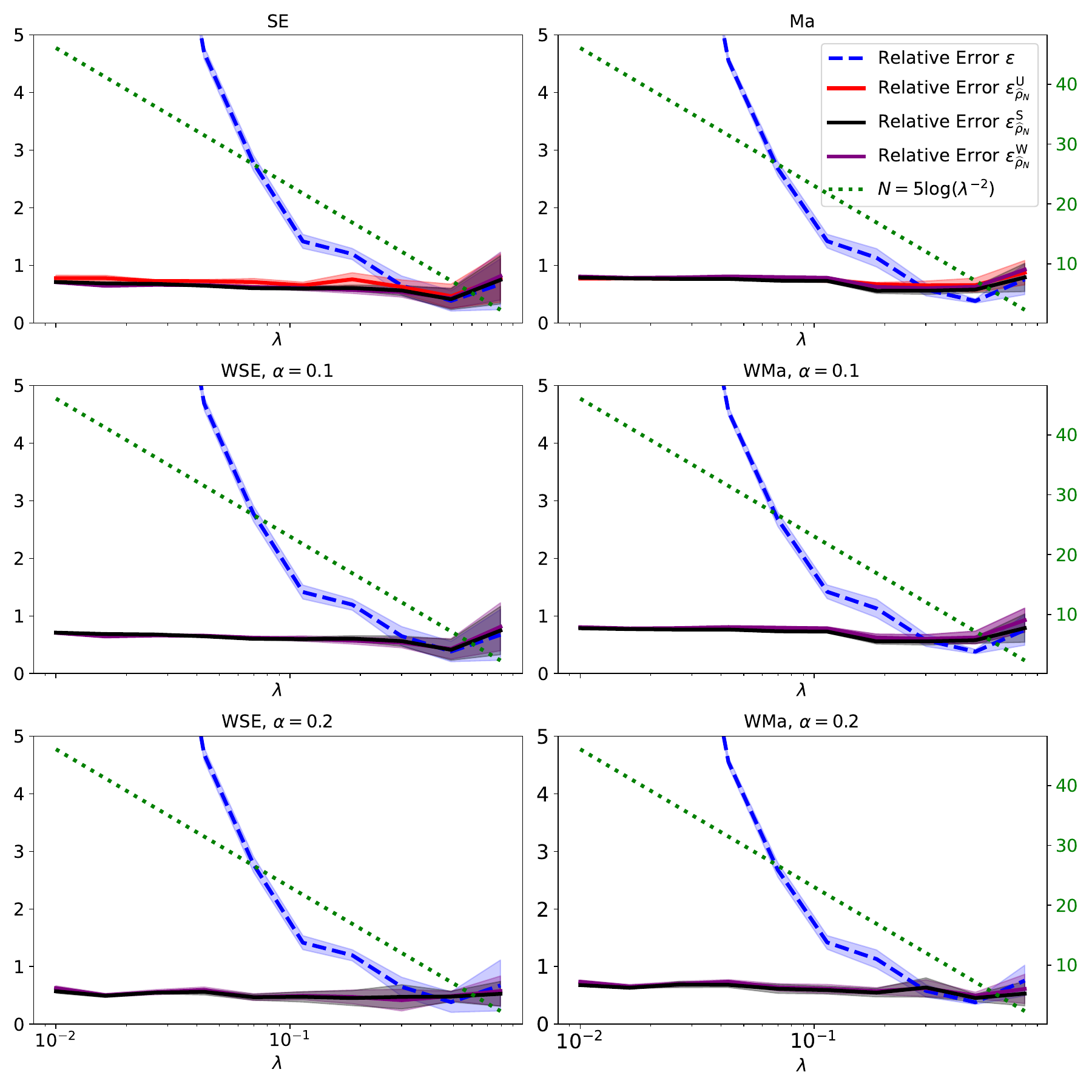}
    \caption{Plots of the average relative errors and 95\% confidence intervals achieved by the sample ($\varepsilon$, dashed blue), universal thresholding ($\varepsilon^{\mathsf{U}}_{\hat{\rho}_N}$, red), sample-based adaptive thresholding ($\varepsilon^{\mathsf{S}}_{\hat{\rho}_N}$, black) and Wick's adaptive thresholding ($\varepsilon^{\mathsf{W}}_{\hat{\rho}_N}$, purple) covariance estimators based on a sample size ($N$, dotted green) for the (weighted) squared exponential (left) and (weighted) Matérn (right) covariance functions in $d=2$ over 10 Monte-Carlo trials and 10 scale parameters $\lambda$ ranging from $10^{-2}$ to $10^{-0.1}$. The first row corresponds to the unweighted covariance functions and is the only case in which the universal thresholding estimator is considered; the second and third rows correspond to the weighted variants with $\alpha=0.1, 0.2$ respectively.}
    \label{fig:d2Results}
    % plot generated using AdaptiveThresholding_d=2.ipynb and local notebook in d=2 folder.
\end{figure}

\section{Sub-Gaussian Process Calculations}\label{app:subgCalcs}
Let $v^{(1)}, v^{(2)}$ denote independent centered Gaussian processes both with covariance function $k^v$ of the form \eqref{eq:nonstationaryKernels}, such that $k^v(x,y) = \sigma_\lambda(x)\sigma_\lambda(y)\tildek_\lambda(x,y).$ Consider the process $u$ obtained by transforming $v^{(1)}, v^{(2)}$ and let $m^u, k^u$ denote the mean and covariance functions of $u,$ respectively. In the following sections, we provide explicit expressions for $m^u, k^u$ for various transformations. These results are utilized in the sub-Gaussian portion of Section~\ref{ssec:SimulationResults}.

\subsection{Sine Function}\label{ssec:Sine}
  Let $u := \sin(v^{(1)})$. As $v^{(1)}$ is centered, and $\sin(\cdot)$ is an odd function, $\E[\sin(v^{(1)}(x))]=0$ for any $x \in D$, implying $m^u =0.$ Further, since $\sin(\cdot)$ is bounded, $u$ is a sub-Gaussian process.  Note then that
\begin{align*}
    k^u(x,y)
    &= \E[\sin(v^{(1)}(x))\sin(v^{(1)}(y))]\\
    &= \frac{1}{2} \E[\cos(v^{(1)}(x)-v^{(1)}(y))]
    -
    \frac{1}{2} \E[\cos(v^{(1)}(x)+v^{(1)}(y))],
\end{align*}
where we have made use of the identity $\sin(a)\sin(b) = \frac{1}{2}(\cos(a-b) - \cos(a+b)).$ Recall that $v^{(1)}(x)-v^{(1)}(y) \sim N(0,k^v(x,x)+k^v(y,y) - 2k^v(x,y))$ and $v^{(1)}(x)+v^{(1)}(y) \sim N(0,k^v(x,x)+k^v(y,y) + 2k^v(x,y))$. Further note that for $Z \sim N(0, \tau^2),$ $\E[\cos(Z)] = e^{-\tau^2/2}.$ Therefore, 
\begin{align*}
     k^u(x,y)
     &= \frac{1}{2} e^{-\frac{1}{2}(k^v(x,x)+k^v(y,y) - 2k^v(x,y)))}
    -
    \frac{1}{2} e^{-\frac{1}{2}(k^v(x,x)+k^v(y,y) + 2k^v(x,y)))}\\
    &=
    e^{-\frac{1}{2}(k^v(x,x)+k^v(y,y))} \sinh(k^v(x,y))\\
    &= e^{-\frac{1}{2}(\sigma_\lambda^2(x)+\sigma_\lambda^2(y))} \sinh(\sigma_\lambda(x) \sigma_\lambda(y) \tildek^v_\lambda(x,y)).
\end{align*}

\subsection{Absolute Value Function}\label{ssec:absval}
Let $u := |v^{(1)}|$. Since $|\cdot|$ is Lipschitz, $u-m^u$ is a sub-Gaussian process. Direct calculation yields $m^u(x) = \E|v^{(1)}(x)|= \sqrt{\frac{2}{\pi} k^v(x,x)} = \sigma_\lambda(x) \sqrt{\frac{2}{\pi}}$ for any $x \in D.$ Recall that for any $x,y \in D$, $(v^{(1)}(x),v^{(1)}(y))$ is a centered bi-variate Gaussian vector with $\E [v^{(1)}(x)v^{(1)}(y)] = \sigma_\lambda(x) \sigma_\lambda(y) \tildek_\lambda(x,y),$ by {\cite[Corollary 3.1]{li2009gaussian}} 
\begin{align*}
    \E [|v^{(1)}(x)||v^{(1)}(y)|]
    = \frac{2 \sigma_\lambda(x)\sigma_\lambda(y)}{\pi} 
    \inparen{ \sqrt{1-\tildek_\lambda^2(x,y)}
    + \tildek_\lambda(x,y) \sin^{-1} (\tildek_\lambda(x,y))
    }.
\end{align*}
Therefore 
\begin{align*}
    k^u(x,y) 
    &=
    \frac{2 \sigma_\lambda(x)\sigma_\lambda(y)}{\pi}
    \inparen{
    \sqrt{1-\tildek_\lambda^2(x,y)}
    + \tildek_\lambda(x,y) \sin^{-1} (\tildek_\lambda(x,y))
    -1}.
\end{align*}

\subsection{Absolute Value $\times$ Sine Function}\label{ssec:absvaluetimessine}
Let $u:=(|v^{(1)}|-\E|v^{(1)}|) \sin(v^{(2)}).$ By \ref{ssec:absval} and the fact that $\sin(v^{(2)})$ is bounded, $u$ is the product of a sub-Gaussian process and a bounded process, and so is itself sub-Gaussian.

Next, by independence of $v^{(1)}$ and $v^{(2)}$, $m^u =0.$ Recall that the product of two independent stochastic processes with covariance functions $k^1,k^2$ has covariance function $k^1 k^2$. Therefore, by the derivations in \ref{ssec:Sine} and \ref{ssec:absval}, for any $x,y \in D,$
\begin{align*}
     k^u(x,y) 
     &= \frac{2 \sigma_\lambda(x)\sigma_\lambda(y)}{\pi}
    \inparen{
    \sqrt{1-\tildek_\lambda^2(x,y)}
    + \tildek_\lambda(x,y) \sin^{-1} (\tildek_\lambda(x,y))}
    \\&
    \times 
     e^{-\frac{1}{2}(\sigma_\lambda^2(x)+\sigma_\lambda^2(y))} \sinh(\sigma_\lambda(x) \sigma_\lambda(y) \tildek^v_\lambda(x,y)).
\end{align*}

\end{appendix}

\bibliographystyle{abbrvnat}  
\bibliography{references}

\end{document}